\documentclass{article}
\usepackage[utf8]{inputenc}
\usepackage{authblk}
\usepackage{indentfirst}
\usepackage{setspace}
\usepackage{enumerate} 
\usepackage{amsmath,amssymb,amsthm,enumerate,graphicx}
\usepackage[all]{xy}
\usepackage{indentfirst}
\usepackage{graphics}
\usepackage{pict2e}
\usepackage{epic}
\usepackage{natbib}
\usepackage{amssymb,latexsym,amsmath}
\usepackage{amsthm}
\usepackage{dsfont}
\usepackage{amsfonts}
\usepackage{mathrsfs}

\usepackage[english]{babel}
\usepackage{array}

\numberwithin{equation}{section}
\usepackage{epstopdf} 
\usepackage{tikz-cd}
\usepackage{tikz}
\usepackage{calligra,mathrsfs}
\usepackage[hidelinks]{hyperref}
\usepackage{multirow}
\usepackage{lipsum}
\usepackage{array}
\usepackage[mathscr]{eucal}
\usepackage{titlesec}
\usetikzlibrary{shapes,arrows}
\usetikzlibrary{decorations.markings}
\usetikzlibrary {patterns,patterns.meta}
\usepackage{mathtools}
\usepackage{cases, caption}
\usepackage{mathalfa}
\usepackage{amscd}
\usepackage[verbose]{placeins}
\usepackage{graphicx}
\graphicspath{ {/admin/desktop/} }
\usetikzlibrary{decorations.pathreplacing,decorations.markings}
\usetikzlibrary{shapes,shapes.arrows,positioning}
\allowdisplaybreaks
\tikzset{->-/.style={decoration={markings, mark=at position #1 with {\arrow{>}}},postaction={decorate}}}

\newcommand{\GL}{\text{GL}}

\newcommand{\R}{\mathbb{R}}

\newcommand{\mg}{M_{\Gamma}}

\newcommand{\Z}{\mathbb{Z}}

\newcommand{\tsh}{\mathfrak{T}}

\DeclareMathOperator{\diff}{\text{Diff}^{+}}

\DeclareMathOperator{\ts}{Twist}
\DeclareMathOperator{\Out}{Out}
\DeclareMathOperator{\map}{Map}
\DeclareMathOperator{\pmap}{PMap}
\DeclareMathOperator{\supp}{supp}
\DeclareMathOperator{\Hom}{Hom}
\DeclareMathOperator{\Fr}{Fr}
\DeclareMathOperator{\Homeo}{Homeo}
\newcommand{\GLR}{\text{GL}^{+}(3,\R)}
\usetikzlibrary {patterns,patterns.meta}
\newtheorem{colo}{Corollary}[section]

\newtheorem{teorema}{Theorem}[section]
\newtheorem{lema}{Lemma}[section]
\newtheorem{prop}{Proposition}[section]

\theoremstyle{remark}
\newtheorem{remark}{Remark}[section]

\title{An explicit section of the Laudenbach-type exact sequence of the big mapping class group of $M_{\Gamma}$}
\author{Jorge Andres Robinson Arrieta\\
  \small Keiser University Flagship Residential Campus\\
  \small West Palm Beach, Florida, United States\\
  \small\texttt{jorge.robinson@keiseruniversity.edu}}
\date{October 2026}

\begin{document}

\maketitle

\begin{abstract}\vspace{0.45cm}
Given a locally finite graph $\Gamma$, Udall proved that the mapping class group of the doubled handlebody  associated to $\Gamma$ is an extension of the mapping class group of $\Gamma $ by a  possibly infinite direct product of $\Z_2$, and  that this extension splits. We give a formula for Udall’s section restricted to the pure mapping class group and, when $\Gamma$ has finitely many ends, on the whole mapping class group. 

\end{abstract}

\section{Introduction}\label{sec:introduction.}

Given a locally finite graph $\Gamma$, $\map(\Gamma)$  denotes the group of proper homotopy equivalences of $\Gamma$ up to proper homotopy. If $\Gamma$ has marked points, we require these homotopy equivalences to fix the marked points of $\Gamma$. Given a smooth oriented manifold $M$, let $\map(M)$ be the group of orientation-preserving diffeomorphisms of $M$ up to isotopy. If $M$ has boundary, we require these diffeomorphisms to fix the boundary pointwise.  Recall
that $\pi_1(SO(3), \mathrm{Id}) \cong \Z_2$ is generated by a loop $\ell:[0, 1] \xrightarrow{} SO(3)$ that rotates $\R^3$ about an axis by one full turn. Let $S^2$ be the standard sphere in $\R^3$, and define $\tau_0:S^2\times [0,1]\xrightarrow{}S^2\times [0,1]$ by $\tau_0(s, t) = (\ell(t) \cdot s, t)$. Let $S\subset M$ be a smoothly embedded 2-sphere. Fix a tubular neighborhood $U\cong S \times [0, 1]$ of $S$. The sphere twist about $S$, denoted $T_S$, is the isotopy class of the diffeomorphism $\tau : M \xrightarrow{}M$ defined by:

\[  \tau(p)=\left\{
\begin{array}{ll}
       \phi\circ \tau_0 \circ \phi^{-1}(p) & \text{ if } p\in U  \\
     p &  \text{ otherwise}, \\
      
\end{array}
\right. \] where $\phi$ is a diffeomorphism from $S^2\times [0,1]$ to $U$.

If $M$ is a compact manifold with a possibly nonempty boundary, $\ts(M)$ is defined to be the subgroup of $\map(M)$ generated by sphere twists.

 We consider the doubled handlebody $M_{\Gamma}$ associated with $\Gamma$.  This is the manifold constructed by gluing two copies of a regular neighborhood of the image of a proper embedding of $\Gamma$ in $\R^3$ along their common surface boundary by the identity map. In the case where $\Gamma$ is compact, we have that $M_{\Gamma}=M_n$ is the connected sum of $n$ copies of $S^2 \times S^1$ for some $n$, and $\map(\Gamma)=\text{Out}(F_n)$. A sphere system is a smooth proper embedding of (possibly infinitely many) disjoint essential non-peripheral 2-spheres in $M_{\Gamma}$. A sphere twist about a sphere system is the simultaneous twist along all its spheres. $\ts(\mg)$ is defined to be the subgroup of $\map(\mg)$ generated by sphere twists on sphere systems of $\mg$. When $\Gamma$ is compact, this definition agrees with the one given above, since sphere systems are then finite.
 
In \cite{1,2}, Laudenbach proved that in this case there exists a short exact sequence:
$$1 \xrightarrow{} \ts(M_n)\xrightarrow{} \map(M_n)
\xrightarrow{\rho} \Out(F_n) \xrightarrow{}1,$$ 
 where $\ts(M_n) \cong \Z_2^{n}$  is generated by the sphere twists about
the core spheres $S^{2} \times *$. Brendle--Broaddus--Putman proved
in \cite{3}  that this short exact sequence splits by constructing a crossed homomorphism $\tsh:\map(M_n)\xrightarrow{}\text{Twist}(M_n).$ This crossed homomorphism determines a section $s:\Out(F_n)\xrightarrow{}\map(M_n)$ that is given by $s([\phi])=\mathfrak{T}([f^{-1}])[f]$, where $f$ is a  diffeomorphism of $M_n$ with $\rho([f])=[\phi]$. The map $s$ is well defined and does not depend on the choice of the lifts of the elements in $\Out(F_n)$.

In $\cite{4}$ we provided an explicit formula for the section $s$ of $\rho$ in terms of certain diffeomorphisms $F_{i,j},G_j$ of $M_{n}$ defined in Section \ref{sec: construction of diffeomorphisms}.

In \cite{7}, Udall proved that in the general case there is an exact sequence: 

$$1 \xrightarrow{} \ts(M_{\Gamma})\xrightarrow{} \map(M_{\Gamma})
\xrightarrow{\Psi} \map(\Gamma) \xrightarrow{}1.$$ 

He also proved  that the short exact sequence  splits topologically, giving an isomorphism:
$$ \map(\mg)\cong \ts(\mg)\rtimes\map(\Gamma),$$
as topological groups. 
Udall proved this by noticing that the crossed homomorphism constructed in \cite{3}  can be extended to a crossed homomorphism: $$\mathfrak{T}: \map(\mg)\xrightarrow{} \ts(\mg).$$ 

Since $\tsh$ restricts to the identity on $\ts(\mg)$, it determines a continuous section 
$s:\map(\Gamma)\xrightarrow{}\map(\mg)$ that is given by $s([\phi])=\mathfrak{T}([f^{-1}])[f]$, where $f$ is a diffeomorphism of $\mg$ with $\Psi([f])=[\phi]$. The map $s$ is well defined and does not depend on the choice of the lifts of the elements in $\map(\Gamma)$. 

Let $E(\Gamma)$ be the set of ends of $\Gamma$, and let $\pmap(\Gamma)$ be the subgroup of $\map(\Gamma)$ consisting of mapping classes that do not permute the ends. The purpose of this paper is to provide an explicit formula for $s$ on $\pmap(\Gamma)$ and, when $|E(\Gamma)| < \infty$, on all of $\map(\Gamma)$. Let $\map_c(\Gamma)$ be the subgroup of $\map(\Gamma)$ consisting of compactly supported mapping classes. Domat--Hoganson--Kwak showed in \cite{Domatgenerating} that  $\pmap(\Gamma)$ is topologically generated by the closure of $\map_c(\Gamma)$ and the loop shifts. When $|E(\Gamma)| < \infty$, $\map(\Gamma)$ is generated by $\pmap(\Gamma)$ together with the end swaps $p_{i,j}$.
Since $s$ is a continuous homomorphism,  it is enough to compute it on compactly supported classes, on loop shifts, and on end swaps, defined below. Elements of the closure of $\map_c(\Gamma)$ are then handled by taking limits. Every compactly supported class is supported on a finite subgraph of $\Gamma$ that is homotopy equivalent, relative to its points of intersection with the rest of $\Gamma$, to a graph $G_{n,k}$, which we now define.

The standard thorned rose $G_{n,k}$ is a bouquet of $n$ circles with vertex $x_0$ together with $k$ free edges attached to $x_0$ and these free edges have marked endpoints labeled as $\omega_1, \ldots, \omega_{k}$, as shown in the picture below.
\begin{figure}[ht]
    \centering
    \includegraphics[width=0.4\textwidth]{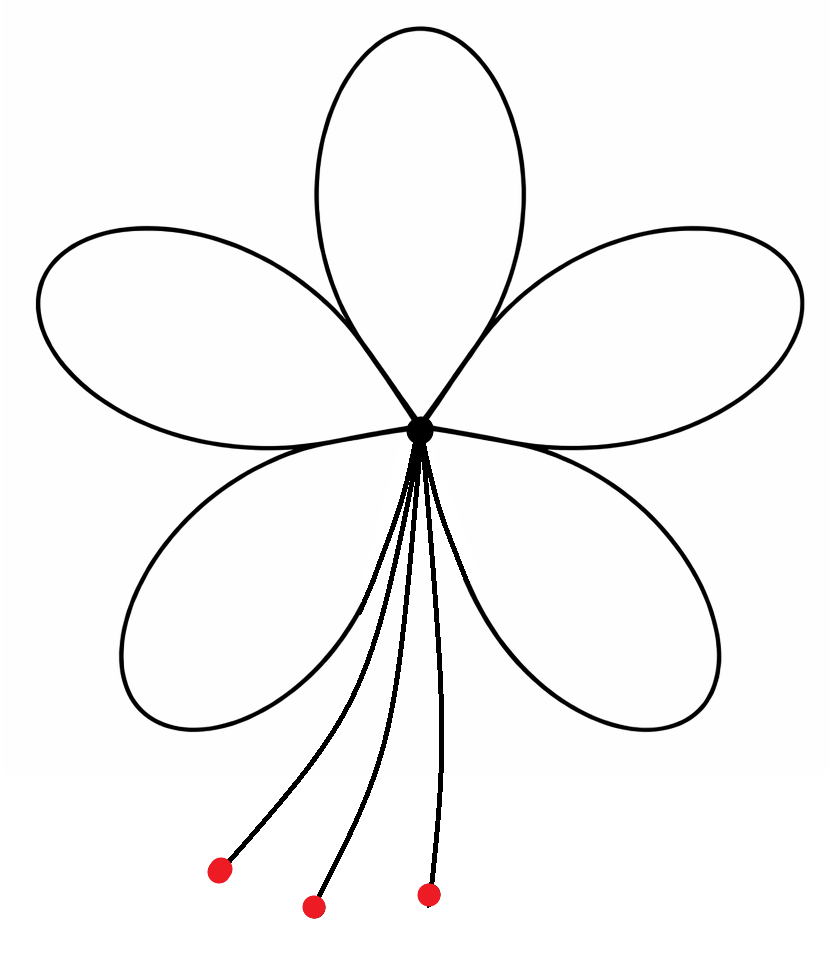}
    \caption{Topological thorned rose $G_{5,3}$.}
\end{figure}
\FloatBarrier
 Let $M_{n,k}$ be the doubled handlebody of $G_{n,k}$, with one boundary sphere per marked point. Hatcher--Vogtmann introduced the following short exact sequence in \cite{5}:
$$ 1\xrightarrow{} \ts(M_{n,k})\xrightarrow{}\map(M_{n,k})\xrightarrow{\rho_{n,k}}\map({{G}_{n,k}})\xrightarrow{}1.$$

To compute $s$ on the closure of $\map_c(\Gamma)$, we proceed as follows. First, we prove that the Hatcher--Vogtmann exact sequence splits, and provide a formula for sections $s_{n,k}$ of $\rho_{n,k}$ in terms of certain diffeomorphisms $F_{i,j},G_j,\Phi_{(a_i,J_l)}$ of $M_{n,k}$ defined in Section \ref{sec: construction of diffeomorphisms}.

\begin{teorema}\label{Hatcher Vogtmann split}
For $k\ge 1$, the Hatcher--Vogtmann short exact sequence is a split short exact sequence with $\ts(M_{n,k})\cong \Z_2^{n+k-1}.$ Moreover, the map $$s_{n,k}:\map(G_{n,k})\xrightarrow{} \map(M_{n,k}),$$ that on Nielsen generators $[R_{i,j}],[I_j]$, and on word maps $[\varphi_{(a_i,J_l)}]$ defined later, for $1\le i, j\le n, i\neq j,$ and $1\le l\le k $, is given by:
    $$ s_{n,k}([R_{i,j}])=[F_{i,j}] ,  s_{n,k}([I_j])=[G_j],  \text{ and }  s_{n,k}([\varphi_{(a_i,J_l)}])=[\Phi_{(a_i,J_l)} ],$$ is a section of $\rho_{n,k}$, where  $F_{i,j}$ , $G_j$  and $\Phi_{(a_i,J_l)} $ are certain diffeomorphisms of $M_{n,k}$ defined in Section \ref{sec: construction of diffeomorphisms}. 

\end{teorema}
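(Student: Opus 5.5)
The plan is to mirror the Brendle--Broaddus--Putman strategy: build a crossed homomorphism $\tsh_{n,k}:\map(M_{n,k})\to\ts(M_{n,k})$ that restricts to the identity on $\ts(M_{n,k})$. Then $s_{n,k}([\phi])=\tsh_{n,k}([f^{-1}])[f]$ is a well-defined homomorphic section of $\rho_{n,k}$. Afterwards I will check that the explicit diffeomorphisms $F_{i,j},G_j,\Phi_{(a_i,J_l)}$ lie in $\ker\tsh_{n,k}$. Once this holds, $s_{n,k}$ on each generator is forced to be the stated class. Indeed, if $\rho_{n,k}([F])=[\phi]$ and $\tsh_{n,k}([F])=1$, the crossed homomorphism identity $1=\tsh_{n,k}([F^{-1}][F])=\tsh_{n,k}([F^{-1}])\cdot [F^{-1}]\cdot\tsh_{n,k}([F])$ gives $\tsh_{n,k}([F^{-1}])=1$, so $s_{n,k}([\phi])=[F]$. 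Since the Nielsen generators together with the word maps generate $\map(G_{n,k})$, the formula then determines $s_{n,k}$.

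First I compute $\ts(M_{n,k})\cong\Z_2^{n+k-1}$. By Laudenbach's argument as extended by Hatcher--Vogtmann, $\ts(M_{n,k})$ is generated by twists about the $n$ core spheres and the $k$ boundary-parallel spheres $\partial_1,\dots,\partial_k$. These commute and have order two, and there is one relation: the product of all boundary twists is trivial, since it is isotopic to a twist about a sphere bounding a ball after capping, or equivalently the product of twists about all boundary components of $S^3$ minus balls is trivial. Independence of the remaining $n+k-1$ generators will be detected via their action on the framing or on $\pi_2$ relative to the boundary spheres. Concretely, I will use the $\Z_2$-valued invariants obtained by evaluating the derivative map on loops and on arcs joining $\partial_1$ to $\partial_l$.

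Next, I construct $\tsh_{n,k}$. Fix a trivialization of $TM_{n,k}$ that is standard near the boundary spheres. For $f\in\diff(M_{n,k})$ fixing the boundary, the derivative $Df$ gives a map $M_{n,k}\to \GLR\simeq SO(3)$, well defined up to homotopy. Its induced map $\pi_1\to\Z_2$, together with its values along arcs from $\partial_1$ to $\partial_l$ (relative classes, well defined because $f$ is the identity on the boundary), defines an element of $H^1(M_{n,k},\partial M_{n,k};\Z_2)$. Modulo the boundary relation this is $\Z_2^{n+k-1}$, and I identify it with $\ts(M_{n,k})$ via the dual basis of the twists. The chain rule $D(fg)=(Df\circ g)\cdot Dg$ gives the crossed homomorphism property. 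Computing the derivative of each sphere twist shows that $\tsh_{n,k}$ is the identity on twists, which also shows that $\ts$ is exactly $\Z_2^{n+k-1}$. The invariant depends only on the isotopy class.

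Finally, I verify that $F_{i,j},G_j,\Phi_{(a_i,J_l)}$ are in $\ker\tsh_{n,k}$ and that they map to $R_{i,j},I_j,\varphi_{(a_i,J_l)}$. The case of $F_{i,j}$ and $G_j$ should follow from \cite{4}, since they are supported away from the thorns. The main obstacle is $\Phi_{(a_i,J_l)}$. It slides the $l$-th boundary sphere around the loop $a_i$, and I must compute its derivative along the arc from $\partial_1$ to $\partial_l$ and along each loop. If a nonzero value appears, my fallback is to replace $\Phi$ by its composite with the appropriate twist, $T_{\partial_l}$ or a core twist, choosing the construction in Section \ref{sec: construction of diffeomorphisms} so that the point-push is performed by a framing-preserving isotopy. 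Here I would first try realizing the push by translating along a parallel framed path.
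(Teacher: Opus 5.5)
Your architecture matches the paper's. You use the relative derivative map, evaluated on the loops $a_m$ and on arcs between boundary spheres to land in $H^1(M_{n,k},\partial M_{n,k})\cong\Z_2^{n+k-1}$. You generate $\ts(M_{n,k})$ by core and boundary twists with the single relation $\prod_l T_{B_l}=1$, do the dual-basis computation, apply the splitting lemma of \cite{3}, and then check that the chosen lifts lie in $\ker\tsh_{n,k}$.

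Two small corrections. First, no quotient ``modulo the boundary relation'' is needed on the cohomology side: the $n$ loops and the $k-1$ arcs $\beta_2,\dots,\beta_k$ already form a basis of $H_1(M_{n,k},\partial M_{n,k})$. Second, the crossed homomorphism identity should read $1=\tsh_{n,k}([F]^{-1})^{[F]}\,\tsh_{n,k}([F])$. Your conclusion $\tsh_{n,k}([F]^{-1})=1$ does follow from it, and the paper leaves that step implicit.

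The genuine gap is in the one step that is new relative to \cite{4}: you never show $\tsh_{n,k}([\Phi_{(a_i,J_l)}])=1$. Your fallback (composing with $T_{B_l}$ or a core twist, or rebuilding the push) would prove a formula for a different lift. It would not give the stated $s_{n,k}([\varphi_{(a_i,J_l)}])=[\Phi_{(a_i,J_l)}]$ for the diffeomorphism of Section~\ref{sec: construction of diffeomorphisms}. As written, the ``moreover'' part is only conditional.

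The missing idea, which is exactly how the paper argues, is that nothing new has to be computed. $\Phi_{(a_i,J_l)}$ is the same model map as $F_{i,j}$ on the same model: $D^2\times S^1$ minus a ball inside $\{x^2+y^2<1/9\}\times S^1$, where $\psi\equiv 1$ so the map is the identity there. The only changes are that the glued sphere $A_i^+$ is replaced by the boundary sphere $B_l$, and $\phi$ is normalized against $\sigma_0$ in the same way.

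Now look at the basis $a_1,\dots,a_n,b_1,\dots,b_{k-1}$. Every $a_m$ (including $a_i$, pushed into the collar $\{r>2/3\}$ of $N$) and every $b_s$ with $s\neq l-1,l$ can be taken disjoint from the shell $\{1/3\le r\le 2/3\}$ that supports $\Phi$. The arcs $b_{l-1}$ and $b_l$ cross that shell once, radially along $J_l$ into $B_l$. This is precisely how $a_i$ crosses the support of $F_{i,j}$ on its way into $A_i^+$. Hence the computation in \cite{4} giving $\tsh([F_{i,j}])(a_i)=0$ transfers verbatim, via Lemma~\ref{lemma computaion derivative}, to $b_{l-1}$ and $b_l$.

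This also shows why the point cannot be left as a contingency: the radial value really does depend on the normalization. In the frame $(\partial_x,\partial_y,\partial_\theta)$, the derivative of the model map is unipotent and hence null-homotopic. In the ambient frame of a solid torus of revolution in $\R^3$, it is homotopic to rotation by $2\pi\psi(r)$ about the axis, which is a generator of $\pi_1(SO(3))$ along a radial arc. That frame would give $\tsh_{n,k}([\Phi])=\tsh_{n,k}([T_{B_l}])\neq 1$ for $k\ge 2$. So you must fix the normalization to be the same one under which \cite{4} obtains $\tsh([F_{i,j}])=1$. With that fixed, the $\Phi$ case is a corollary of the $F_{i,j}$ case rather than an obstacle.
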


When $k=1$, the splitting of the Hatcher--Vogtmann exact sequence was already proved in \cite{3}.  It follows from the work of Hatcher--Wahl in \cite{18} that $\ts(M_{n,k})\cong \Z_2^{n+k-1},$ but in this work we establish this in a more direct way. The proof of Theorem \ref{Hatcher Vogtmann split} requires the introduction of a relative crossed homomorphism for manifolds with boundary.

 Next, we assemble the sections $s_{n,k}$ of the exact Hatcher--Vogtmann sequence to get a formula for the section of $\Psi$ as a limit in the appropriate sense of the sections $s_{n,k}.$ Let 
 $\theta:\map(M_{n,k})\xrightarrow{}\map_c(M_\Gamma)$ be the map that extends a diffeomorphism on $M_{n,k}$ by the identity as defined in Section $\ref{sec: section s and snk}.$ Then we prove the following theorem:

\begin{teorema}\label{thm limit si}
   Let $\Gamma$ be a locally finite graph, and let $[\phi]\in\overline{\map_c(\Gamma)}$.  Let $\{[\phi_i]\}_{i\in\mathbb{N}}$ be a sequence in $\map_c(\Gamma)$ converging to $[\phi],$ and let $\alpha_i: \Gamma_i\xrightarrow{}G_{n_i,k_i}$ be homotopy equivalences  where $\Gamma_i$ is a thorned subgraph with  $\text{supp}(\phi_i)\subseteq \Gamma_i.$   Let $s_i^{\prime}:=\theta \circ s_{n_i,k_i}\circ (\alpha_{i})_{*}$, and let $s_i([\phi_i]):=s_i^{\prime}([\phi_{i}\Bigr|_{\Gamma_i}])$. Then:
   
   
    $$s([\phi])=s([\lim\limits_{i \to \infty}\phi_i])=\lim\limits_{i \to \infty} s_i([\phi_{i}]).$$
\end{teorema}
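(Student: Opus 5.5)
Since $s$ is a continuous homomorphism (Udall's section), we have $s([\phi])=\lim_i s([\phi_i])$. The theorem therefore reduces to the \emph{finite} statement
$$s([\phi_i])=s_i([\phi_i])=\theta\bigl(s_{n_i,k_i}((\alpha_i)_*[\phi_i|_{\Gamma_i}])\bigr)\quad\text{for each } i.$$
Indeed, once this equality holds, both sequences coincide term by term, and the first limit exists by continuity. So the plan is to prove that for a compactly supported class supported on a thorned subgraph $\Gamma_i$, Udall's section agrees with the extension by the identity of the Hatcher--Vogtmann section of Theorem \ref{Hatcher Vogtmann split}.

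\textbf{Step 1: commutativity.} Consider the square formed by $\theta$, the restriction $\map(\Gamma_i)\to\map(\Gamma)$ extending by the identity, and the projections $\rho_{n_i,k_i}$ and $\Psi$. Taking $M_{\Gamma_i}\subset M_\Gamma$ to be the submanifold with boundary spheres corresponding to the thorn endpoints, we need
$$\Psi\circ\theta=\iota\circ(\alpha_i)_*^{-1}\circ\rho_{n_i,k_i}.$$
This holds because a diffeomorphism extended by the identity induces, on the graph (via the retraction $M_\Gamma\to\Gamma$), the homotopy equivalence extended by the identity. It follows that $f:=\theta(\Phi)$, with $\Phi$ a representative of $s_{n_i,k_i}((\alpha_i)_*[\phi_i|_{\Gamma_i}])$, is a lift of $[\phi_i]$ under $\Psi$.

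\textbf{Step 2: the crossed homomorphism.} Since $s([\phi_i])=\mathfrak{T}([f^{-1}])[f]$ for any lift $f$, it suffices to show $\mathfrak{T}([f^{-1}])=1$, i.e.\ $\mathfrak{T}$ vanishes on $\theta(\Phi^{-1})$. I would argue this using the relative crossed homomorphism $\mathfrak{T}_{n,k}$ on $M_{n,k}$ introduced for Theorem \ref{Hatcher Vogtmann split}. First, the section $s_{n,k}$ is presumably defined so that its image is exactly the kernel of $\mathfrak{T}_{n,k}$, because $s_{n,k}([\psi])=\mathfrak{T}_{n,k}([g^{-1}])[g]$. Second, one needs the compatibility
$$\mathfrak{T}\circ\theta=\theta\circ\mathfrak{T}_{n,k}.$$
Both crossed homomorphisms are built as in Brendle--Broaddus--Putman, by measuring the failure of a diffeomorphism to preserve a framing / trivialization of the tangent bundle on spheres, i.e.\ by a homomorphism to $H^1(\,\cdot\,;\Z_2)$-type data. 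Extending by the identity does not change this data outside $M_{\Gamma_i}$. Given the compatibility, $\mathfrak{T}([f^{-1}])=\theta(\mathfrak{T}_{n,k}([\Phi^{-1}]))=1$. Hence $s([\phi_i])=[f]=s_i([\phi_i])$.

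\textbf{Main obstacle.} The hard part is the compatibility $\mathfrak{T}\circ\theta=\theta\circ\mathfrak{T}_{n,k}$. There are two issues:
\begin{enumerate}[(a)]
\item Twists about boundary-parallel spheres of $M_{n,k}$ become twists about essential (separating) spheres in $M_\Gamma$. One must check that Udall's $\mathfrak{T}$ and the relative $\mathfrak{T}_{n,k}$ treat these consistently.
\item The resulting class must not depend on the choice of $\alpha_i$ and of the thorned subgraph.
\end{enumerate}
I would handle both by working directly with the defining framing-comparison formula. I would choose the framing of $M_\Gamma$ to restrict to the chosen framing of $M_{n,k}$, transported via $\alpha_i$. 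Independence of the choices then follows automatically from the well-definedness of $s$: once $s_i([\phi_i])$ is shown to equal $s([\phi_i])$, it cannot depend on them.
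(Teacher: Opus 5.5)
Your proposal is correct and is essentially the paper's argument: continuity of $s$ reduces the statement to the compactly supported case. There your Step 1 is Proposition~\ref{prop lift s mapc}, and your Step 2 (the compatibility $\mathfrak{T}\circ\theta=\theta\circ\mathfrak{T}_{n,k}$ together with $\mathrm{im}\, s_{n,k}\subseteq\ker\mathfrak{T}_{n,k}$) is Proposition~\ref{prop s for compactly supp}. The differences are minor: you obtain $\mathfrak{T}_{n,k}\circ s_{n,k}=1$ from the general crossed-homomorphism identity rather than from the generator computations, and you sketch the framing-compatibility behind the commutative square that the paper simply asserts.
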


Since the mapping classes $s_i([\phi_{i}])$ are explicitly described by Theorem~\ref{Hatcher Vogtmann split}, Theorem~\ref{thm limit si} allows us to express $s([\phi])$ as a limit of these maps.

Denote by $E_{\ell}(\Gamma)$ the set of ends that are accumulated by loops.
Suppose that $|E_{\ell}(\Gamma)|\ge2,$ and let $e_1,e_2\in E_{\ell}(\Gamma)$. Give $\R$ the standard CW structure with vertex set $\Z$. Let $e:\R\hookrightarrow \Gamma$ be a graph embedding such that $e[0,\infty)$ is a ray converging to $e_1$ and $e(-\infty,0]$ is a ray converging to $e_2$. Assume that there is at least one loop $\ell_i$ attached to each vertex $e(i)$ of $e(\R)$, and there is a set (possibly empty) of  edges that are attached to each vertex $e(i)$ as in the picture below:

\begin{figure}[ht]
   \centering
     \includegraphics[width=0.9\textwidth]{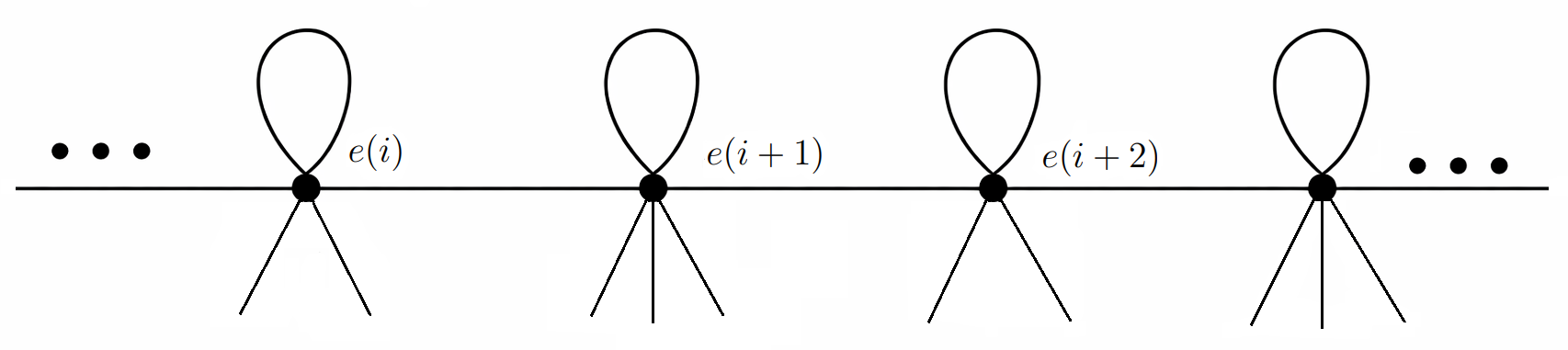}
    \caption{Image $e(\R)$ of a graph embedding $e:\R\hookrightarrow \Gamma$ with at least one loop $\ell_i$ attached at each vertex.}
\end{figure}
 \FloatBarrier

The loop shifts associated  to $e_1$ and $e_2$ are the elements of the group generated by the graph map $h$ that sends the vertex $e(i)$ to $e(i+1)$, the loop $\ell_{i}$ to $\ell_{i+1}$,  an edge $\tilde{e}$ attached to $e(i)$ to the path $\tilde{e}\cup [e(i),e(i+1)]$, and is the identity elsewhere  as in the picture below: 

\begin{figure}[ht]
   \centering
    \includegraphics[width=0.9\textwidth]{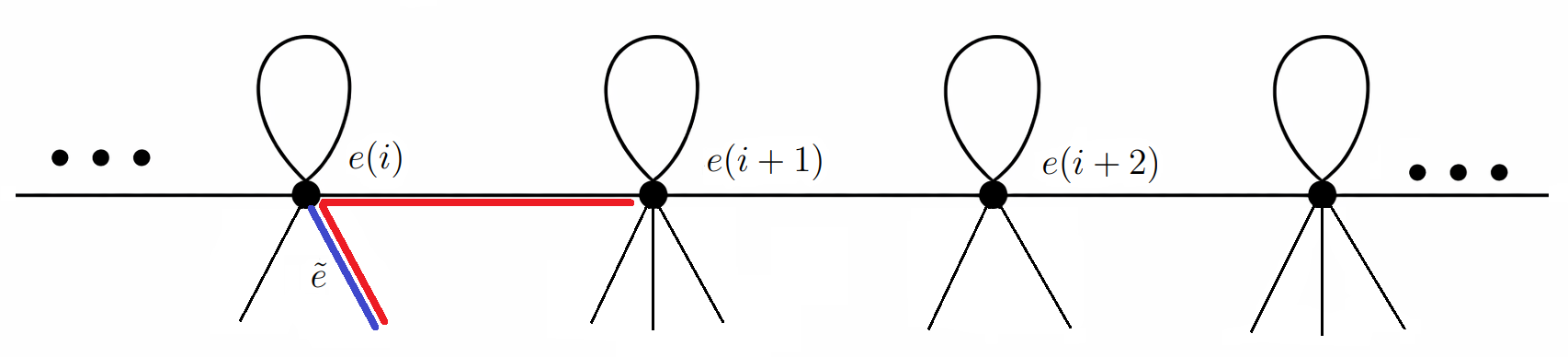}
    \caption{Action of $h$ on $e(\R)$.}
\end{figure}
 \FloatBarrier

As stated in the following theorem, we found lifts for the generating loop shifts defined above:

\begin{teorema}\label{thm section lift of h}
Let $|E_{\ell}(\Gamma)|\ge 2$ and let $e_1,e_2\in E_{\ell}(\Gamma)$ be ends as described above. Let $h$ be a generating loop shift map associated to $e_1$ and $e_2$. Then:
$$s([h])=[H],$$ where $H$ is a certain diffeomorphism  of $\mg$ defined in Section \ref{sec: construction of diffeomorphisms}.

\end{teorema}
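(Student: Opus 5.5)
To prove $s([h])=[H]$, I will use the characterization $s([\phi])=\tsh([f^{-1}])[f]$ for any lift $f$ of $[\phi]$. Since $H$ is built in Section \ref{sec: construction of diffeomorphisms} so that $\Psi([H])=[h]$, it suffices to show that $\tsh([H^{-1}])$ is trivial in $\ts(\mg)$. Equivalently, I want $\tsh([H])=1$, because the crossed homomorphism identity $\tsh(xy)=\tsh(x)\cdot x\tsh(y)x^{-1}$ gives $1=\tsh([H][H^{-1}])$. So the plan is to first check $\Psi([H])=[h]$, and then show that $H$ lies in the ``untwisted'' part of $\map(\mg)$ picked out by $\tsh$.

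\textbf{Step 1: $H$ lifts $h$.} Recall the construction of $H$. One takes the doubled handlebody piece over $e(\R)$, namely a $\R$-indexed chain of pieces, each a copy of $S^2\times S^1$ (from the loop $\ell_i$) with additional boundary or tube attachments for the extra edges at $e(i)$. The diffeomorphism $H$ translates this chain by one unit and drags the tubes of the attached edges along the core arc $[e(i),e(i+1)]$, as a finger/slide move. I will verify on $\pi_1$, via the identification of $\pi_1(\mg)$ with $\pi_1(\Gamma)$ and proper homotopy of the core graph, that $H$ induces $h$ up to proper homotopy. This uses that $h$ is a proper graph map and that $H$ agrees with the identity outside a neighborhood of $e(\R)\cup\bigcup_i \ell_i\cup\{\text{attached edges}\}$.

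\textbf{Step 2: vanishing of $\tsh(H)$.} Udall's $\tsh$ extends the Brendle--Broaddus--Putman crossed homomorphism. Concretely, $\tsh([f])$ is computed sphere-by-sphere: for each core sphere, or each sphere in the defining system, it records the $\Z_2$-valued framing obstruction of $f$ restricted to that sphere relative to the image system. This is done through the action on the frame bundle, i.e.\ the map to $\pi_1(SO(3))$, and it vanishes for diffeomorphisms that carry a fixed trivialization of $T\mg$ to a homotopic one. So I will equip $\mg$ with the standard framing coming from the embedding in $\R^3$ doubled, and construct $H$ (or choose it within its isotopy class) so that it is a translation that preserves this framing exactly on the chain of $S^2\times S^1$ pieces. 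On the slide regions of the attached edges, the derivative of $H$ is a straight-line isotopy of the identity framing, with no full rotation.

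The main obstacle is the finger-slide regions. There one must check that the induced loop in $SO(3)$ along each boundary/core sphere of an attached edge is null-homotopic. I would handle this by reducing to the compact case. For each attached edge, the slide is supported in a compact piece homeomorphic to some $M_{n,k}$ and coincides with a word map $\Phi_{(a_i,J_l)}$ from Theorem \ref{Hatcher Vogtmann split}, whose $\tsh$-value is already known to be trivial since $s_{n,k}$ sends it to itself. Since $\tsh$ is local and continuous, as a product over spheres in $\ts(\mg)\cong\prod\Z_2$, the vanishing on each compact piece together with the exact framing preservation on the translation part gives $\tsh([H])=1$, hence $s([h])=\tsh([H^{-1}])[H]=[H]$.
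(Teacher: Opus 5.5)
Your reduction to proving $\tsh([H])=1$ is the same as the paper's, but your treatment of what you call the main obstacle, the finger-slide regions, does not work. $H$ cannot be split into a framing-preserving translation part and compactly supported slide pieces. In the model, $H=\phi\circ h'\circ\phi^{-1}$ with $h'(x,y,z)=(x+4\psi(\sqrt{y^2+z^2}),y,z)$. This is a rigid translation by one period for $r=\sqrt{y^2+z^2}\le 1$, the identity for $r\ge 5/4$, and a shear in between, all along the bi-infinite tube. Its restriction to any compact piece pushes the core across the spheres $B_k^{\pm}$ separating consecutive pieces, so it is not a diffeomorphism of any $M_{n,k}$ rel boundary.

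This cannot be repaired by a better decomposition. $[h]$ has nonzero flux, so by Theorem \ref{thm basis for pmap} it is not in $\overline{\map_c(\Gamma)}$; that is exactly why loop shifts are separate generators. Moreover, $h$ slides the attaching point of an attached edge along the tree edge $[e(i),e(i+1)]$. That is not a word map $\varphi_{(a_i,J_l)}$, which drags a free edge once around a loop and back, so Theorem \ref{thm cross Phi} says nothing about it. Your final ``product over spheres'' step therefore has nothing to assemble. Also, $\tsh([f])$ is not a sphere-by-sphere framing obstruction; it is obtained by evaluating $\mathcal{D}([f])$ on loops.

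The missing step is the direct computation behind your remark about a ``straight-line isotopy''; it makes the compact reduction unnecessary. Since $\psi$ depends only on $r$ and $h'$ preserves $r$, at every point of $N'$ we have
\[
\mathcal{D}(h')^{-1}=\begin{bmatrix}1 & -4\psi'(r)\,y/r & -4\psi'(r)\,z/r\\ 0 & 1 & 0\\ 0 & 0 & 1\end{bmatrix}=I+X .
\]
Here $X$ is strictly upper triangular and $X=0$ wherever $r\le 1$ or $r\ge 5/4$. By Lemma \ref{lemma computaion derivative}, on $N$ the map $\mathcal{D}([H])$ is, up to homotopy, $\mathcal{D}(h')\circ\phi^{-1}$, and it is the identity off $N$. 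So it takes values in the contractible group of unipotent upper triangular matrices. The straight-line homotopy to $I$ is stationary where $X=0$, so it extends by the identity over $\mg\setminus N$. Hence $\tsh([H])[\gamma]=0$ for every loop $\gamma$, including loops running through the shear region into the attached edges. This is Theorem \ref{thm crossed H}, and $s([h])=\tsh([H]^{-1})[H]=[H]$ then follows as you say.

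This argument needs the chart $\phi\colon N'\to N$ to carry the standard framing of $N'$ to the fixed trivialization $\sigma_0$. You are not free to choose your own framing of $\mg$, such as one ``from the embedding in $\R^3$ doubled''. Changing the trivialization by a map $g\colon\mg\to\GLR$ with class $c\in H^1(\mg)$ changes $\tsh([H])$ by $c-H^{*}c$. That difference is nonzero as soon as $c(a_k)\neq c(a_{k+1})$ for some $k$, and then the same $H$ would not be the value of the section.
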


 Since $\pmap(\Gamma)$ is topologically generated by the closure of the compactly supported mapping classes and  by the loop shifts,  Theorems \ref{thm limit si} and  \ref{thm section lift of h} describe the section $s$ of $\Psi$ restricted to $\pmap(\Gamma)$.


If $|E(\Gamma)|<\infty$ and $|E_{\ell}(\Gamma)|= 0,$ then the canonical form of $\Gamma$ (see Section \ref{sec:generating sets}) can be further homotoped such that $\Gamma$ looks as follows:

\begin{figure}[ht]
    \centering
    \includegraphics[width=0.7\textwidth]{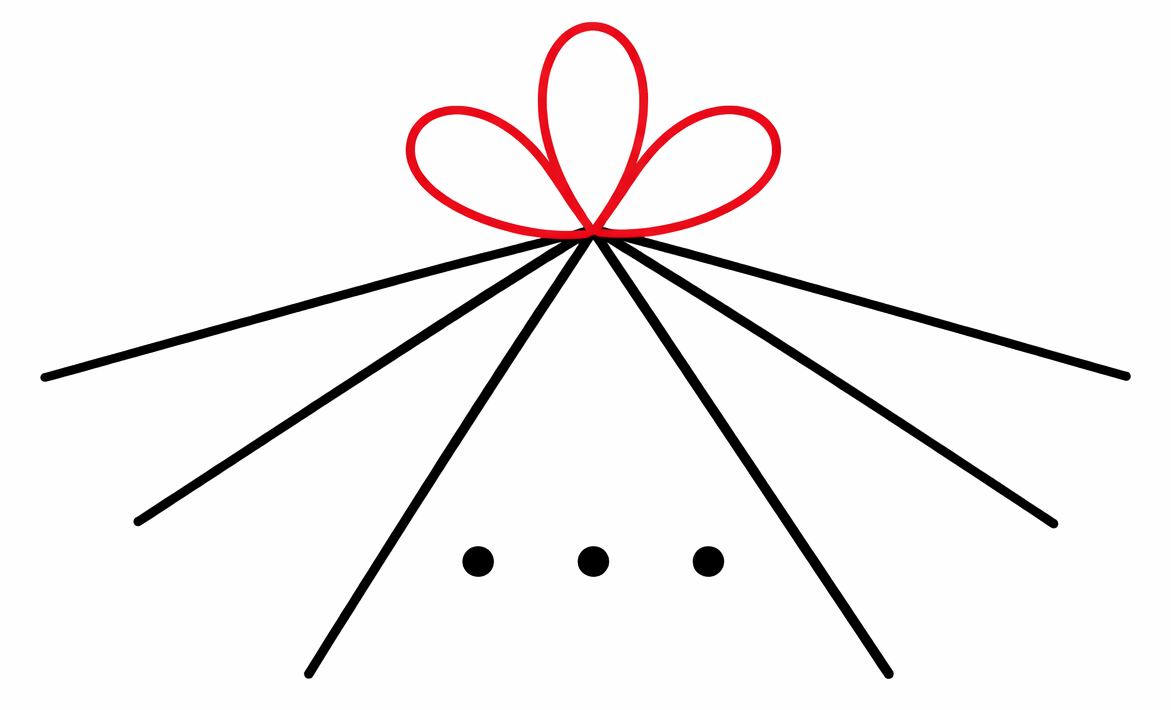}
    \caption{Canonical form of $\Gamma$ when $|E_{\ell}(\Gamma)|= 0$.}
\end{figure}
 \FloatBarrier
If $|E(\Gamma)|<\infty$ and $|E_{\ell}(\Gamma)|\ge 1,$ then the canonical form of $\Gamma$ can be further homotoped such that $\Gamma$ looks as follows:

\begin{figure}[ht]
    \centering
  \includegraphics[width=0.7\textwidth]{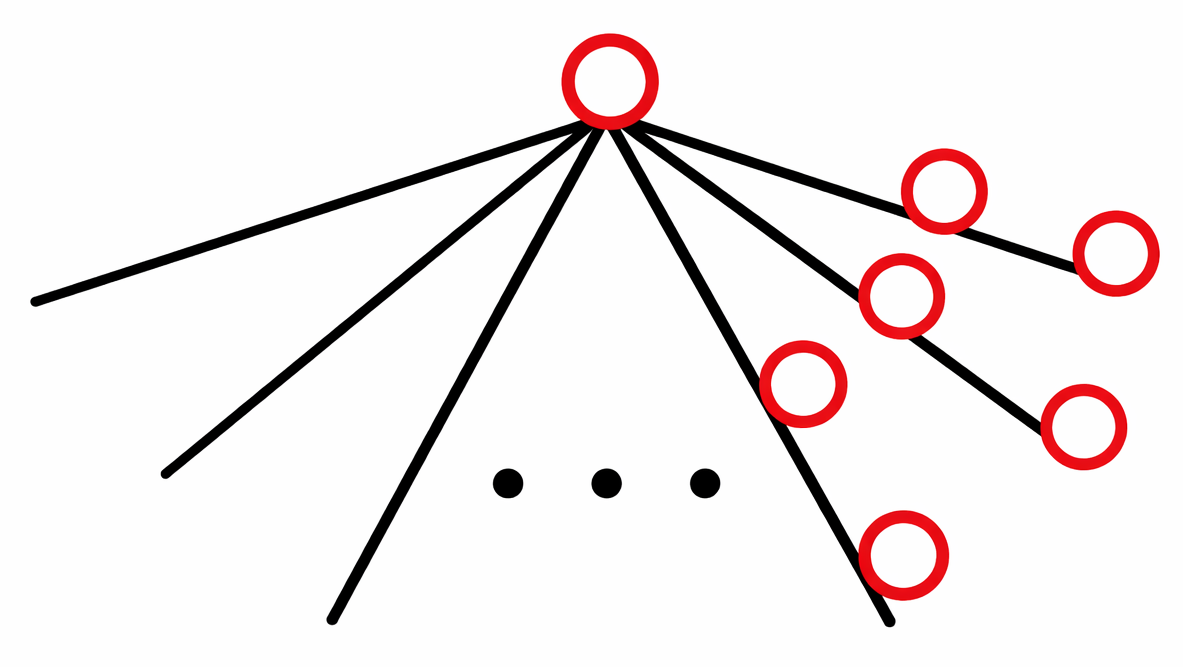}
    \caption{Canonical form of $\Gamma$ when $1\le |E_{\ell}(\Gamma)|< \infty$.}
\end{figure}
\FloatBarrier

We say that two ends have the same type if both ends are accumulated by loops or both are not. 
By $p_{i,j}$ we denote the homotopy equivalence of $\Gamma$ that  induces a swapping of the $i$-end and the $j$-end of the same type.  Let $\Gamma$ be in canonical form, and let $e_i$ and $e_j$ be two ends of the
same type, with vertices $v_1, v_2, \ldots$ and $v'_1, v'_2, \ldots$,
respectively. Then $p_{i,j}$ is the graph map that swaps $v_k$ and $v'_k$ for
every $k$ and is the identity outside $e_i \cup e_j$. If $e_i$ and $e_j$ are
accumulated by loops, with loops $\ell_k$ and $\ell'_k$ attached at $v_k$ and
$v'_k$, then $p_{i,j}$ also swaps $\ell_k$ and $\ell'_k$. We show this map in
the picture below.


\begin{figure}[ht]
    \centering
  \includegraphics[width=0.5\textwidth]{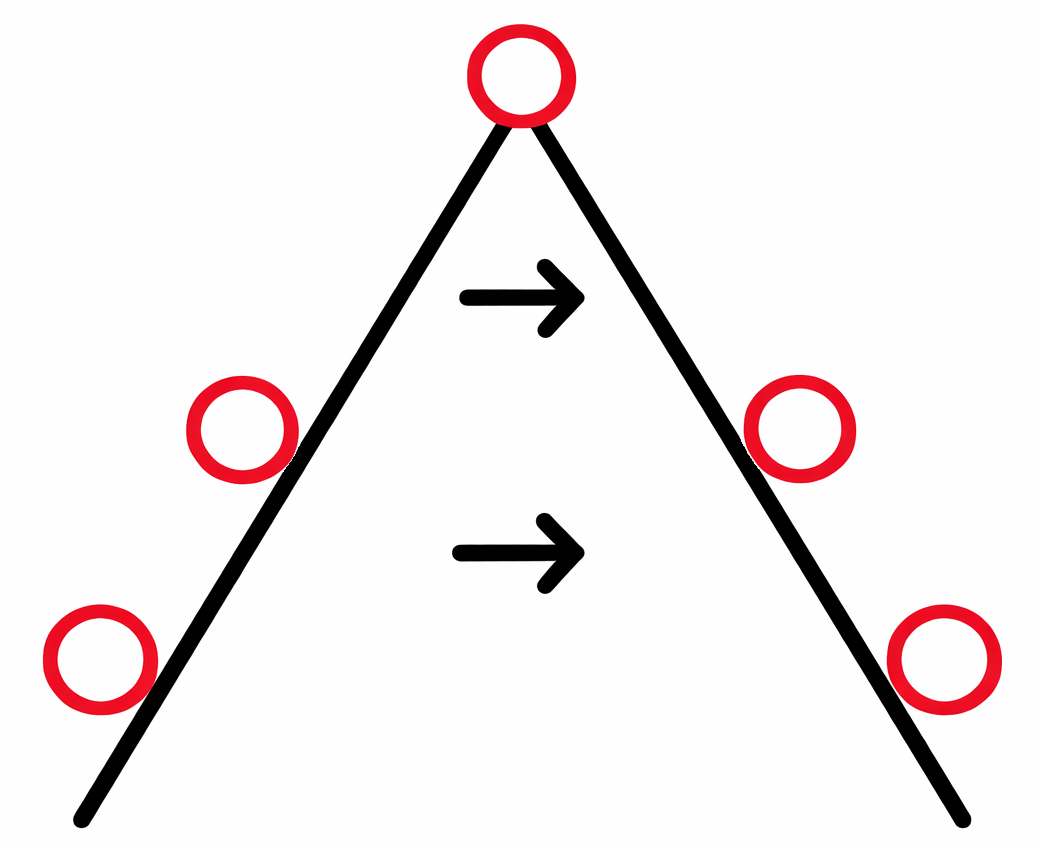}
    \caption{Action of $p_{i,j}$ on  the canonical form of $\Gamma$ when $1\le |E_{\ell}(\Gamma)|< \infty$.}
\end{figure}
\FloatBarrier

We found lifts of $p_{i,j}$ as stated in the following theorem:

\begin{teorema}\label{thm section lift of pij}
Suppose that $|E(\Gamma)|<\infty$. Let $ e_i,e_j\in E_{\ell}(\Gamma)$ or $e_i,e_j\in E(\Gamma)\setminus{}E_{\ell}(\Gamma)$.
   Then:

$$ s([p_{i,j}])=[P_{i,j}],$$  where $P_{i,j}$ is a certain diffeomorphism of $\mg$ defined in Section \ref{sec: construction of diffeomorphisms}.

\end{teorema}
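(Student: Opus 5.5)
To prove Theorem \ref{thm section lift of pij}, I would use the characterization of the section that comes from Udall's crossed homomorphism. By definition, $s([p_{i,j}])=\tsh([f^{-1}])[f]$ for any diffeomorphism $f$ of $\mg$ with $\Psi([f])=[p_{i,j}]$. Moreover, $\tsh$ restricts to the identity on $\ts(\mg)$, and the image of $s$ is exactly the kernel of $\tsh$. So it is enough to show two things about the explicit diffeomorphism $P_{i,j}$:
\begin{enumerate}[(i)]
\item $\Psi([P_{i,j}])=[p_{i,j}]$;
\item $\tsh([P_{i,j}])=1$, or equivalently $\tsh([P_{i,j}^{-1}])=1$.
\end{enumerate}
Given these, $s([p_{i,j}])=\tsh([P_{i,j}^{-1}])[P_{i,j}]=[P_{i,j}]$.

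For (i), I would take $\Gamma$ in canonical form and build $P_{i,j}$ in the natural way. On the pieces of $\mg$ lying over the two end rays $e_i$ and $e_j$, it interchanges the tubes around $[v_k,v_{k+1}]$ and $[v'_k,v'_{k+1}]$. When the ends are accumulated by loops, it also interchanges the copies of $S^2\times S^1$ summands coming from $\ell_k$ and $\ell'_k$, with the identification made by a fixed reference diffeomorphism. Near the central part where the two rays attach, it performs a "half-twist" type exchange of the two attaching tubes, and it is the identity elsewhere. Checking that the induced proper homotopy equivalence on $\Gamma$ is $p_{i,j}$ is then a direct comparison on the core graph: $\mg$ deformation retracts to a copy of $\Gamma$, and $P_{i,j}$ preserves that copy and acts on it as $p_{i,j}$.

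For (ii), I would recall how $\tsh$ is computed. Following Brendle--Broaddus--Putman and Udall, $\tsh([f])$ is determined by the action of $f$ on framings, or spin-structure-type data, of the core spheres (and, in the infinite case, of a locally finite sphere system). The sphere twist factor attached to each sphere records whether $f$ carries the trivialization of the normal/tangent data along the corresponding loop to the standard one. I would choose the reference diffeomorphisms identifying the summands over $\ell_k$ and $\ell'_k$, and the tubes over the ray edges, so that they are restrictions of a global involutive symmetry. Concretely, I would realize them as a reflection-composed-with-reflection, that is, a rotation of $\R^3$ by $\pi$ about an axis exchanging the two rays in a symmetric proper embedding of $\Gamma$. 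Such an ambient rotation preserves the standard framing, so each local contribution to $\tsh$ is trivial.

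Because the sphere system is infinite, $\tsh$ has to be evaluated coordinatewise in the product $\prod \Z_2$. Here I would use continuity and locality: each coordinate depends only on $P_{i,j}$ restricted to a compact piece. On each such piece, $P_{i,j}$ agrees with a finite-type diffeomorphism to which Theorem \ref{Hatcher Vogtmann split}, or the $\tsh$-triviality of the $F_{i,j},G_j$-type maps, applies.

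The main obstacle is step (ii) near the junction of the two rays, where $P_{i,j}$ is not simply a translation-like exchange and could pick up a nontrivial twist about the spheres separating the ends. I would first try to make the global rotation-by-$\pi$ model exact there, so that the junction contribution vanishes automatically. If that fails, I would compose $P_{i,j}$ with the appropriate finite product of twists about the separating spheres and compute those coefficients by restricting to a thorned subgraph homotopy equivalent to some $G_{n,k}$ and using $s_{n,k}$.
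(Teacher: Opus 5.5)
Your reduction is the paper's: since $s([p_{i,j}])=\tsh([P_{i,j}]^{-1})[P_{i,j}]$, everything rests on $\tsh([P_{i,j}])=1$. Your framing-preserving identification of the two end neighbourhoods plays the role of the paper's $\nu$. (In (i), $\mg$ does not deformation retract onto $\Gamma$, since a sphere separating $e_i$ from $e_j$ is nonzero in $H_2(\mg)$. It only retracts onto $\Gamma$, which is enough for your comparison.)

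The gap is in (ii), at the junction. You leave it open, and neither remedy you offer works.
\begin{itemize}
\item The junction map cannot be made an exact rotation by $\pi$. $P_{i,j}$ must be the identity near $\partial N$, so on the shell it has to interpolate, as in the paper's formula $(\theta+\psi(r)\pi,\varphi,r)$, and its derivative there is not constant.
\item Composing with twists about spheres separating the ends changes nothing, because such twists are already trivial in $\map(\mg)$. For a separating sphere $S$, $\tsh([T_S])$ evaluated on a loop is the mod-$2$ intersection number of the loop with $S$, which is $0$. Since $\tsh$ restricts to the identity on $\ts(\mg)$, this gives $[T_S]=1$. So the worry that the junction ``could pick up a nontrivial twist about the spheres separating the ends'' is unfounded.
\item Your locality paragraph cannot appeal to Theorem \ref{Hatcher Vogtmann split}. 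A compact $P_{i,j}$-invariant piece has two boundary spheres that $P_{i,j}$ exchanges, so the restriction is not a boundary-fixing diffeomorphism of any $M_{n,k}$. No such reduction is needed anyway: evaluating $\tsh$ on a cycle is already a compact computation.
\end{itemize}

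The missing idea is that the shell contributes nothing. The paper's proof of Theorem \ref{thm cross Pij} uses this tacitly when it checks only loops in $N_i'$, in $N_j'$, and outside $N'$.
\begin{itemize}
\item The region $B^3\setminus(D_i\cup D_j)$ is simply connected, and $N_i$, $N_j$ and $\mg\setminus N$ are each attached to it along a single sphere. Mayer--Vietoris then gives $H_1(\mg)=H_1(N_i)\oplus H_1(N_j)\oplus H_1(\mg\setminus N)$.
\item On a cycle carried by one of these pieces, the value of $\tsh([P_{i,j}])$ is $\mathcal{D}([P_{i,j}])_*$ of that cycle. So it depends only on $\mathcal{D}([P_{i,j}])$ restricted to that piece.
\item It vanishes outside $N$ because $P_{i,j}$ is the identity there. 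It vanishes on $N_i$ and $N_j$ precisely because $\nu$ preserves the trivialization up to homotopy.
\end{itemize}
Concretely, a loop entering $N_i$ can be homotoped to cross the shell along a single radial segment of the rotation axis, in and then back out. Along that segment the derivative is a rotation whose angle runs from $0$ to $\pi$, and this path is then followed by its reverse. This is the same cancellation that gives $\tsh([G_j])=1$, whose shell map has the identical formula. So once your identification of the two ends is framing-preserving, (ii) is finished; the junction was never the real obstacle.
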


When $|E(\Gamma)|<\infty$, we have that $\map(\Gamma)$ is generated by all the pure mapping classes and by all $p_{i,j}$. Thus, in this case, Theorems \ref{thm limit si}--\ref{thm section lift of pij} describe the section $s$ of $\Psi$.   
$$ $$

\textbf{Outline}: The paper is organized as follows. In Section \ref{sec:generating sets} we recall generating sets for $\mathrm{PMap}(\Gamma)$ and, when $E(\Gamma)$ is finite, for $\mathrm{Map}(\Gamma)$, as well as for $\mathrm{Map}(G_{n,k})$ in terms of Nielsen generators and word maps. In Section \ref{sec: construction of diffeomorphisms} we construct the diffeomorphisms $F_{i,j}$, $G_j$, $\Phi_{(a_i,J_l)}$, $H$ and $P_{i,j}$ that lift these generators. In Section \ref{sec:Twisting crossed} we define the relative twisting
crossed homomorphism $\tsh_{n,k}$ for $M_{n,k}$, identify
$\mathrm{Twist}(M_{n,k})$ with
$H^1(M_{n,k},\partial M_{n,k})\cong\mathbb{Z}_2^{\,n+k-1}$,
and deduce that the Hatcher--Vogtmann sequence splits. In Section \ref{sec:Computation twisting crossed} we show that the twisting crossed homomorphism vanishes on each of these
lifts: $\tsh_{n,k}$ on $F_{i,j}$, $G_j$ and $\Phi_{(a_i,J_l)}$, and $\tsh$ on $H$ and $P_{i,j}$. Finally, in Section \ref{sec: section s and snk} we combine these computations to prove Theorem \ref{Hatcher Vogtmann split}, express the section on the closure of $\mathrm{Map}_c(\Gamma)$ as a limit of the sections $s_{n,k}$ (Theorem \ref{thm limit si}), and prove Theorems \ref{thm section lift of h} and \ref{thm section lift of pij}. Throughout, all (co)homology groups have $\Z_2$ coefficients.


\section{Generating set for $\map(\Gamma)$ and $\map(G_{n,k})$}\label{sec:generating sets}

Given a locally finite graph $\Gamma$, denote by $E(\Gamma)$ the set of ends of $\Gamma$ and denote by $E_{\ell}(\Gamma)$ the set of ends accumulated by loops of $\Gamma$.
Denote by $\Homeo(E(\Gamma),E_{\ell}(\Gamma))$ the group of homeomorphisms of $E(\Gamma)$ that preserve $E_{\ell}(\Gamma)$.  $\pmap(\Gamma)$ is defined to be the kernel of the action of $\map(\Gamma)$ on   $E(\Gamma)$. The following short exact sequence was studied in \cite{Domatgenerating}:
$$1 \xrightarrow{}\pmap(\Gamma)\xrightarrow{} \map(\Gamma) \xrightarrow{}\Homeo(E(\Gamma), E_{\ell}(\Gamma)) \xrightarrow{}1.$$

Thus, $\map(\Gamma)$ is generated by a set of generators of $\pmap(\Gamma)$ and by  chosen lifts of a set of generators of $\Homeo(E(\Gamma),E_{\ell}(\Gamma))$. By $\map_c(\Gamma)$ we denote the mapping classes that are compactly supported. 

Let $\Gamma_c$ denote the core graph of $\Gamma,$  and let $T$ denote the maximal tree of $\Gamma$, which is included in  $\Gamma_c$. 
Domat--Hoganson--Kwak showed in \cite{Domatgenerating} that we can select a countable collection $\mathcal{A} = \{A_i\}_{i\in I}$ of cylinder sets of $E_{\ell}(\Gamma)$  and a map  $\iota : \mathcal{A}\xrightarrow{} T$, such that the closures in $\Gamma_c$ of the complementary components of the image of $\mathcal{A}$ are one-ended. Additionally, they showed that $\Gamma$ can be properly homotoped so that these one-ended components are replaced by certain “grids of loops”, where it is easier to identify commuting loop shifts. Moreover, they showed that there is a map $\Pi:\pmap(\Gamma)\xrightarrow{}\prod\limits_{\alpha} \Z$, for some $\alpha$, defined using flux maps, which were first defined for locally finite, infinite graphs by Domat--Hoganson--Kwak in \cite{Domatcoarse}. 

\begin{teorema}{\cite[Theorem B]{Domatgenerating}}
\label{thm basis for pmap}
 Let $\Gamma$ be a locally finite graph. Let $\alpha = max\{0, |E_{\ell}(\Gamma)|-1\}$  for $|E_{\ell}(\Gamma)| < \infty$
and $\alpha = \aleph_0$ otherwise.
 Then we have the following short exact sequence:
$$1\xrightarrow{}\overline{ \map_c(\Gamma)}\xrightarrow{} \pmap(\Gamma) \xrightarrow{\prod} \prod\limits_{\alpha} \Z\xrightarrow{} 1.$$
Moreover, this short exact sequence splits. In particular, we have: $$\pmap(\Gamma) = \overline{\map_c
(\Gamma)} \rtimes \prod\limits_{\alpha} \Z.$$ 
\end{teorema}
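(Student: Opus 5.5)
Since this is \cite[Theorem B]{Domatgenerating}, we could simply quote it. The plan below indicates how one would reprove it. It has three parts: build the homomorphism $\Pi$ from flux maps, identify its kernel with $\overline{\map_c(\Gamma)}$, and split it using commuting loop shifts.

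\textbf{Flux maps and $\Pi$.} First properly homotope $\Gamma$ into the canonical form above, with the countable family $\mathcal{A}=\{A_i\}$ of cylinder sets of $E_{\ell}(\Gamma)$ and the map $\iota:\mathcal{A}\to T$. Each $A_i$ then corresponds to a separating edge $\iota(A_i)$ of the maximal tree whose far side contains exactly the ends in $A_i$. When $|E_{\ell}(\Gamma)|<\infty$, choose $\alpha=|E_{\ell}(\Gamma)|-1$ independent such cuts; otherwise take countably many.

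For $[f]\in\pmap(\Gamma)$ and a cut $A$, define the flux $\phi_A([f])\in\Z$ as the net number of loops that $f$ pushes across the cut. Concretely, take a large compact core $K$ on the $A$-side. Since $f$ fixes the ends, it preserves the $A$-side up to a compact error. The flux is then the difference of the ranks of the relevant free factors, namely $\mathrm{rk}$ of $\pi_1$ of $f$ of the $A$-side relative to the $A$-side, normalised by intersecting with $K$.

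The first step is to check three things. The number $\phi_A([f])$ is independent of $K$ and of the proper homotopy representative. It is a homomorphism, by additivity under composition. It is continuous, meaning locally constant in the permutation topology, because it is determined by the action on a fixed compact piece. Setting $\Pi=\prod_i\phi_{A_i}$ gives a continuous homomorphism $\pmap(\Gamma)\to\prod_\alpha\Z$.

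\textbf{Splitting.} This step is easiest and gives surjectivity. Inside the grids of loops, choose for each $i$ a loop shift $h_i$ along a bi-infinite line running from a base end into $A_i$, as in the definition of loop shifts above. Arrange the lines so that the $h_i$ have pairwise disjoint supports away from a common compact set, hence commute. Arrange also that $\phi_{A_j}(h_i)=\delta_{ij}$; this is why the grid normal form is needed, since fluxes must be triangularised. Because the supports leave every compact set, any infinite product $\prod_i h_i^{n_i}$ converges in $\pmap(\Gamma)$. This yields a continuous homomorphism $\prod_\alpha\Z\to\pmap(\Gamma)$ that is a section of $\Pi$.

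\textbf{Kernel.} Compactly supported maps have zero flux, so continuity gives $\overline{\map_c(\Gamma)}\subseteq\ker\Pi$. The reverse inclusion is the main obstacle.

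Given $f$ with all fluxes zero and a compact $K$, we want a compactly supported $g$ agreeing with $f$ on $K$ up to proper homotopy. The plan is as follows. Take a larger compact core $L\supset K\cup f(K)$. Zero flux across every cut bounding a complementary component of $L$ means $f$ maps the core of $L$ into a subgraph with the same rank on each side, up to compact error. Hence one can find a homotopy equivalence $g$ of a finite thorned subgraph containing $L$ that agrees with $f$ on $K$ and fixes the boundary points; extend it by the identity.

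The delicate point is the components of $\Gamma\setminus L$ that are not accumulated by loops, or that contain infinitely many ends. Here one uses that the closures of complementary components of $\iota(\mathcal{A})$ are one-ended. On a one-ended component, zero total flux forces the rank bookkeeping to balance. Letting $K$ exhaust $\Gamma$ gives $g_K\to f$, so $f\in\overline{\map_c(\Gamma)}$.
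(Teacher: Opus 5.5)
The paper gives no proof of this statement; it is imported from \cite{Domatgenerating}. Quoting it, as your first sentence proposes, is therefore exactly what the paper does. Read as a reproof, however, your sketch has genuine gaps, and the most serious is in the kernel step.

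\textbf{Kernel step.} The target you set there is too weak. A compactly supported $g$ agreeing with $f$ on $K$ exists even for a loop shift $h$: take a finite ``rotation'' of the loops in a window around $K$ that agrees with $h$ on $K$ and sends the last loop of the window back to the first. Yet $h\notin\overline{\map_c(\Gamma)}$. For this $g$, the map $g^{-1}h$ is the identity on $K$ but moves a loop from the $e_2$-side complementary component of $K$ into the $e_1$-side one.

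Convergence $g_K\to f$ requires $g_K^{-1}f$ to lie in the basic neighbourhood of the identity determined by $K$. That is, it must be represented by a map that is the identity on $K$ \emph{and} preserves every complementary component of $K$. Producing such a $g$ from the vanishing of the flux across each complementary component, one at a time, is the actual content of the kernel computation in \cite{Domatgenerating}. Your ``hence one can find $g$'' skips it.

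There is a second gap in the same step. Those complementary components have arbitrary clopen end sets, whereas $\Pi(f)=0$ only says that the fluxes $\phi_{A_i}$ across the chosen cuts vanish. To pass from one to the other you need three facts:
\begin{itemize}
\item additivity, $\phi_{\mathcal E\sqcup\mathcal F}=\phi_{\mathcal E}+\phi_{\mathcal F}$;
\item the relation $\phi_{E_\ell(\Gamma)}=0$;
\item that the $A_i$ generate all clopen subsets of $E_\ell(\Gamma)$ under finite Boolean operations.
\end{itemize}
``Countably many'' arbitrary cuts do not suffice. If no chosen cut separates two loop ends, the loop shift between them lies in $\ker\Pi$. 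It has flux $\pm1$ across any cut that does separate them, so it is not in $\overline{\map_c(\Gamma)}$. The generation property is where the one-endedness of the pieces of $\Gamma_c$ cut along $\iota(\mathcal A)$ is really used. It forces the $A_i$ to separate points of $E_\ell(\Gamma)$, hence, by compactness, to generate the clopen algebra. It plays no role in a ``rank bookkeeping'' on one-ended components.

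\textbf{Splitting step.} ``Pairwise disjoint supports away from a common compact set'' does not imply that the $h_i$ commute. Nor does it give the local finiteness (each compact set meets only finitely many supports) that makes $\prod_i h_i^{n_i}$ converge. Indeed, if infinitely many supports meet a common compact set, local finiteness fails outright.

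Lines issuing from a common base end cannot avoid this in general. When $E_\ell(\Gamma)$ is a Cantor set, infinitely many of them must cross one and the same cut edge. Their supports are then neither disjoint nor locally finite, and their flux matrix is not diagonal.

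What works is to let $h_i$ run between the two one-ended pieces adjacent to the single cut edge $\iota(A_i)$. Use the grids only to give each shift its own ray inside those pieces, so that a piece with infinitely many adjacent cut edges can host infinitely many disjoint rays to its one end. The supports can then be taken pairwise disjoint and locally finite. Moreover, $\Pi(h_i)$ is, up to sign, the $i$-th standard basis vector, so no triangularisation is needed.
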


Here, $\prod\limits_{\alpha} \Z$ is topologically generated by the image under $\Pi$ of some commuting loop shifts $h_{\alpha_i}$, $1\le i \le \alpha$. We will see that our calculations hold not only for these commuting loop shifts but for all loop shifts.


The standard form of a locally finite graph $\Gamma$, as defined by Domat--Hoganson--Kwak in \cite{Domatgenerating}, is a tree with loops attached at some of the vertices. In certain cases, it is more convenient to represent a graph in a different form.
We consider the canonical form of a graph $\Gamma$ introduced by Ayala--Dominguez--Marquez--Quintero in \cite{ayala}. We say that a graph $\Gamma$ is in canonical form if $\Gamma$ can be described as follows:
\begin{itemize}

\item The maximal tree of $\Gamma$ can be identified with a subset of the standard Cantor tree with root vertex $v_0$.

\item  There is a finite bouquet of circles attached at the root vertex $v_0$.

\item Each end not accumulated by loops is represented by a proper embedding of $\R_{\ge 0}$ into $\Gamma$ as a CW complex starting at $v_0$.

\item Each end accumulated by loops is represented by a proper embedding of $\R_{\ge 0}$ into $\Gamma$ as a CW complex starting at $v_0$, and there is exactly one loop edge attached at each vertex.  

\end{itemize}

We show an example of a graph in canonical form in the following picture:

\begin{figure}[ht]
    \centering
\includegraphics[width=0.5\textwidth]{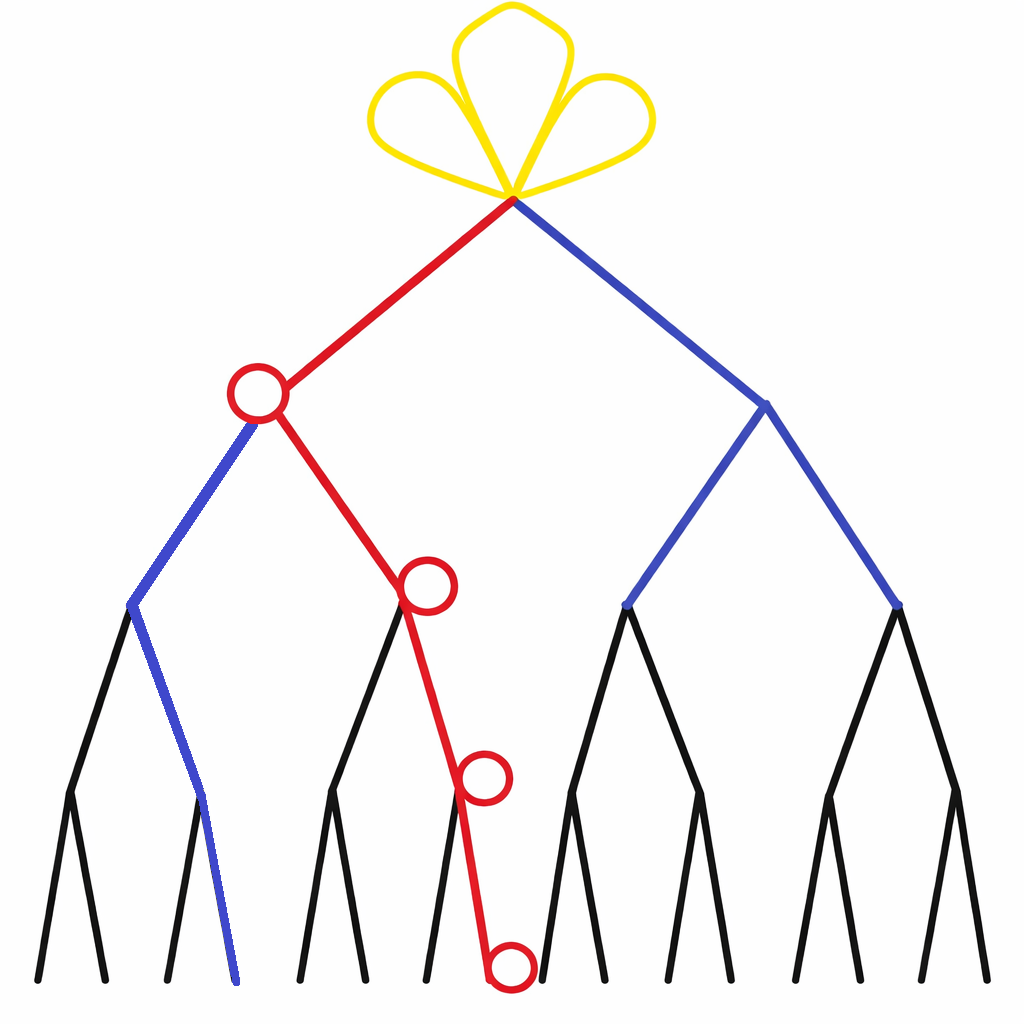}
    \caption{The Cantor tree is pictured in black, and the graph $\Gamma$ consists of the bouquet of circles in yellow, the ends not accumulated by loops in blue, and the ends accumulated by loops in red.}
\end{figure}
\FloatBarrier

 By \cite[Proposition 2.2]{ayala}, any locally finite graph $\Gamma$ is proper homotopy equivalent to a graph in canonical form. Thus, for a graph with finitely many ends in canonical form we have that $\Homeo(E(\Gamma),E_{\ell}(\Gamma))$ is generated by all the end swaps $p_{i,j}$ of ends of the same type. Gathering these propositions we obtain:

\begin{prop}
 \label{thm basis map}   
When $|E(\Gamma)|<\infty$, $\map(\Gamma)$ is topologically generated by the closure of $\map_c(\Gamma)$, all the loop shifts $h$, and  all the maps $p_{i,j}$.
\end{prop}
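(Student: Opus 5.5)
The plan is to combine the two exact sequences recalled above, namely the end-action sequence
$$1\xrightarrow{}\pmap(\Gamma)\xrightarrow{}\map(\Gamma)\xrightarrow{}\Homeo(E(\Gamma),E_{\ell}(\Gamma))\xrightarrow{}1$$
and the splitting $\pmap(\Gamma)=\overline{\map_c(\Gamma)}\rtimes\prod_{\alpha}\Z$ from Theorem \ref{thm basis for pmap}. Let $H\le\map(\Gamma)$ be the closed subgroup topologically generated by $\overline{\map_c(\Gamma)}$, all loop shifts, and all $p_{i,j}$. We want to show $H=\map(\Gamma)$. The argument has two steps: first show $\pmap(\Gamma)\subseteq H$, then show that $H$ surjects onto $\Homeo(E(\Gamma),E_{\ell}(\Gamma))$.

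\emph{Step 1.} When $|E(\Gamma)|<\infty$ we have $|E_{\ell}(\Gamma)|<\infty$, so $\alpha=\max\{0,|E_{\ell}(\Gamma)|-1\}$ is finite. By Theorem \ref{thm basis for pmap}, the factor $\prod_\alpha\Z$ is generated by $\Pi$-images of finitely many commuting loop shifts $h_{\alpha_i}$. Take any $[f]\in\pmap(\Gamma)$. Then $\Pi([f])=\Pi(\prod h_{\alpha_i}^{m_i})$ for suitable integers $m_i$. Exactness gives $[f]\bigl(\prod h_{\alpha_i}^{m_i}\bigr)^{-1}\in\overline{\map_c(\Gamma)}$, and hence $[f]\in H$. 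Since these $h_{\alpha_i}$ are loop shifts in the sense of the Introduction, they lie in $H$. I would check this after putting $\Gamma$ in canonical form via \cite[Proposition 2.2]{ayala}, which is harmless because proper homotopy equivalences conjugate $\map$, $\pmap$ and $\map_c$ to the corresponding subgroups.

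\emph{Step 2.} Since $E(\Gamma)$ is a finite discrete set, $\Homeo(E(\Gamma),E_{\ell}(\Gamma))$ is the product of the symmetric group on $E_{\ell}(\Gamma)$ and the symmetric group on $E(\Gamma)\setminus E_{\ell}(\Gamma)$. Each of these is generated by transpositions. With $\Gamma$ in canonical form, the graph map $p_{i,j}$ is a proper homotopy equivalence that induces exactly the transposition of $e_i$ and $e_j$. Hence the images of the $p_{i,j}$ generate the quotient.

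Now let $[g]\in\map(\Gamma)$. By Step 2 there is a word $w$ in the $p_{i,j}$ inducing the same permutation of ends as $[g]$. Then $w^{-1}[g]\in\pmap(\Gamma)\subseteq H$ by Step 1, so $[g]\in H$.

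The main point requiring care is Step 2's claim that $p_{i,j}$ really is a proper homotopy equivalence acting as a transposition on ends. For ends accumulated by loops, the swap must carry loops to loops and the rays with their attached loops must match vertex by vertex; the canonical form, with exactly one loop per vertex on each such ray, is what guarantees this. The finiteness of $\alpha$ also matters in Step 1, since it avoids any limiting argument on the $\prod_\alpha\Z$ factor.
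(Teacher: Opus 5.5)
Your proposal is correct and follows the same route as the paper. The paper combines the end-action sequence $1\to\pmap(\Gamma)\to\map(\Gamma)\to\Homeo(E(\Gamma),E_{\ell}(\Gamma))\to 1$ with Theorem \ref{thm basis for pmap} and the fact that, for $\Gamma$ in canonical form with finitely many ends, the end swaps $p_{i,j}$ generate $\Homeo(E(\Gamma),E_{\ell}(\Gamma))$; your Steps 1 and 2 spell out these same two ingredients in more detail, namely the finiteness of $\alpha$ and $\Homeo(E(\Gamma),E_{\ell}(\Gamma))$ being a product of two finite symmetric groups generated by transpositions.
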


A thorned graph $\Gamma$ is a compact graph of rank $n$  with $k$ free edges attached to the graph and these free edges have marked endpoints labeled as $\omega_1, \ldots, \omega_{k}$, and $\Gamma$ is homotopy equivalent to $G_{n,k}$ relative to the marked points.  Thus,  there  is an isomorphism $\alpha_{\Gamma}:\map(\Gamma)\xrightarrow[]{} \map(G_{n,k})$, where $n=\text{rk}(\pi_1(\Gamma))$ and $k$ is the number of marked points in $\Gamma.$
 If $\Gamma$ is in standard form, an embedded thorned graph $\Gamma^{\prime}$ in $\Gamma$ looks as follows:

\begin{figure}[ht]
    \centering
    \includegraphics[width=0.9\textwidth]{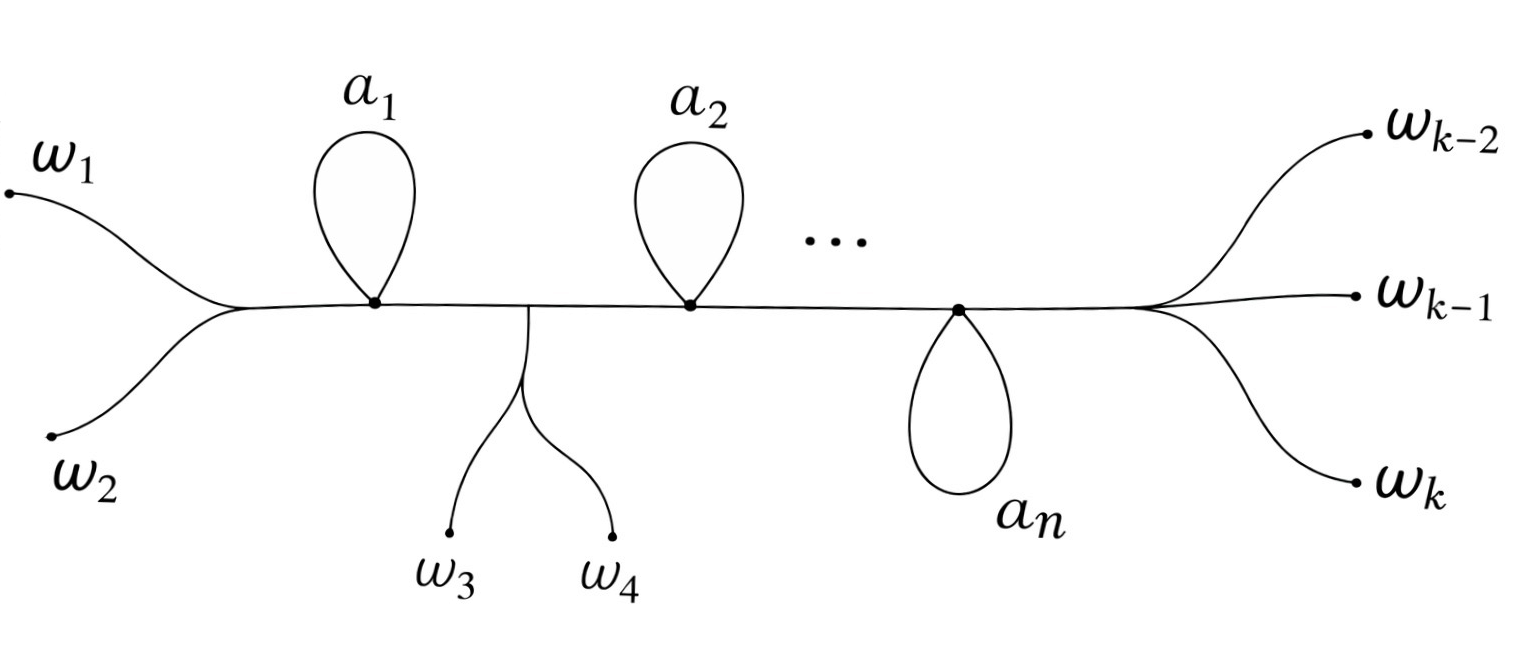}
    \caption{Embedded thorned graph $\Gamma^{\prime}$ in $\Gamma,$ when $\Gamma$ is in standard form.}
\end{figure}
\FloatBarrier

Consider the set of compactly supported homotopy equivalences  $\map_c(\Gamma)$.    
The support of any compactly supported homotopy equivalence is contained in some thorned graph $\Gamma^{\prime}$ embedded in $\Gamma,$  whose marked points are midpoints of edges in $\Gamma.$  A compactly supported homotopy equivalence of $\Gamma$ is then the same as the extension of a homotopy equivalence of an embedded thorned graph $\Gamma^{\prime}$ fixing the marked points, using the identity in the complement of $\Gamma^{\prime}$. 

In \cite{5}, Hatcher--Vogtmann proved that there is a short exact sequence of the form:

$$ 1\xrightarrow{}\pi_1(G_{n,k})\xrightarrow{j}\map(G_{n,k})\xrightarrow{}\map(G_{n,k-1})\xrightarrow{}1,$$ where $j(w)$ is the point push map over $w$, supported on the free edge corresponding to the marked point $\omega_k$. Point push maps in a graph are explicitly defined as word maps in \cite[Definition 2.11]{Domatgenerating}. For convenience, we recall the definition of a word map here.
 Orient and enumerate the loops $\{a_1,...,a_n\}$ in $G_{n,k}$ and recall that $x_0$ is the vertex  of $G_{n,k}$. Enumerate the free edges $\{e_1,...,e_k\}$ in $G_{n,k}$.  Now, identify $\pi_1(G_{n,k}, x_0)$ with $\langle a_1,...,a_n\rangle$.
Let $J_l \subset e_l$ be a connected subset of the $l$-th free edge $e_l \in  G_{n,k}$ and identify $J_l$ with the interval $[0, 1]=[0,1/4]\cup[1/4,3/4]\cup[3/4,1]$. We orient $[0, 1]$ so that 0 is farther from the basepoint. 
Given $w=a^{\epsilon_1}_{i_1}\cdots a^{\epsilon_m}_{i_m}$ we can define a word map, denoted by $\varphi_{(w,J_l)}$, supported on $J_l$ as follows: the interval [0,1/4] is mapped to the  edge $[\omega_l,x_0]$, the interval $[ 1/4,3/4]$ is mapped to the path associated to $w$, and the
interval $[ 3/4, 1]$ is mapped to the edge $[x_0,\omega_l]$. $\varphi_{(w,J_l)}$ is defined to be the identity outside of $J_l$. For example, consider the word map $\varphi_{(a_1,I)}$ as in the picture below:

\begin{figure}[ht]
    \centering
    \includegraphics[width=0.5\textwidth]{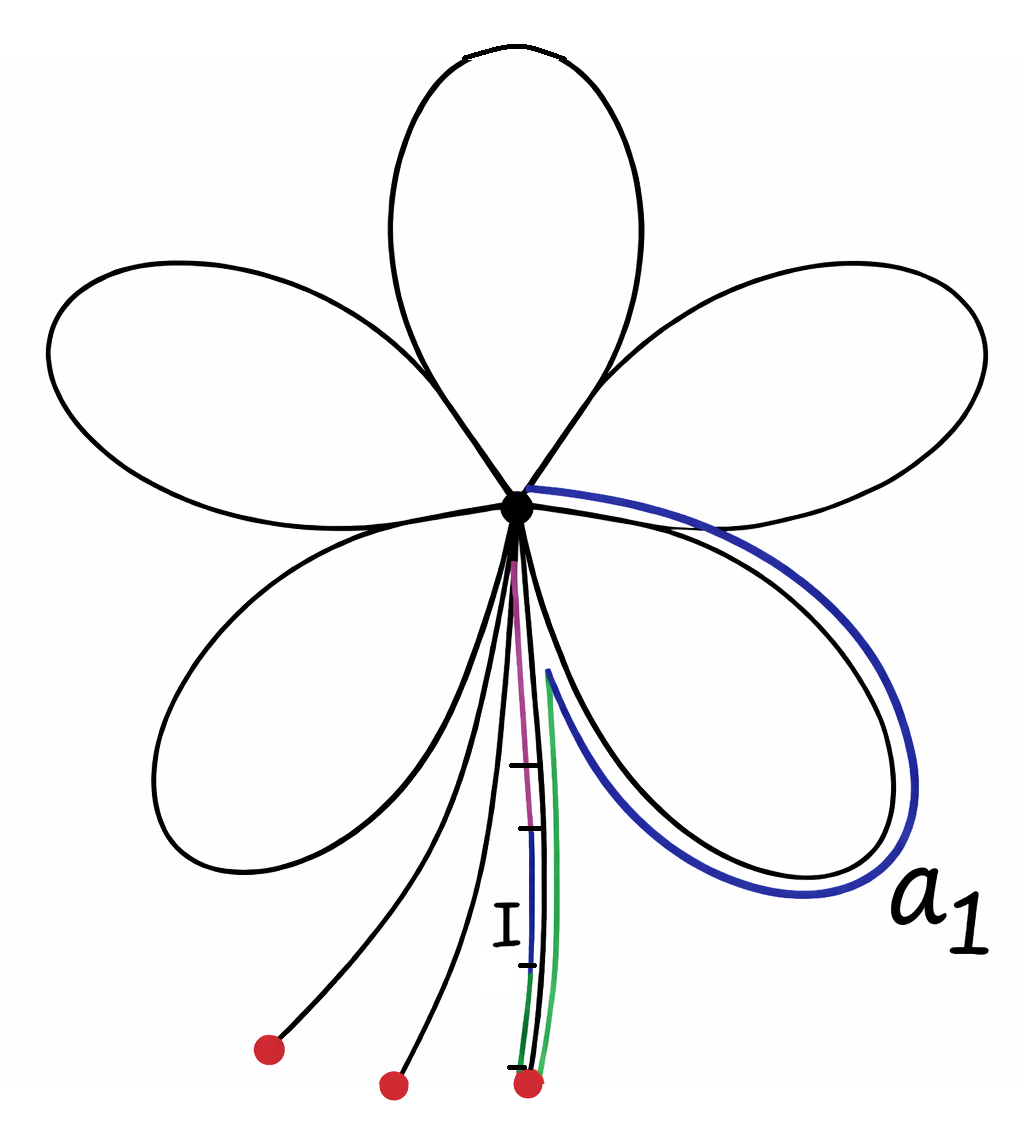}
    \caption{Action of $\varphi_{(a_1, I)}$ on $I$.}
\end{figure}
\FloatBarrier

 It is shown in \cite[Ch. I, Prop. 4.1]{Lyndon} that $\Out(F_n)$ is generated by the classes $[R_{i,j}]$ and $ [I_j]$, for $1\le i,j\le n$ and $i\neq j$, where:

\begin{equation*}
    R_{i,j}(a_k) = \begin{cases}
              a_ka_j & \text{if } k=i\\
               a_k & \text{if } k\neq i\\
               
          \end{cases}, \text{ and }
I_j(a_k) = \begin{cases}
              a_k^{-1} & \text{if } k=j\\
               a_k & \text{if } k\neq j.\\
               
          \end{cases}
\end{equation*}

 Thus, from the Hatcher--Vogtmann short exact sequences, we obtain the following proposition:

\begin{prop}
\label{generating set for map(gamma)}
Let $\Gamma$ be a thorned graph with  $n=\text{rk}(\pi_1(\Gamma))$ and $k$  marked points. Then, $\map(\Gamma)\cong\map(G_{n,k})$ is generated by the equivalence classes of the Nielsen generators $R_{i,j}$, $I_j$,  and by the equivalence classes of the word maps $\varphi_{(a_i,J_l)}$ based on the free edges of $G_{n,k}.$
   
\end{prop}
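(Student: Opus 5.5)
We prove the statement by induction on $k$, using the Hatcher--Vogtmann short exact sequences
$$1\xrightarrow{}\pi_1(G_{n,k})\xrightarrow{j}\map(G_{n,k})\xrightarrow{}\map(G_{n,k-1})\xrightarrow{}1.$$
The first step is the base case $k=0$. Here $G_{n,0}$ is the rose and $\map(G_{n,0})=\Out(F_n)$. By \cite[Ch. I, Prop. 4.1]{Lyndon}, this group is generated by the classes $[R_{i,j}]$ and $[I_j]$ with $1\le i,j\le n$ and $i\neq j$, so the claim holds with no word maps needed. (If one prefers to begin at $k=1$, we use instead that $\map(G_{n,1})\cong \mathrm{Aut}(F_n)$. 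This group is generated by the automorphisms $R_{i,j}$ and $I_j$ as a group, again by Nielsen's theorem in the form quoted from \cite{Lyndon}, and for a single marked point there are no extra generators.)

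For the inductive step, fix $k\ge 1$ and assume that $\map(G_{n,k-1})$ is generated by the classes of $R_{i,j}$, $I_j$ and the word maps $\varphi_{(a_i,J_l)}$ with $l\le k-1$. The forgetful map $\map(G_{n,k})\to\map(G_{n,k-1})$ collapses, or forgets, the $k$-th free edge $e_k$. The Nielsen maps and the word maps supported on $J_l$ with $l\le k-1$ in $G_{n,k}$ are defined by the same formulas and fix $e_k$. Hence they map to the corresponding generators of $\map(G_{n,k-1})$, and so the images of these elements generate the quotient.

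By exactness, it then suffices to show that the kernel $j(\pi_1(G_{n,k},x_0))$ is generated by the word maps $\varphi_{(a_i,J_k)}$. The image $j(w)$ is the point push along $w$ on the free edge $e_k$, which by \cite[Definition 2.11]{Domatgenerating} is precisely the word map $\varphi_{(w,J_k)}$. Since $j$ is a homomorphism, it is enough to check that $w\mapsto[\varphi_{(w,J_k)}]$ is multiplicative, namely
$$[\varphi_{(w,J_k)}][\varphi_{(w',J_k)}]=[\varphi_{(ww',J_k)}],$$
possibly up to reversing the order depending on conventions, and that $[\varphi_{(a_i^{-1},J_k)}]=[\varphi_{(a_i,J_k)}]^{-1}$. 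Once this holds, every $j(w)$ is a product of the $[\varphi_{(a_i,J_k)}]^{\pm1}$.

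The main obstacle is verifying this composition formula for word maps up to homotopy rel marked points. I would argue it directly. The composite sends $J_k$ to the path $[\omega_k,x_0]\,w\,[x_0,\omega_k]$ with the loop $w'$ inserted, or an analogous concatenation. After cancelling the backtracking segments $[x_0,\omega_k][\omega_k,x_0]$ by a homotopy supported on $J_k$ that fixes $\omega_k$ and the rest of the graph, this becomes $\varphi_{(ww',J_k)}$. Alternatively, one can simply invoke that $j$ is a homomorphism and that $j(w)=[\varphi_{(w,J_k)}]$, as in \cite{5,Domatgenerating}. Finally, we combine the generators of the kernel with the lifts of the generators of the quotient, which completes the induction.
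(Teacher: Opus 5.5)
Your proposal is correct and is essentially the paper's argument. You take Nielsen's generators $[R_{i,j}],[I_j]$ of $\Out(F_n)$ from Lyndon--Schupp as the base case and induct on $k$ through the Hatcher--Vogtmann sequences, lifting the generators of $\map(G_{n,k-1})$ and generating the kernel $j(\pi_1(G_{n,k},x_0))$ by the point pushes $[\varphi_{(a_i,J_k)}]$; the composition check you flag as the main obstacle is not really needed, since $j$ is a homomorphism with $j(a_i)=[\varphi_{(a_i,J_k)}]$, so the images of the $a_i$ already generate the kernel.
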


\section{Construction of the maps $ F_{i,j}, G_j, \Phi_{(a_i,J_l)}, H$, and  $P_{i,j}$}\label{sec: construction of diffeomorphisms}

Let $TM$ be the tangent bundle of $M$  and let $\Fr(TM)$ be the principal $\GLR$-bundle of oriented frames of $TM$. Here, for every $x\in M$, $\Fr(TM)_{x}$ is the space of orientation-preserving isomorphisms $T:\R^{3}\xrightarrow{} T_{x}M$.
 An oriented trivialization of $TM_{\Gamma}$ is a continuous section $\sigma: \mg \xrightarrow{}\text{Fr}(T\mg)$ of the bundle $\text{Fr}(T\mg)\xrightarrow{}\mg$. It is stated in \cite[Theorem 1 of Section VII]{Kirby} that all oriented 3-manifolds have a trivialization. We fix a trivialization $\sigma_0$ of $T\mg$. We always assume that when we choose a diffeomorphism from a neighborhood $N^{\prime}$ in $\R^3$ to a neighborhood $N$ in $\mg$, $N^{\prime}$ has the standard trivialization $\sigma^{\prime}_0$ coming from $\R^3$ and that the diffeomorphism will, up to homotopy,  map $\sigma^{\prime}_0$ to $\sigma_0$. Let $\psi:[0,1]\xrightarrow{} [0,1]$ be a smooth function with $\psi(r)=1$ in [0,1/3], $\supp\psi\subseteq [0,2/3)$ and decreasing.

The smooth manifold $\mg$ can be described as the infinite union of the pieces $M_{n_i,k_i}$, where each piece can be described by removing $2n_i+k_i$ open balls from $S^3$, and then gluing the boundary spheres of the $2n_i$ balls in pairs. A picture of one of these pieces is shown below.

\begin{figure}[ht]
    \centering
   
\includegraphics[width=1\textwidth]{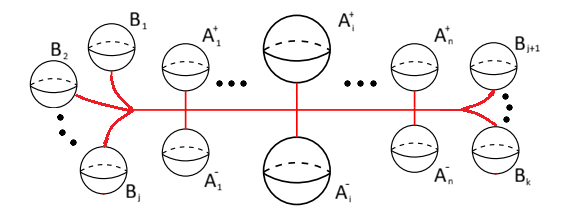}
\caption{Manifold $M_{n,k}$ diffeomorphic to $S^3$ minus $2n+k$ open balls with $2n$ balls glued in pairs.}
\end{figure}
\FloatBarrier



The constructions of $F_{i,j}$ and $ G_j$ were given in \cite{4}, but we recall the definitions here for convenience.  To construct $F_{i,j}$, choose a neighborhood $N_{i,j}$ of $a_j$ diffeomorphic to a solid torus minus an open ball whose boundary is $A_i^{+}$ as in the picture below:

\begin{figure}[ht]
    \centering
  \includegraphics[width=0.65\textwidth]{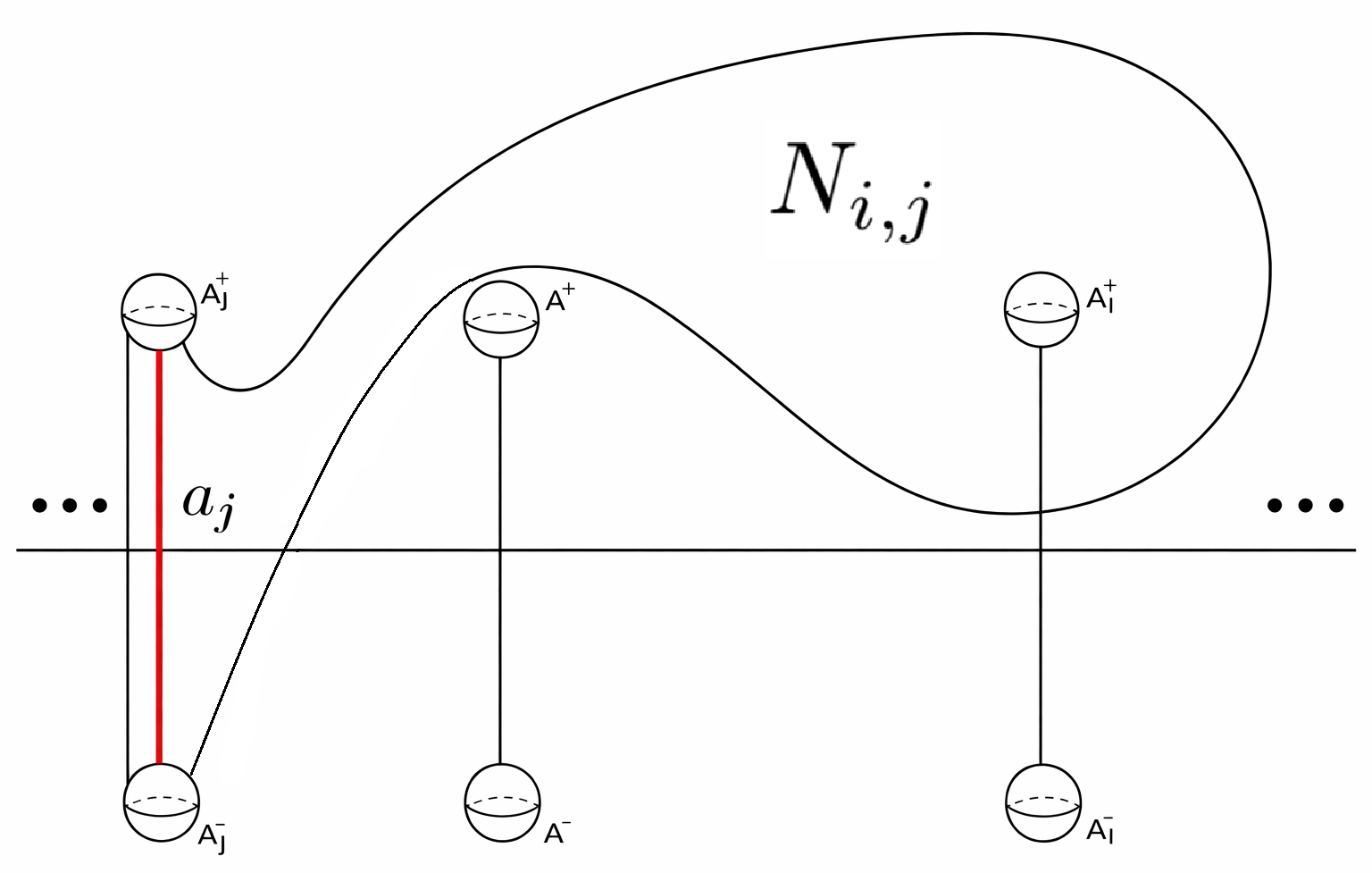}
    \caption{Neighborhood $N_{i,j}$ of $a_j$ in $M_{\Gamma}$.}
    \label{fig:placeholder}
\end{figure}
\FloatBarrier

Choose a diffeomorphism $\phi:N_{i,j}^{\prime}\xrightarrow{}N_{i,j}$ that respects the trivializations, where $N_{i,j}^{\prime}\subseteq\R^3$ is equal to $D^2\times S^1$ minus an open ball with boundary corresponding to  $A_i^{+}$, which is contained in $\{(x,y)|x^2+y^2<1/9\}\times S^1$, where $*\times S^1$ is freely homotopic to $a_j$, as depicted in the picture below:

\begin{figure}[h]
\centering

\begin{tikzpicture}[scale=0.4, transform shape]
\draw[line cap=round,->-=0.76, blue, line width=0.9mm] (-1.95,-9.6) --(-1.95,-0.7);
 \draw [->-=0.67, red, line width=0.9mm] plot [smooth ] coordinates {(-2,-5) (-2.5,-3.5) (-3,0) (-2,2) (0,2.5) (1.8,1.8) (3,-0.5)};
 \draw [->-=0.35, red, line width=0.9mm] plot [smooth ] coordinates {(3,-9.5)(1.8,-11.8) (0,-12.5)  (-2,-12) (-3,-10) (-2.5,-6.5)  (-2,-5) };
\draw (2,-0.5)--(2,-9.5);
\draw[->-=0.8, black!30!green, line width=0.9mm] (-1,-5)--(-4,-5);
  \draw (0,-0.5) circle (2cm)node[ scale=2]{$A_2^{-}$};
  \draw (-2,-0.5) arc (180:360:2 and 0.6);
  \draw (0,-9.5) circle (2cm)node[ scale=2]{$A_2^{+}$};
  \draw (-2,-9.5) arc (180:360:2 and 0.6);
  \draw (0,-5) circle (1cm)node[ scale=1.5]{$A_1^{+}$};
  \draw (-1,-5) arc (180:360:1 and 0.4);
  \draw (-5,-5) circle (1cm)node[ scale=1.5]{$A_1^{-}$};
  \draw (-6,-5) arc (180:360:1 and 0.4);
  \draw (5,-0.5) circle (2cm)node[ scale=2]{$A_3^{-}$};
  \draw (3,-0.5) arc (180:360:2 and 0.6);
  \draw (5,-9.5) circle (2cm)node[ scale=2]{$A_3^{+}$};
  \draw (3,-9.5) arc (180:360:2 and 0.6);
  \fill[fill=black] (-2,-5) circle (7pt);
  \draw (-1,-2.2)--(-1,-7.8);
\draw (1,-2.2)--(1,-7.8);
\end{tikzpicture}

\caption{$a_1, a_2$, and $a_3$ are depicted in green, blue, and red, respectively. }

\end{figure}
\FloatBarrier
Let $f_{i,j}^{\prime}:N_{i,j}^{\prime}\xrightarrow{}N_{i,j}^{\prime}$ be defined by: $$ f_{i,j}^{\prime}(x,y,e^{2\pi i\theta}):= \left(x,y, e^{2\pi \textrm{i}\left[\theta+\psi\left(\sqrt{x^{2}+y^{2}}\right)\right]}\right) .$$

We define  $F_{i,j}$  by:

\[   F_{i,j}(p)=\left\{
\begin{array}{ll}
   \phi\circ f_{i,j}^{\prime}\circ\phi^{-1}(p) & \text{ if } p\in N_{i,j} \\
     p & \text{ otherwise. }\\
      
\end{array}
\right. \]

To construct $G_j$, we select a neighborhood $N_j$ diffeomorphic to an open ball minus two open balls with boundaries $A_j^{+}$ and $A_{j}^{-}$ as in the picture below.

\begin{figure}[h]
\centering

\begin{tikzpicture}[scale=0.4, transform shape]
\draw[red, line width=1.2mm] (-11.5,0) -- (11.5,0);
\draw[red,line width=1.2mm] (-8,4) -- (-8,-4);
\draw [line width=1.2mm] (-8,5) circle (1cm)node[anchor=west, scale=2]{};
\draw[line width=1.2mm] (-8,-5) circle (1cm)node[anchor=west, scale=2]{};
\draw[red, line width=1.2mm] (8,4) -- (8,-4);
\draw[line width=1.2mm] (8,5) circle (1cm)node[anchor=west, scale=2]{};
\draw[line width=1.2mm] (8,-5) circle (1cm)node[anchor=west, scale=2]{};

\draw [blue, line width=1.2mm](0,0) circle (5cm) (4,4) node[black, anchor=west, scale=2]{$N_j$};

\draw [red, line width=1.2mm](0,0) -- (0,1);
\draw [line width=1.2mm] (0,2) circle (1cm) (1,2.5) node[anchor=west, scale=2]{$A_j^{+}$};

\draw [red, line width=1.2mm](0,0) -- (0,-1);
\draw [line width=1.2mm] (0,-2) circle (1cm) (1,-2.5) node[anchor=west, scale=2]{$A_j^{-}$};
\draw (12,0) [fill]circle (0.2cm)node[anchor=west, scale=2]{};
\draw (13,0) [fill]circle (0.2cm)node[anchor=west, scale=2]{};
\draw (14,0) [fill]circle (0.2cm)node[anchor=west, scale=2]{};
\draw (-12,0) [fill]circle (0.2cm)node[anchor=west, scale=2]{};
\draw (-13,0) [fill]circle (0.2cm)node[anchor=west, scale=2]{};
\draw (-14,0) [fill]circle (0.2cm)node[anchor=west, scale=2]{};
\end{tikzpicture}

\caption{The neighborhood $N_j$ is shown in blue.}

\end{figure}
\FloatBarrier

Let $\widetilde{N_j}\subseteq\R^3$ be $B^3=\{(x,y,z)|x^2+y^2+z^2\le 1\}$ minus two open balls with boundary spheres $A_j^{+}$ and $A_j^{-}$, such that these spheres are contained in $\{(x,y,z)|x^2+y^2+z^2<1/9\}$, and they are symmetric with respect to a rotation by $\pi$ around the  $z$-axis. Denote by $N_j^{\prime}$  the space obtained from $\widetilde{N_j}$ with $A_j^{+}$ and $A_j^{-}$ identified  and  choose a diffeomorphism $\phi:N_j^{\prime}\xrightarrow{}N_j$ that respects the trivializations. Parametrize $a_j$ in such a way that $a_j(t)\in \{(x,y,z)|x^2+y^2+z^2<1/9\}$ for $1/3\le t \le 2/3$, and $a_j(t)\in \{(x,y,z)|1/9<x^2+y^2+z^2<1\}$ for $0\le t \le 1/3$ and $2/3\le t\le 1$. We also homotope $a_j$ so that $a_j(t)$ lies on the $z$-axis for $t$ as in the last case. This is depicted in the picture below:

\begin{figure}[ht]
\centering
\begin{tikzpicture}[scale=0.4, transform shape]
\draw[dashed,<-,line width=0.5mm](0,10)--(0,8);
\draw[dashed,->,line width=0.5mm](0,-4)--(0,-8);
\draw[red, line width=0.9mm](0,8)--(0,4) ;

\draw[->-=0.5, red, line width=0.9mm](-2.3,1.7)--(0,4);

\draw[->-=0.6, red, line width=0.9mm](0,4)--(2.3,1.7);

  \draw (-3,1) circle (1cm)node[scale=1.5]{$A_j^{+}$};
  
  \draw (3,1) circle (1cm)node[scale=1.5]{$A_j^{-}$};
  \draw (0,1) circle (5cm);
  \draw (0,1) circle (7cm);
  \draw[](-7,1)arc(180:360:7 and 1.5);
 \fill[fill=black] (0,8) circle (7pt);
  
\end{tikzpicture}
\caption{The loop $a_j$ is depicted in red.}
\end{figure}
 \FloatBarrier

  If $p\in \widetilde{N_j}$ has spherical coordinates $(\theta, \varphi,r)$, define $g_j^{\prime}:\widetilde{N_j}\xrightarrow{}\widetilde{N_j}$ by $g_j^{\prime}(\theta ,\varphi,r):=(\theta +\psi(r)\pi,\varphi,r).$ As $g_j^{\prime}$ respects the identifications of $A_j^{+}$ and $A_j^{-}$, it induces a diffeomorphism on $N_j^{\prime}$ which we still denote by $g_j^{\prime}$.

In \cite{4}, we proved that
 
 \[   G_j(p)=\left\{
\begin{array}{ll}
     \phi\circ g_j^{\prime}\circ \phi^{-1}(p)& \text{ if } p\in N_j  \\
     p & \text{ otherwise, }\\
      
\end{array} 
\right. \]  lifts the inversion $I_j.$

The construction of the  diffeomorphism $\Phi_{(a_i,J_l)}$ lifting $\varphi_{(a_i,J_l)}$ is similar to the construction of $F_{i,j}$. To begin,  select a neighborhood of $a_i,$ denoted by $N,$ diffeomorphic to a  solid torus minus an open ball with boundary $B_l$  as in the picture below:

\begin{figure}[ht]
    \centering
   \includegraphics[width=0.7\textwidth]{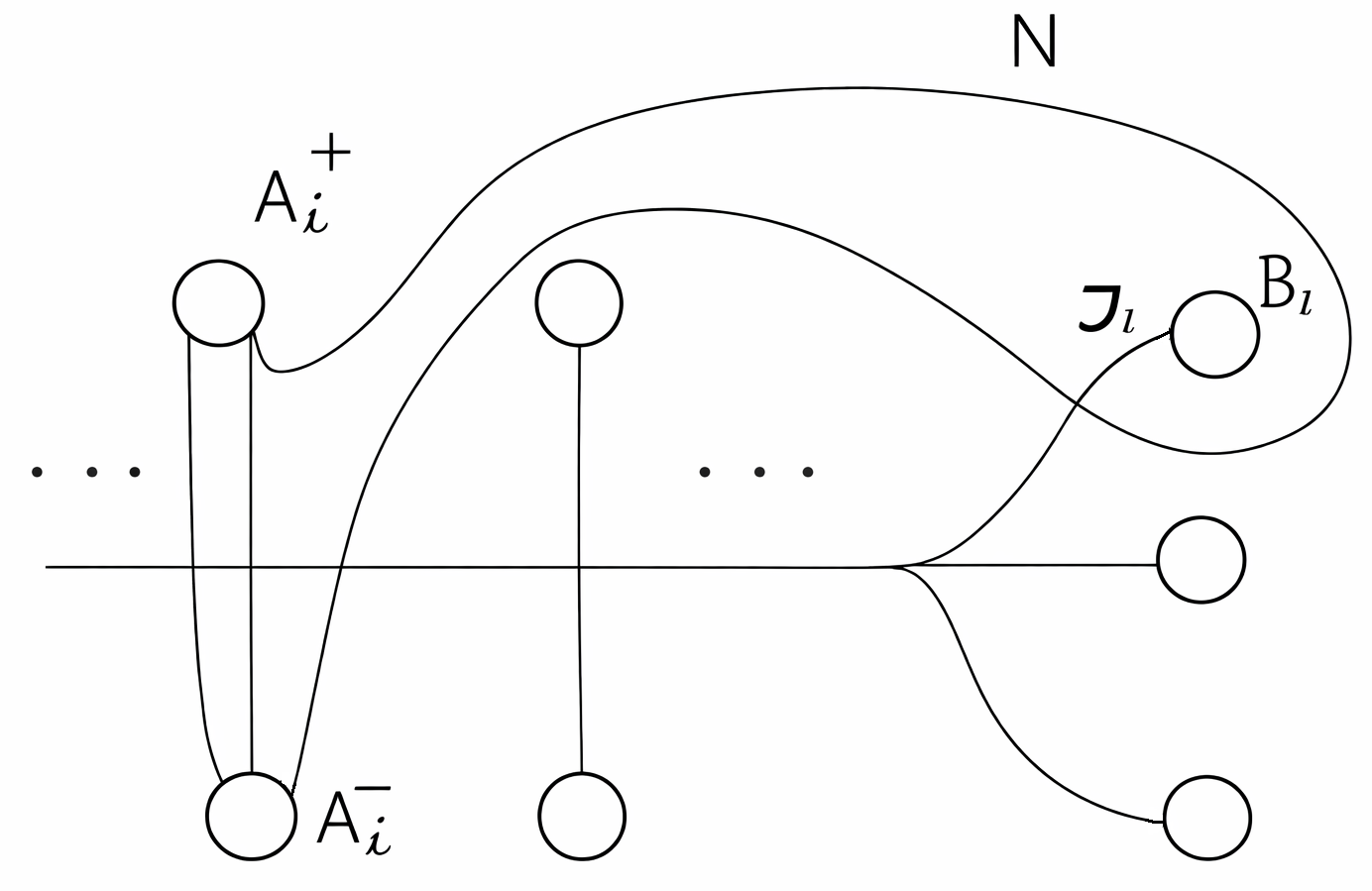}
    \caption{Neighborhood $N$ of $a_i$ in $M_{\Gamma}$.}
\end{figure}
\FloatBarrier

Let $J_l$  be the subinterval of the $l$-th free edge, with endpoints in $B_l$ and $\partial N$.
Choose a diffeomorphism $\phi:N^{\prime}\xrightarrow{}N$ that respects the trivializations, where $N^{\prime}\subseteq\R^3$ is equal to
$D^2 \times S^1$ minus an open ball with boundary $B_l$, which is contained in $\{(x, y)|x^2 + y^2 < 1/9\} \times S^1$, where
$* \times S^1$ is freely homotopic to $a_i$.   This is depicted in the following picture: 


\begin{figure}[h]
\centering

\begin{tikzpicture}[scale=0.45, transform shape]

 \draw (-2,-0.5)--(-2,-9.5);
\draw (2,-0.5)--(2,-9.5);
\draw (-1,-2.2)--(-1,-7.8);
\draw (1,-2.2)--(1,-7.8);
\draw[ blue, line width=1.3mm] (-1,-5)--(-2,-5) (-1.4,-6)node[black, anchor=south, scale=1.5]{$J_l$};
  \draw (0,-0.5) circle (2cm)(1.2,1.4)node[anchor=west, scale=2]{$A_i^{+}$};
  \draw (-2,-0.5) arc (180:360:2 and 0.6);
  \draw (0,-9.5) circle (2cm)(1.2,-11.2)node[anchor=west, scale=2]{$A_i^{-}$};
  \draw (-2,-9.5) arc (180:360:2 and 0.6);
  \draw (0,-5) circle (1cm)(0,-3.5)node[anchor=west, scale=1.5]{$B_l$};
  \draw (-1,-5) arc (180:360:1 and 0.4);
  
  \fill[fill=black] (-2,-5) circle (7pt);
\end{tikzpicture}

\caption{The interval $J_l$ is shown in blue.}

\end{figure}
\FloatBarrier

Let $ \phi_{(a_i,J_l)}^{\prime}:N^{\prime}\xrightarrow{}N^{\prime}$ be defined by:

$$ \phi_{(a_i,J_l)}^{\prime}(x,y,e^{2\pi i \theta}):=\left(x,y, e^{2\pi \textrm{i}\left[\theta+\psi\left(\sqrt{x^{2}+y^{2}}\right)\right]}\right).$$

Let $\Phi_{(a_i,J_l)}$ be the map defined  by:

\[   \Phi_{(a_i,J_l)}(p)=\left\{
\begin{array}{ll}
   \phi\circ   \phi_{(a_i,J_l)}^{\prime}\circ \phi^{-1}(p)  & \text{ if } p\in N \\
     p & \text{otherwise.} \\
      
\end{array}
\right. \]  We claim that this map wraps $J_l$ around $a_i$ as shown in the picture below.



\begin{figure}[h]
\centering

\begin{tikzpicture}[scale=0.5, transform shape]

 \draw (-2,-0.5)--(-2,-9.5);
\draw (2,-0.5)--(2,-9.5);
\draw[ blue, line width=1.3mm] (-1,-5)--(-2,-5) (-1.4,-6)node[black, anchor=south, scale=1.5]{};
  \draw (0,-0.5) circle (2cm)(1.2,1.4)node[anchor=west, scale=2]{$A_i^{+}$};
  \draw (-2,-0.5) arc (180:360:2 and 0.6);
  \draw (0,-9.5) circle (2cm)(1.2,-11.2)node[anchor=west, scale=2]{$A_i^{-}$};
  \draw (-2,-9.5) arc (180:360:2 and 0.6);
  \draw (0,-5) circle (1cm)(0,-3.5)node[anchor=west, scale=1.5]{$B_l$};
  \draw (-1,-5) arc (180:360:1 and 0.4);
  \draw (-1,-2.2)--(-1,-7.8);
\draw (1,-2.2)--(1,-7.8);
  \fill[fill=black] (-2,-5) circle (7pt);

  \draw[->-=0.7, red, line width=0.8mm] plot [smooth ] coordinates {(-1.5,-8.2) (-1.5, -6) (-1.6,-5.5) (-2,-5) };
\draw[->-=0.5,red, line width=0.8mm] plot [smooth ] coordinates {(-1,-5) (-1.5, -4) (-1.5,-1.8)};
\end{tikzpicture}

\caption{The interval $J_l$  shown in blue  is sent  by $\Phi_{(a_i, J_l)}$ to the red  arc which is homotopic relative to its endpoints to the path that runs along $J_l$, once around $a_i$, and back along $J_l$.}

\end{figure}
\FloatBarrier

Thus, $\Phi_{(a_i,J_l)}$ lifts $\varphi_{(a_i,J_l)}$.

In order to construct $H$, let $e_1, e_2, e$ and $h$ be as in Section 1. For each piece $M_{e(k)},$ consider the boundary components $B_{k}^{-}$ and $B_k^{+}$ of $M_{e(k)}$ corresponding to $[e(k-1), e(k)]$ and $[e(k),e(k+1)]$. Consider the solid cylinder $N_k$ in $M_{e(k)}$ with boundary $B_{k}^{-}$ and $B_k^{+}$ that contains the identified spheres $A_{k}^{+}$ and $A_k^{-}$ corresponding to $a_k$, as in the picture below.

\begin{figure}[h]
    \centering
    \includegraphics[width=0.65\textwidth]{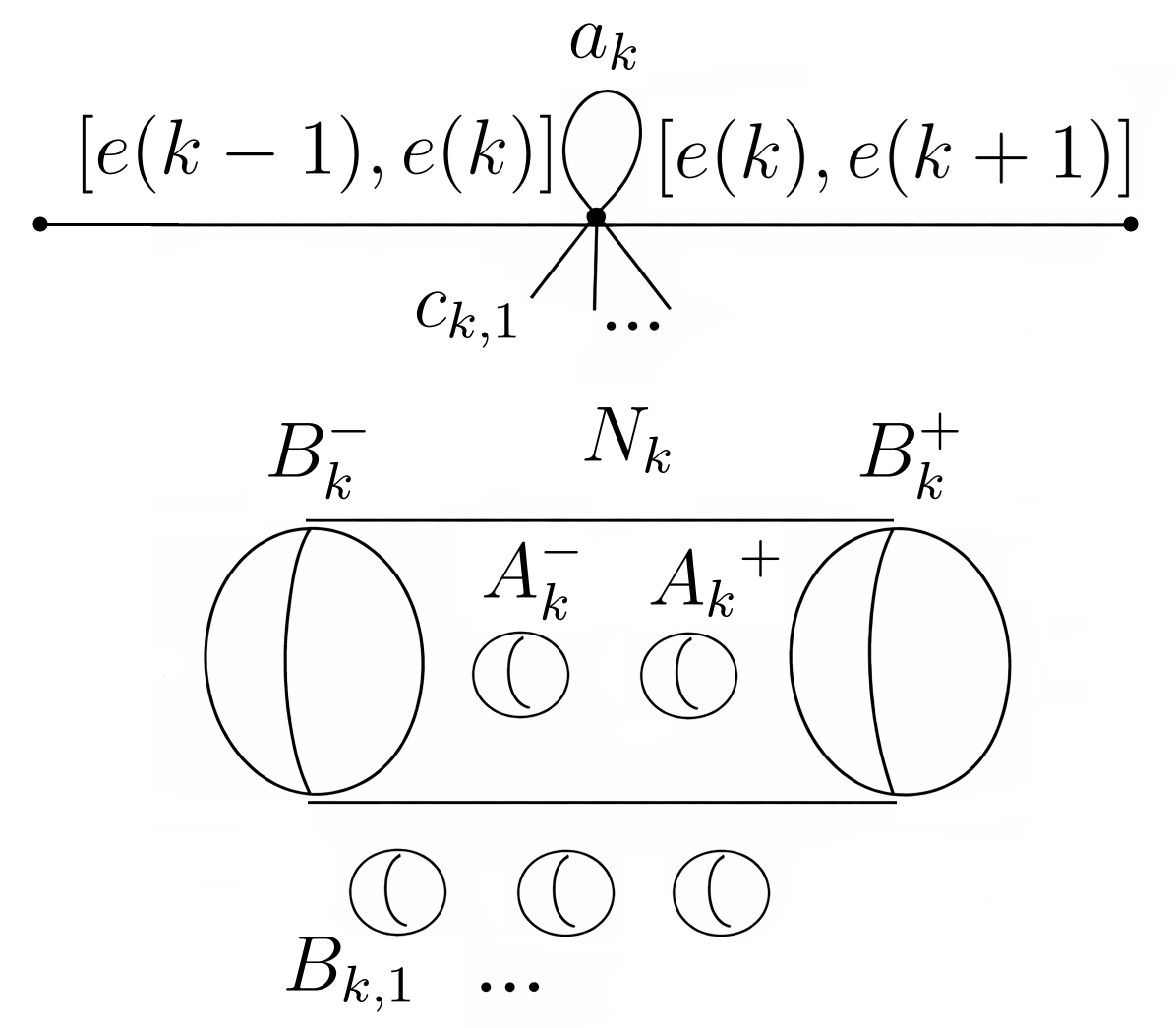}
    \caption{Neighborhood $N_k$ diffeomorphic to a solid cylinder with boundary components $B_k^{-}$ and $B_k^{+}.$}
    \label{fig:placeholder}
\end{figure}
\FloatBarrier

Consider the neighborhood $N$ of $e(\R)$ defined as the union of all the $N_k$, where $B_k^{+}$ is identified with $B_{k+1}^{-}$. Find a diffeomorphism $\phi:N^{\prime}\xrightarrow{}N$ that respects the trivializations, where:  $$N^{\prime}=\{(x,y,z)| y^2+z^2\le 4\}-\{A^{-}_k,A^{+}_k\}_{k\in\Z}\subseteq\R^{3},$$ where  $A^{-}_k:=\{(x,y,z)|(x-2(2k))^2+y^2+z^2<1/16\}$ and $A^{+}_k:=\{(x,y,z)|(x-2(2k+1))^2+y^2+z^2<1/16\}$. $N^{\prime}$ is depicted in the picture below: 

\begin{figure}[h]
    \centering
    \includegraphics[width=0.9\textwidth]{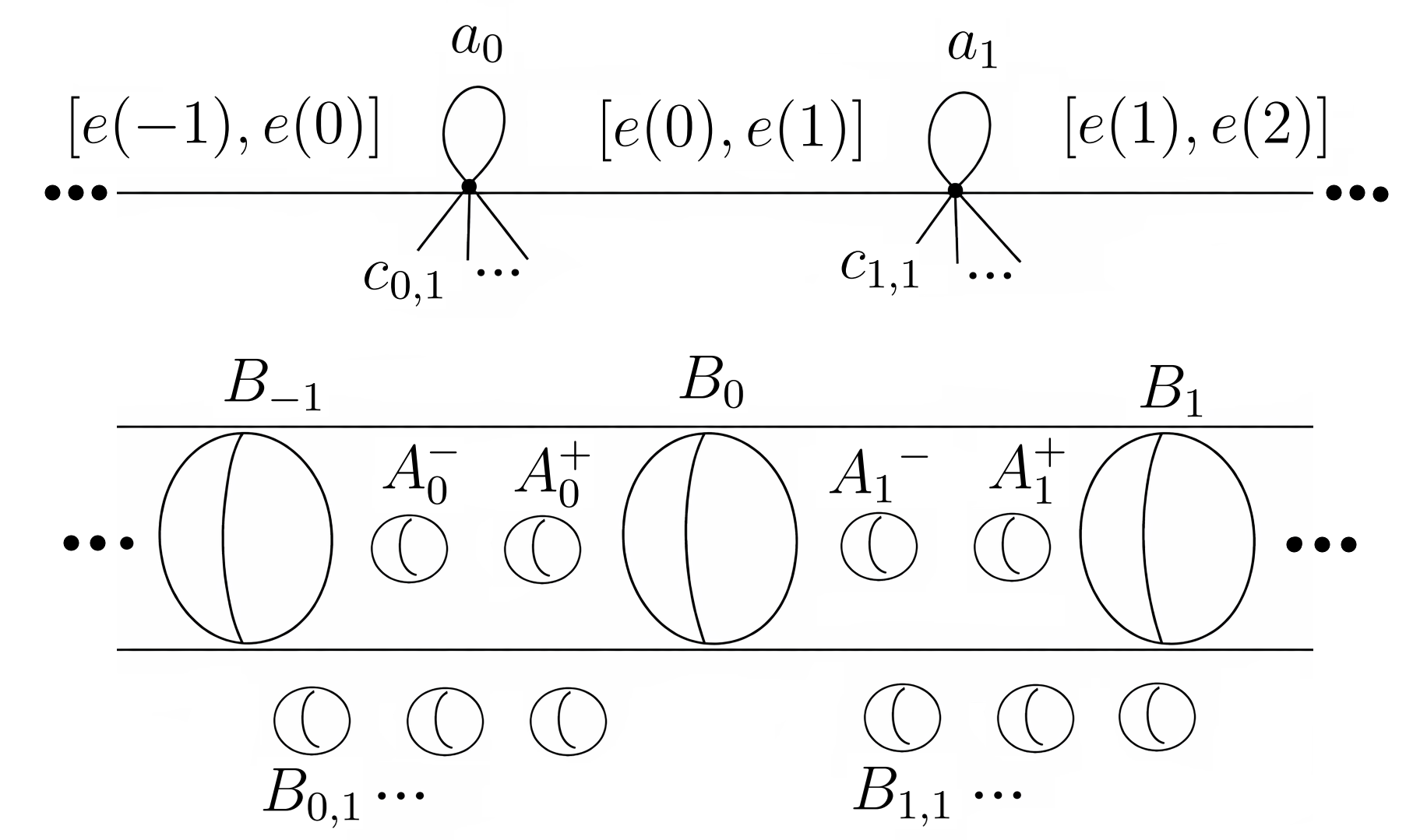}
    \caption{An embedded graph with one loop attached at each vertex and two ends accumulated by loops is shown above. The associated neighborhood $N^{\prime}$ is equal to $D^2\times \R$ minus  an infinite number of open balls $\{A^{+}_k, A^{-}_k\}_{k\in\Z}$, where $A^{+}_k$ is identified with $A^{-}_{k}$.  }
    \label{fig:placeholder}
\end{figure}
\FloatBarrier

Construct a smooth  function $\psi:[0,2]\xrightarrow{} [0,1]$ with $\psi(r)=1$ on [0,1], $\supp(\psi)\subseteq [0,5/4)$, and decreasing, so  that $\psi^{\prime}(r)\le 0$. Denote by $h^{\prime}:N^{\prime}\xrightarrow{}N^{\prime}$ the diffeomorphism given by: 
$$h^{\prime}(x,y,z)=(x+4\psi(\sqrt{y^2+z^2}),y,z).$$

Let $H$ be the map defined by:

 \[   H(p)=\left\{
\begin{array}{ll}
       \phi\circ h^{\prime}\circ\phi^{-1}(p) & \text{ if } p\in N  \\
     p & \text{ otherwise. } \\
      
\end{array} 
\right. \]

It is easy to see that $H$ is a diffeomorphism that lifts $h$.

The setup for constructing the  diffeomorphism $P_{i,j}$ lifting $p_{i,j}$ is analogous to the setup used in the construction of $G_j$. It suffices to consider the case where $p_{i,j}$ swaps the ends accumulated by loops, as the other case can be treated more easily. To begin, choose a neighborhood $N$ of $e_i \cup e_j$ containing neighborhoods $N_i$ and $N_j$ of the two ends, each one diffeomorphic to the connected sum of infinitely many copies of $ S^2\times S^1$ minus a ball, as in the picture below:

\begin{figure}[h]
    \centering
   \includegraphics[width=0.45\textwidth]{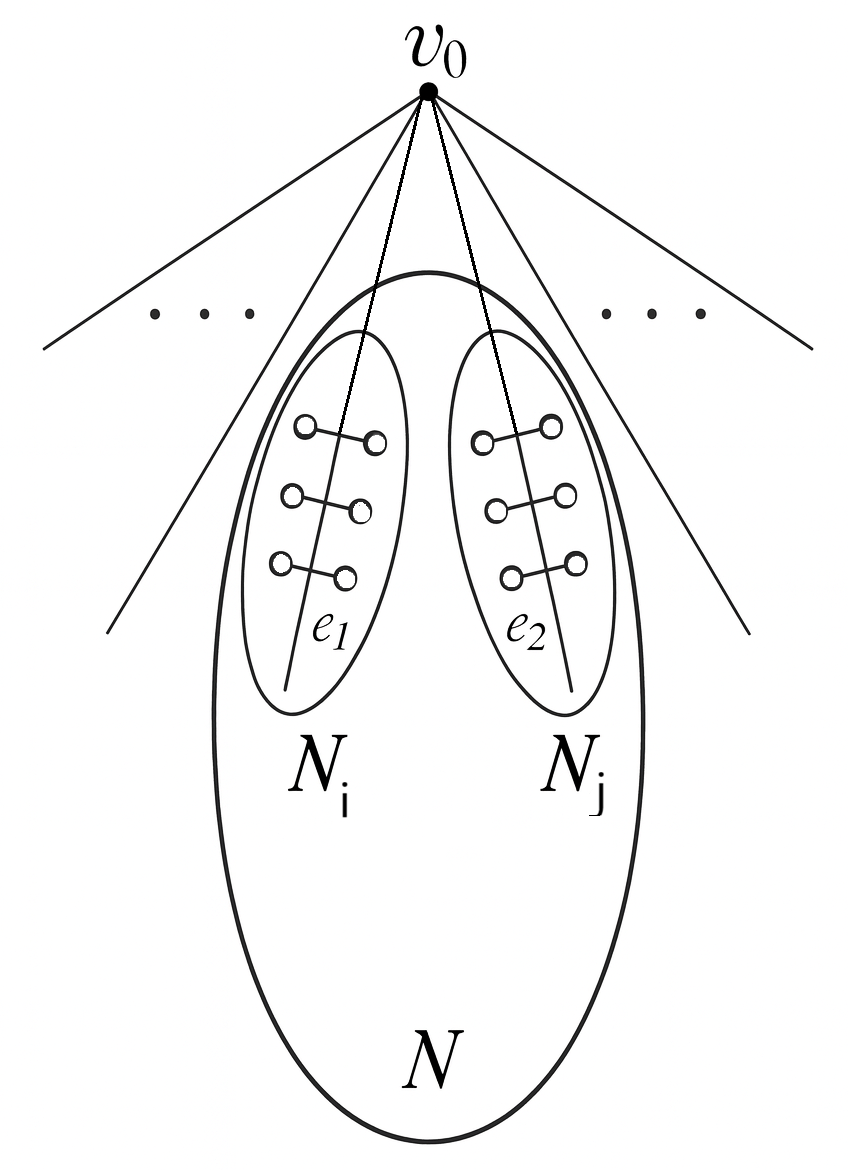}
    \caption{ Neighborhood $N$ of $e_i\cup e_j$ in $M_{\Gamma}$.}
    \label{fig:placeholder}
\end{figure}
\FloatBarrier

   Let $B^3=\{(x,y,z)\mid x^2+y^2+z^2\le 1\}$, and let $D_i,D_j\subset B^3$ be
disjoint open balls contained in $\{(x,y,z)\mid x^2+y^2+z^2<1/9\}$ that are
exchanged by the rotation by $\pi$ about the $z$-axis. Let $N'$ be the space
obtained from $B^3\setminus(D_i\cup D_j)$ by gluing copies $N'_i$ and $N'_j$
of $N_i$ and $N_j$ along $\partial D_i$ and $\partial D_j$, as depicted in the
picture below.


\begin{figure}[h]
\centering
    
\begin{tikzpicture}[scale=0.5, transform shape]

 \draw[dashed,<-,line width=0.5mm](0,10)--(0,8);\draw[dashed,->,line width=0.5mm](0,-6)--(0,-8);



  \draw (-3,1) circle (1cm)node[scale=1.5]{$N_i^{\prime}$};
  
  \draw (3,1) circle (1cm)node[scale=1.5]{$N_j^{\prime}$};
 \draw (0,1) circle (5cm)(0,3)node[scale=3.5]{$N^{\prime}$};
  \draw (0,1) circle (7cm); 
  \draw[](-7,1)arc(180:360:7 and 1.5);

   \fill[fill=black] (0,8) circle (7pt);
\end{tikzpicture}

\caption{The model $N^{\prime}$: the unit ball with $N_i^{\prime}$ and $N_j^{\prime}$ glued in, exchanged by the rotation by $\pi$ about the $z$-axis}.
\end{figure}
\FloatBarrier
 Choose a diffeomorphism $\phi\colon N'\to N$ that respects the
trivializations and sends $N'_i$ and $N'_j$ to $N_i$ and $N_j$. Choose a diffeomorphism $\nu:N_i^{\prime}\xrightarrow{}N_{j}^{\prime}$ that is compatible with the rotation by $\pi$ on a collar neighborhood of $\partial N_i^{\prime}\cup \partial N_j^{\prime}$ and that preserves the trivialization up to homotopy. If $p\in N^{\prime} -(N_i^{\prime}\cup N_j^{\prime})$ has spherical coordinates $(\theta, \varphi,r)$, let $p_{i,j}^{\prime}:N^{\prime}\xrightarrow{}N^{\prime}$ be the map defined by: 

\[   p_{i,j}^{\prime}(p)=\left\{
\begin{array}{ll} 
   \nu(p) & \text{if } p\in N_i^{\prime}\\
   \nu^{-1}(p) & \text{if } p\in N_j^{\prime}\\
       (\theta+\psi(r)\pi, \varphi, r)&  \text{if }p\in N^{\prime} -(N_i^{\prime}\cup N_j^{\prime}).  \\

\end{array} 
\right. \]

 Then, the map $P_{i,j}$  defined by:
 \[   P_{i,j}(p)=\left\{
\begin{array}{ll} 
   \phi\circ p_{i,j}^{\prime}\circ\phi^{-1}(p) & \text{if } p\in N\\
  
     p & \text{otherwise,} \\
      
\end{array}
\right. \]   lifts the end swap $p_{i,j}.$

\section{Twisting crossed homomorphism for a manifold with boundary}\label{sec:Twisting crossed}

Consider again the short exact sequence:

$$ 1\xrightarrow{} \ts(M_{n,k})\xrightarrow{}\map(M_{n,k})\xrightarrow{\rho_{n,k}}\map({{G}_{n,k}})\xrightarrow{}1.$$ Notice that when $k=0$, we get Laudenbach's short exact sequence:

 $$1 \xrightarrow{} \ts(M_n)\xrightarrow{} \map(M_n)
\xrightarrow{\rho} \Out(F_n) \xrightarrow{}1.$$

We will prove that the Hatcher–Vogtmann sequence splits for $k \ge 1$ and provide a section $s_{n,k}:\map({G_{n,k}})\xrightarrow{}\map(M_{n,k})$ for this short exact sequence. In order to do this, we study the derivative  and twisting crossed homomorphisms introduced by Brendle--Broaddus--Putman in \cite{3},  in the case where the manifold has a boundary. 
We first recall the closed case from \cite{3}. Let $M$ be a closed oriented 3-manifold.
We will now define the derivative crossed homomorphism $$\mathcal{D}: \diff(M)\xrightarrow{} C(M,\GLR).$$

Given  $F\in\diff(M)$, the derivative crossed homomorphism evaluated at $[F]$, $\mathcal{D}([F]):M\xrightarrow{}\GLR$, gives for each $p$ a linear transformation $\mathcal{D}([F])(p)$ in $\GLR$, obtained  as follows.  It is the unique linear transformation that makes the following diagram commute:

$$ $$

\[ \begin{tikzcd}[transform canvas={scale=1.3}, row sep=25,column sep = 40]
\R^{3} \arrow{r}{\sigma_0(p)} \arrow[swap]{d}{\mathcal{D} ([F])(p)} & T_pM \\
\R^{3} \arrow{r}{\sigma_0(F(p))}& T_{F(p)}M \arrow{u}[swap] {[DF^{-1}]_{F(p)}}
\end{tikzcd}
\]

$$ $$

 Thus, $\mathcal{D}([F])(p)$ is the inverse of the linear transformation that represents the change of basis transformation from the basis

$$\{\sigma_0(p)(e_1), \sigma_0(p)(e_2),\sigma_0(p)(e_3)\}$$ of $T_{p}M$ to the basis  

$$\{DF^{-1}(\sigma_0(F(p))(e_1)), DF^{-1}(\sigma_0(F(p))(e_2)), DF^{-1}(\sigma_0(F(p))(e_3))\}$$  of $T_pM$, as depicted in the figure below.

\begin{figure}[hbt!]
\centering
\begin{tikzpicture}[scale=0.8, transform shape]
    \draw[->,line width=1mm](-3,0)--(-5,-2);
    \draw[->, line width=1mm](-3,0)--(-3,3);
    \draw[->, line width=1mm](-3,0)--(0,0);
     \draw[->,line width=0.4mm](0.7,0)--(5,0);
\draw[->,line width=1mm](8,0)--(6,-2);
    \draw[->,line width=1mm](8,0)--(8,3);
    \draw[->,line width=1mm](8,0)--(11,0);
\draw[->,line width=1mm, blue](8,0)--(10.12,2.12);
\draw[->,line width=1mm, blue](8,0)--(6.12,2.12);
\draw[->,line width=1mm, blue](8,0)--(8,-2.6);
\draw (-2,-1) node[scale=1.5]{$F(p)$};
 \draw (9,-1) node[scale=1.5]{$p$};
\draw (3.2,0.7) node[scale=1.5]{$DF^{-1}$};
\end{tikzpicture}
\caption{The bases at $F(p)$ and $p$ that are determined by $\sigma_0$ are depicted in black. The  basis at $F(p)$  is sent to the basis in blue at $p$ by $DF^{-1}$. $\mathcal{D}([F])^{-1}(p)$ is the change of basis between the blue and black bases at $p$ }.
\end{figure}
\FloatBarrier
Therefore, we get:
$$\mathcal{D}([F])^{-1}(p)=\sigma_0^{-1}(p)[DF^{-1}]_{F(p)}\sigma_0(F(p)).$$

Now, we define the twisting crossed homomorphism: $$\mathfrak{T}:\map(M)\xrightarrow{}\Hom(\pi_1(M,x_0), \pi_1(\GL^{+}(3,\R), \mathrm{Id}))=H^{1}(M, \Z_2).$$ The twisting crossed homomorphism evaluated at $[F]$, $\mathfrak{T}([F]):\pi_1(M,x_0)\xrightarrow{}\pi_1(\GL^{+}(3,\R), \mathrm{Id})$,   is the homomorphism that sends $[\gamma]\in \pi_1(M,x_0)$ to the class $\mathfrak{T}([F])[\gamma]=[\mathcal{D}([F])(\gamma)]$, where  $\mathcal{D}([F])(\gamma)$ is the loop:

$$[0,1]\xrightarrow{\gamma}M\xrightarrow{\mathcal{D}([F])}\GL^{+}(3,\R). $$ In other words, $\mathfrak{T}([F])$ is the map induced by $\mathcal{D}([F])$ on fundamental groups.

Let $M$ be a manifold with nonempty boundary. Let $F:M\xrightarrow{}M$ be a diffeomorphism  that is the identity on the boundary. Since $M-\partial M$ is a smooth manifold, we get a derivative crossed homomorphism: 
$$\mathcal{D}:\diff(M-\partial M)\xrightarrow{}C(M-\partial M,\GLR).$$
Isotope $F$ such that it is the identity in a normal neighborhood $N(\partial M)$ of $\partial M.$ Then $\mathcal{D}$ extends to a map: 
$$\mathcal{D}:\diff(M,\partial M)\xrightarrow{}C((M,\partial M),(\GLR,\mathrm{Id})).$$ 
We call this map the relative derivative crossed homomorphism, which descends to a map: 

$$\mathcal{D}:\map(M)\xrightarrow{}[(M,\partial M),(\GLR,\mathrm{Id})].$$


    

We now use the lemma below to construct the relative twisting crossed homomorphism.

\begin{lema}\label{lem:functor}
Let $\mathcal{F}$ be a functor from the category of pairs of topological spaces to the category $Gr$ of groups satisfying: 

\begin{itemize}
    \item  Homotopy invariance: $\mathcal{F}(f)$ depends on the homotopy class of $f$, so we get a map: $\mathcal{F}: [ (M, \partial M),  (\GLR, \mathrm{Id})]\xrightarrow{}\Hom(\mathcal{F}(M, \partial M),\mathcal{F}(\GLR, \mathrm{Id})) $.

    \item  Compatibility with pointwise products: For maps $g, h: (M, \partial M) \xrightarrow{} (\GLR, \mathrm{Id})$, let $g\cdot h$ be the pointwise product $p\xrightarrow{} g(p)h(p)$. Then $\mathcal{F}(g\cdot h) = \mathcal{F}(g) · \mathcal{F}(h)$.
\end{itemize}

 Then, the map $$ \mathcal{F}\circ\mathcal{D}: \map(M)\xrightarrow{}\Hom(\mathcal{F}(M,\partial M),\mathcal{F}(\GLR,\mathrm{Id}))$$
  is a crossed homomorphism. 

    \begin{proof}
        Since $\mathcal{D}:\map(M)\xrightarrow{}[(M,\partial M),(\GLR,\mathrm{Id})]$ is a crossed homomorphism, we have that $\mathcal{D}(fg) = \mathcal{D}(f)^g\cdot \mathcal{D}(g)$, where $\mathcal{D}(f)^g=\mathcal{D}(f)\circ g$. Applying  $\mathcal{F}$ we get $\mathcal{F}(\mathcal{D}(fg)) = \mathcal{F}(\mathcal{D}(f) \circ g) \cdot \mathcal{F}(\mathcal{D}(g))$ because $\mathcal{F}$ is compatible with products. Functoriality gives $\mathcal{F}(\mathcal{D}(f) \circ g) = \mathcal{F}(\mathcal{D}(f)) \circ \mathcal{F}(g)=((\mathcal{F}\circ \mathcal{D})(f))^{\mathcal{F}(g)}.$ Thus, $\mathcal{F}\circ\mathcal{D}(fg)=((\mathcal{F}\circ \mathcal{D})(f))^{\mathcal{F}(g)}\cdot ((\mathcal{F}\circ\mathcal{D})(g))$.
    \end{proof}
\end{lema}

We note that the functor $H_1$ is compatible with pointwise products because $\GLR$ is a topological group. Thus,  we get a well-defined map

$$[(M,\partial M),(\GLR,\mathrm{Id})]\xrightarrow{}\Hom(H_1(M,\partial M),H_1(\GLR,\mathrm{Id}))\cong$$
$$\Hom(H_1(M,\partial M),\Z_2),$$ and hence a map

$$\tsh:\map(M)\xrightarrow{}\Hom(H_1(M,\partial M),\Z_2)\cong H^1(M,\partial M).$$

 We call this map the relative twisting crossed homomorphism. In the case where $M=M_{n,k}$, we denote $\tsh$ by $\tsh_{n,k}$.

 \begin{remark}
When $\partial M=\emptyset$, Lemma~\ref{lem:functor} with $\mathcal{F}=H_1$ gives a crossed
homomorphism $\mathrm{Map}(M)\to\mathrm{Hom}(H_1(M),\mathbb{Z}_2)=H^1(M)$.
Since $\pi_1(GL^+(3,\mathbb{R}),\mathrm{Id})\cong\mathbb{Z}_2$ is abelian, every
homomorphism $\pi_1(M,x_0)\to\mathbb{Z}_2$ factors through $H_1(M)$, so this
map coincides with the twisting crossed homomorphism $\tsh$ defined
above. Thus the relative twisting crossed homomorphism extends $\tsh$
to manifolds with boundary.
\end{remark}

We now develop the tools needed to prove that $\ts(M_{n,k})$ is isomorphic to $H^1(M_{n,k},\partial M_{n,k})$.

\begin{prop}\label{prop generating set of H(Mnk)}

Let $k\ge 1$. Then we have:
$$H^1(M_{{n,k}},\partial M_{{n,k}})\cong\Z_2^{n+k-1}.$$
    
\begin{proof}

Consider the long exact sequence in relative homology for $M_{n,k}$:

   \[\begin{multlined}\cdots\xrightarrow{} H_1(\partial M_{{n,k}})\xrightarrow{}H_1(M_{{n,k}})\xrightarrow{}H_1(M_{{n,k}},\partial M_{{n,k}})\xrightarrow{}\\\widetilde{H}_0(\partial M_{{n,k}})\xrightarrow{}\widetilde{H}_0(M_{{n,k}})\xrightarrow{}\cdots.\end{multlined}\]

Since $H_1(\partial M_{{n,k}})\cong 0, H_1(M_{{n,k}})\cong \Z_2^n, \widetilde{H}_0( \partial M_{{n,k}})\cong \Z_2^{k-1}$ and $\widetilde{H}_0(M_{{n,k}})\cong 0$,  we obtain that $H_1(M_{{n,k}},\partial M_{{n,k}})\cong \Z_2^{n+k-1}$.
Then, $$H^1(M_{{n,k}},\partial M_{{n,k}})=\Hom(H_1(M_{{n,k}},\partial M_{{n,k}}),\Z_2)\cong \Z_2^{n+k-1}.$$
\end{proof}
\end{prop} 
A basis for $H_1(M_{{n,k}},\partial M_{{n,k}})$ consists of the images of the generators $a_i$ of the fundamental group of $M_{n,k}$ for  $1\le i\le n$, and a choice of $k-1$ independent relative cycles. A standard choice of relative cycles in the basis that will be convenient in Section \ref{sec:Computation twisting crossed} of this paper is given by the set $\{b_1,...,b_{k-1}\}$, where $b_s$ is the simple arc with endpoints in $B_s$ and $B_{s+1}$ as in the picture shown below:

\begin{figure}[h]
    \centering
  \includegraphics[width=0.67\textwidth]{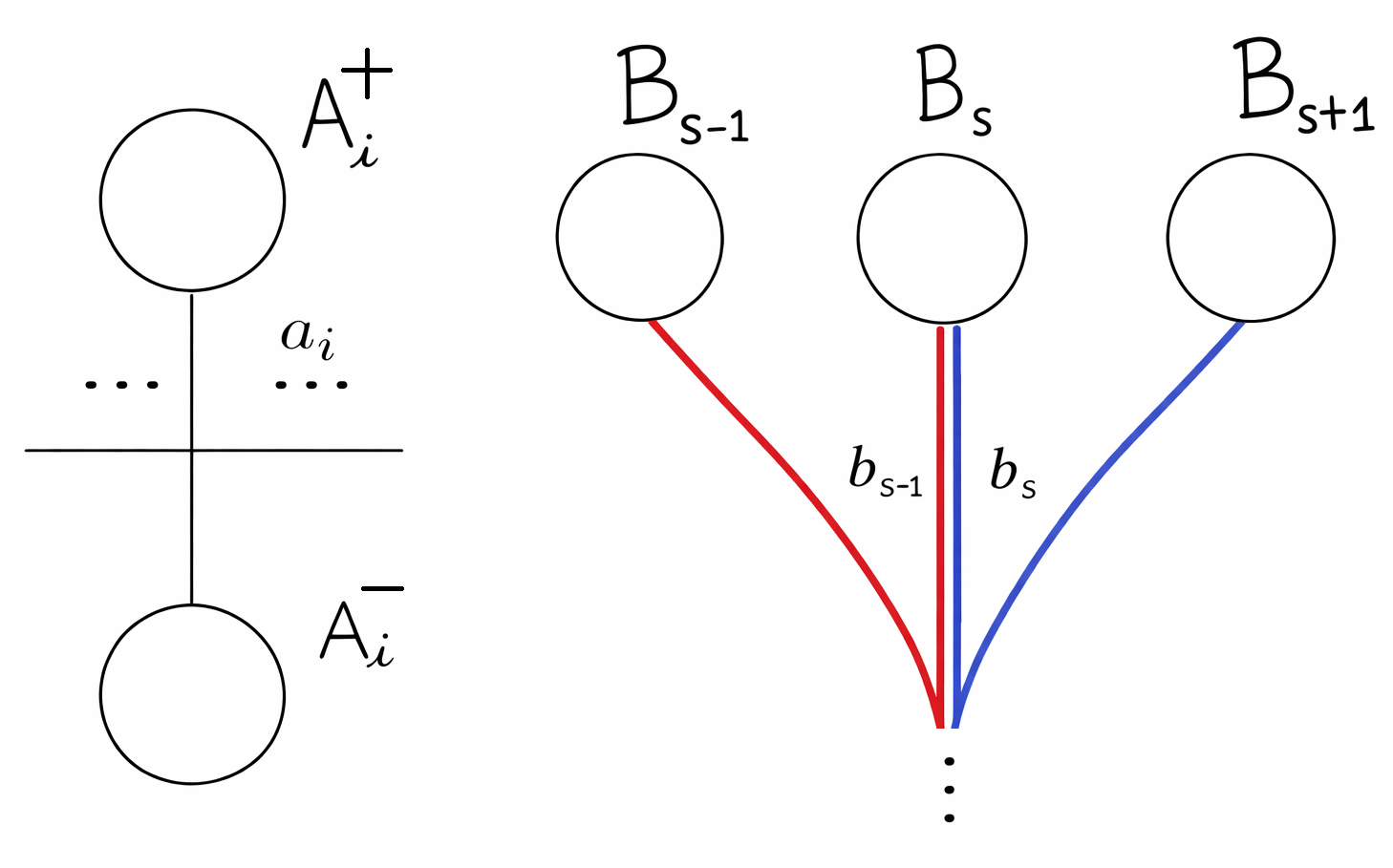}
    \caption{ By $a_i$ we denote the generators corresponding to the fundamental group of $M_{n,k}$. By $b_s$ we denote the generators which are the simple arcs with endpoints in $B_s$ and $B_{s+1}.$  }
    \label{fig:placeholder}
\end{figure}
\FloatBarrier

For Proposition \ref{prop isomorphism Twist and cohomology} we use a different basis for $H_1(M_{{n,k}},\partial M_{{n,k}})$. This is the set $\{\beta_2,...,\beta_k\}$, where $\beta_s$ is the simple arc with endpoints in $B_s$ and $B_{1}$ as in the picture shown below:

\begin{figure}[h]
    \centering
  \includegraphics[width=0.67\textwidth]{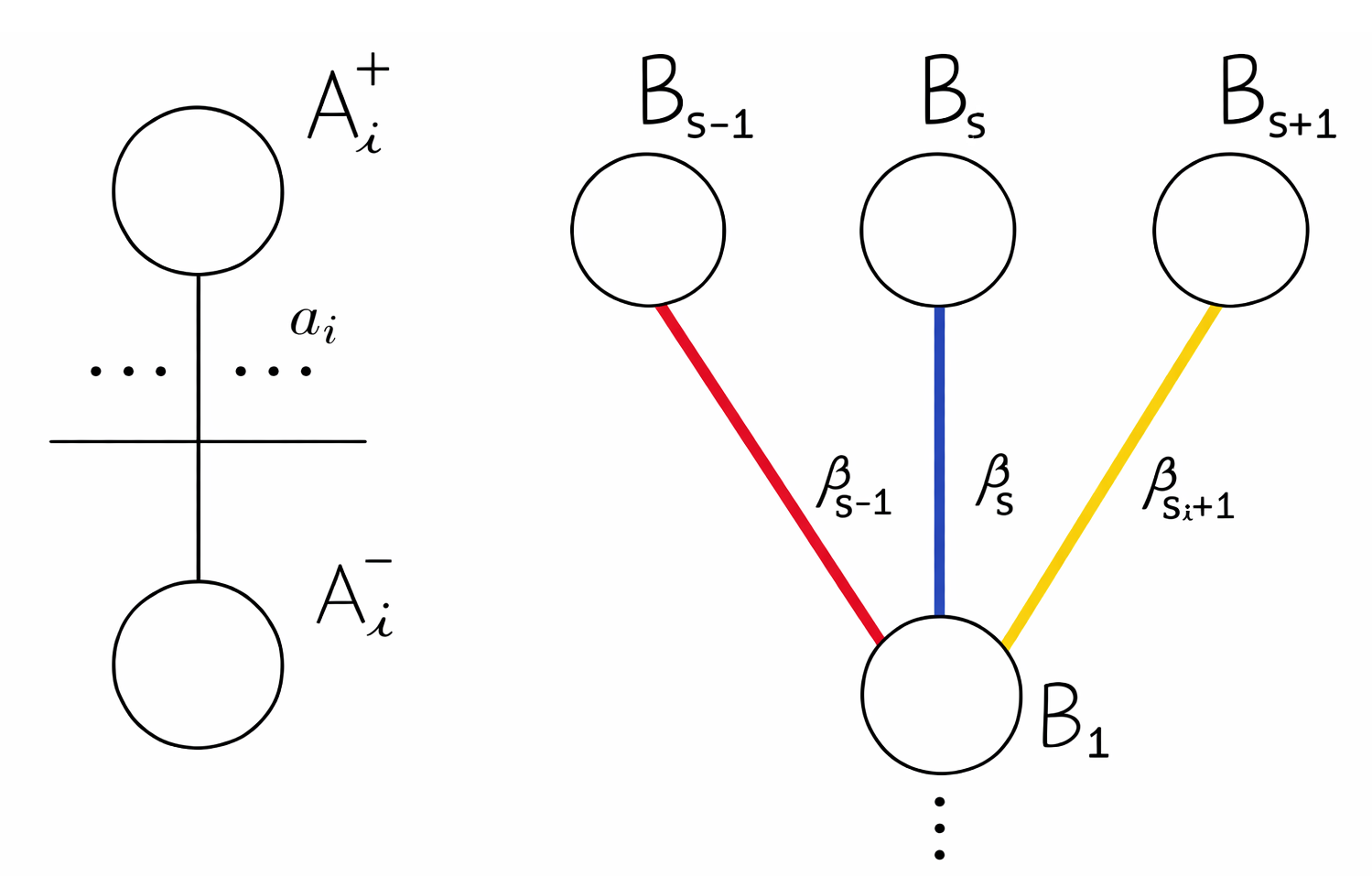}
    \caption{ By $a_i$ we denote the generators corresponding to the fundamental group of $M_{n,k}$. By $\beta_s$ we denote the generators which are the simple arcs with endpoints in $B_s$ and $B_{1}.$ }
    \label{fig:placeholder}
\end{figure}
\FloatBarrier

For the reader's convenience, we adapt the computations from \cite[pp. 214--215]{18} in the following lemma. 
\begin{lema}\label{lemma product of twist trivial}
   In $M_{n,k}$ we have that  $[T_{B_1}]\cdots [T_{B_k}]=\mathrm{Id}$, where $B_1,...,B_k$ are the boundary spheres of $M_{n,k}$.

 \begin{proof}
  Let $X$ be the manifold obtained from $M_{n,k}$ by cutting along the $n$ core spheres $S_1,\ldots,S_{n}$. Then $X$ is a 3-ball with $2n+k-$1 disjoint sub-balls removed. Align the $2n+k-$1 boundary spheres along the $z$-axis, and isotope $X$ such that it is invariant under rotations by any angle about the $z$-axis, as shown in the figure below.  
    
\begin{figure}[ht]
    \centering
     \includegraphics[width=0.4\textwidth]{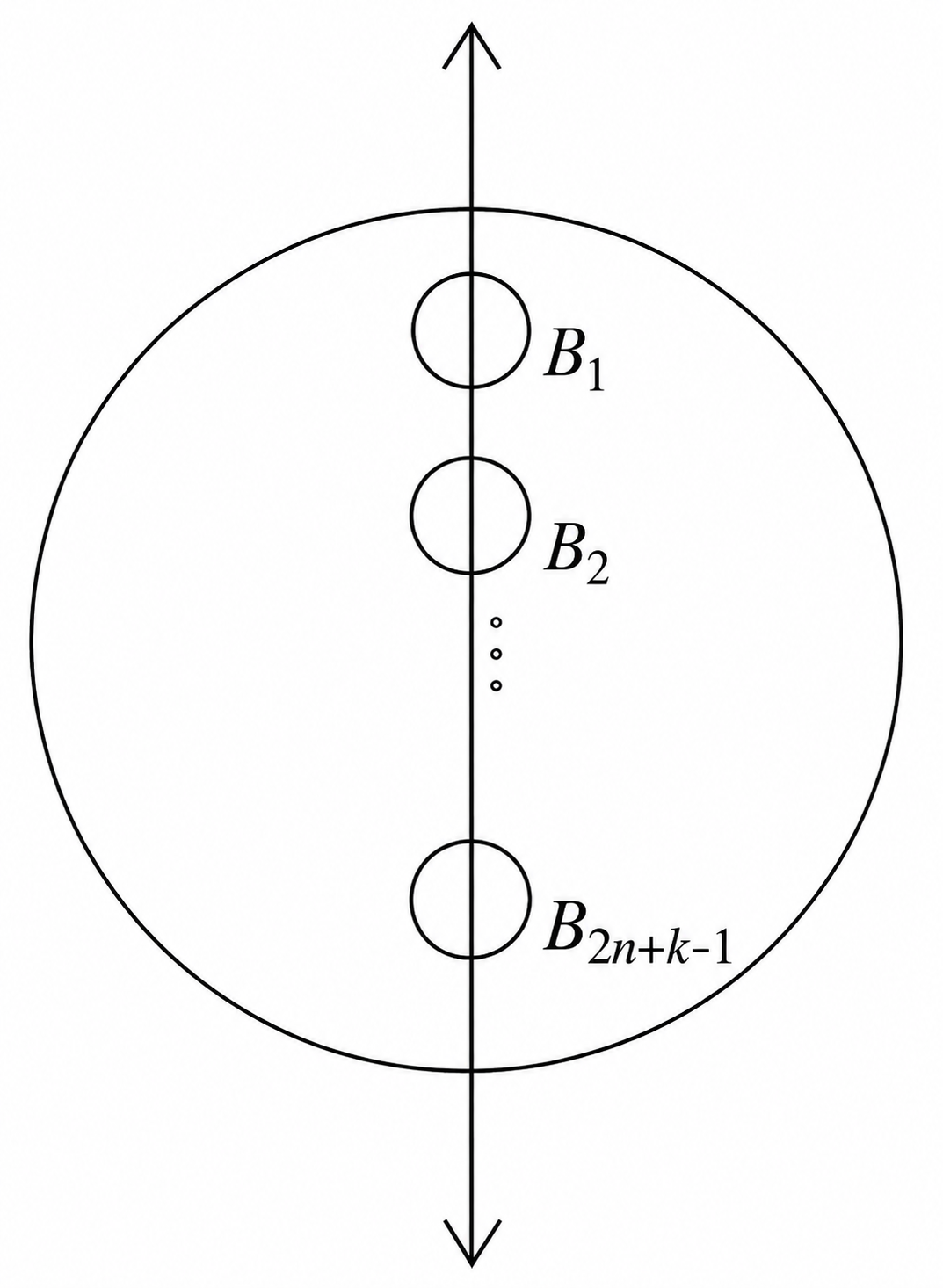}
    \caption{Manifold $S^3$ with $2n+k$ open balls removed.}
\end{figure}
\FloatBarrier

Choose neighborhoods $U_i\cong B_i\times [0,1]$ of the boundary spheres $B_i,$ as shown in red in the picture below:

\begin{figure}[ht]
    \centering
    \includegraphics[width=0.45\textwidth]{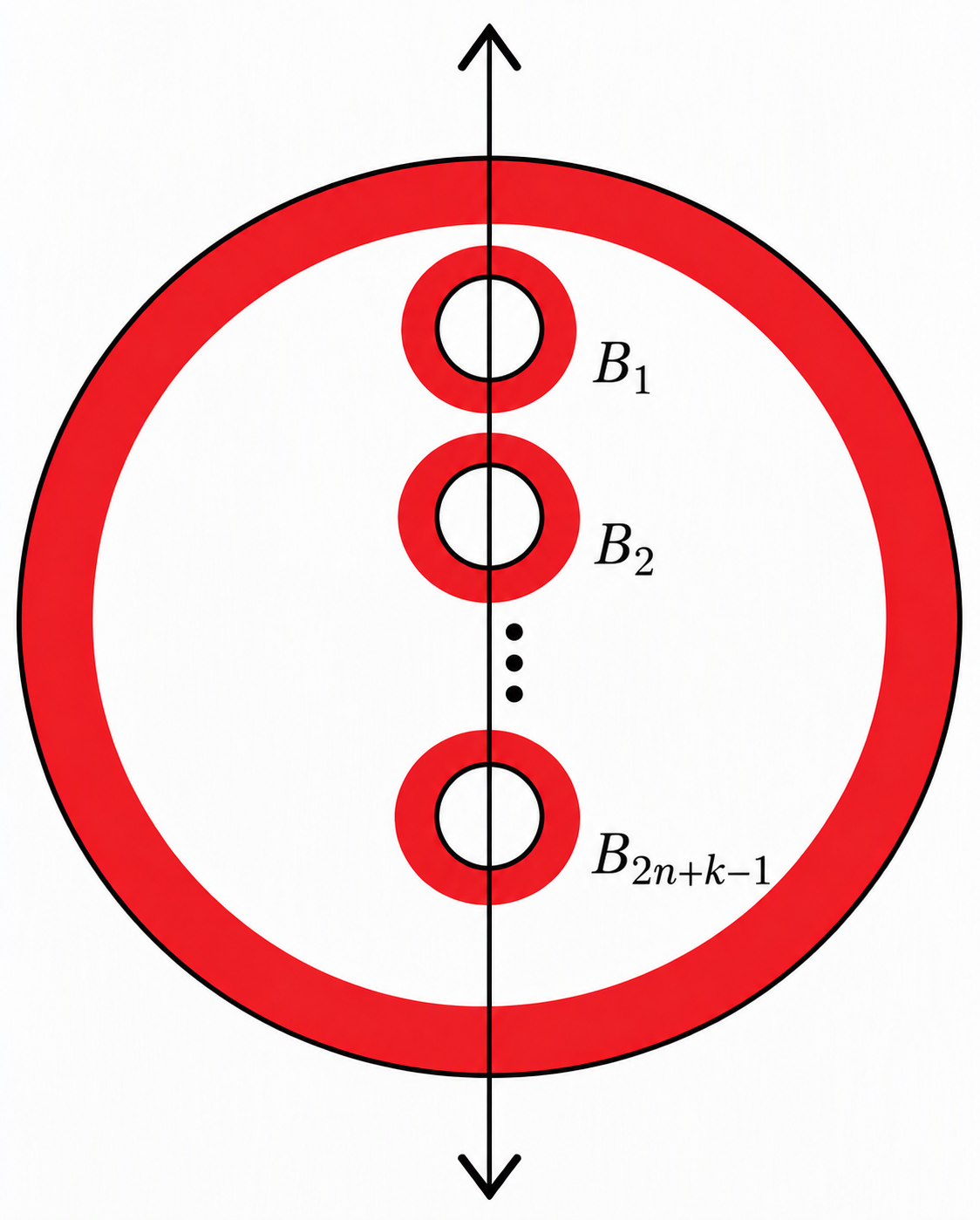}
    \caption{Neighborhoods $U_i$ of $B_i$ shown in red.}
\end{figure}
\FloatBarrier

Let $r_{\theta}:X\xrightarrow{}X$ be rotation about the $z$-axis by $2\pi\theta.$ Let $H:X\times [0,1]\xrightarrow{}X$ be the map defined by:

  \[   H(x,\tau)=\left\{
\begin{array}{ll}
(r_{\tau t}(s),t) & \text{ if } x=(s,t)\in U_i \\
     r_\tau(x) &   \text{ otherwise}. \\
      
\end{array}
\right. \]

To see that $H$ is continuous, observe that if $x=(s,1)\in B_i\times\{1\},$ then $(r_{\tau }(s),1)=r_{\tau}(s,1)=r_\tau(x).$
Moreover, $H(x,0)=r_0(x)=x$ and $H(-,1)=T_{B_1}\cdots T_{B_{2n+k}}$.
Thus, $H$ is an isotopy from  $\text{Id}$ to the map $T_{B_1}\cdots T_{B_{2n+k}}$. Since $H$ fixes the boundary of $X$ pointwise, it induces a map $H: M_{n,k}\times [0,1]\xrightarrow{}M_{n,k}$. Since $2n$ sphere twists descend to pairwise identical sphere twists about the core spheres of $M_{n,k}$, and since these twists commute and have order at most two, we conclude that $H$ is an isotopy from $\text{Id}$ to $T_{B_1}\cdots T_{B_{k}}$.
     
  
   \end{proof}
\end{lema}

The proof of the next proposition follows the lines of Claim 2 in the proof of Theorem 2.1  in \cite{3}.
\begin{prop}\label{prop Twist generated by TS and TB}
    $\ts(M_{{n,k}})$ is generated by the sphere twists about the core spheres of the $n$ summands of $S^2\times S^1$ in $M_{n,k}$ and by the sphere twists about $k-1$ boundary components of $M_{n,k}$. Therefore: 
    $$\ts(M_{{n,k}})= \langle [T_{S_1}],...,[T_{S_n}],[T_{B_2}],...,[T_{B_k}]\rangle.$$

    \begin{proof}
        Let $X$  be the 3-manifold with boundary obtained by removing $2n+k$ disjoint open balls from $S^3$. By \cite[Lemma 2.5]{3},  $\map(X)$ is generated by the set $\mathcal{S}$ consisting of $2n+k$ sphere twists about embedded spheres that are parallel to components of $\partial X$. By Claim 2 of the proof of Theorem 2.1 in \cite{3}, we see that the map $\map(X) \xrightarrow{} \map(M_{n,k})$ induced by gluing has image $\ts(M_{{n,k}})$. Thus, the image of a generating set of $\map(X)$ is a generating set of $\ts(M_{{n,k}})$. The image of $\mathcal{S}$ in $\map(M_{{n,k}})$ is the set of sphere twists about the core spheres of the $n$ summands of $S^2\times S^1$ in $M_{n,k}$ and  the sphere twists about the $k$ boundary components of $M_{n,k}$. Since the product of all the boundary twists is trivial by Lemma \ref{lemma product of twist trivial},  we have that $[T_{B_1}]=\left[\prod_{j=2}^{k}T_{B_j}\right].$ This implies that
    $ \langle [T_{B_1}],...,[T_{B_k}]\rangle= \langle [T_{B_2}],...,[T_{B_k}]\rangle.$
       
    Thus: $$\ts(M_{{n,k}})= \langle [T_{S_1}],...,[T_{S_n}],[T_{B_2}],...,[T_{B_k}]\rangle.$$
          \end{proof}
\end{prop}

We have the following proposition:
\begin{prop}\label{prop isomorphism Twist and cohomology}
 
 Let $S$ be one of the core spheres $S_i,$ $1\le i\le n$, in $M_{n,k}$ or one of the boundary components $B_i,$ $2\le i\le k$. Then,  $\tsh_{n,k}([T_{S}])\in H^1(M_{{n,k}},\partial M_{{n,k}})$ is the cohomology class that is 1 when evaluated on $a_i$ or $\beta_i$ if $S=S_i$ or $S=B_i$ respectively, and 0 otherwise. Moreover: 
 
 $$\ts(M_{{n,k}})\cong H^1(M_{{n,k}},\partial M_{{n,k}})\cong \Z_{2}^{n+k-1}.$$
  \begin{proof}
      When $S_i$ is one of the core spheres of $M_{n,k}$, the proof follows as in \cite[Lemma~5.1]{3}, and the boundary case is the same argument applied to the arcs 
      $\beta_i$. We review the main step here. Consider the normal neighborhood $S_i\times [0,1]$ of $S_i$ and let $J$ be the subpath of $a_i$ which is the intersection of $S_i\times [0,1]$ with $a_i$ as in the picture below.

\begin{figure}[h]
\centering

\begin{tikzpicture}[scale=0.45, transform shape]

 \draw [blue, line width=1.5mm] (0,3)--(0,5) (0,4)node[black, anchor=west, scale=2]{$J$};
 \draw[red, line width=1.3mm] (0,-3)--(0,3) (0,0)node[black, anchor=west, scale=2]{$a_i$};
 \draw (0,7) circle (2cm) (2,7)node[anchor=west, scale=2]{$S_i^{+}$};
\draw (0,7) circle (4cm) (4,8.5) node[anchor=west, scale=2]{$S_i^{+}\times [0,1]$};
\draw (0,-5) circle (2cm) (2, -3.5) node[anchor=west, scale=2]{$S_i^{-}$};

\end{tikzpicture}

\caption{The interval $J$ in blue is homotoped so it is included in the axis of rotation of $T_{S_i}$.}

\end{figure}
\FloatBarrier
    
     Let $\ell(t)$ be the generator of $\pi_1(\GLR, \mathrm{Id})$. Recall that $T_{S_i}(s,t)=(\ell(t)s,t)$ if $(s,t)\in  S_i\times [0,1]$, and $T_{S_i}$ is the identity otherwise. Since $a_i$ can be homotoped so that $J$ is homotopic to $p_0\times [0,1]$, where $p_0$ is the south pole,  $\mathcal{D}([T_{S_i}])(p_0,t)$ is a rotation by an angle of $2\pi t$. 
     Hence $\mathcal{D}([T_{S_i}])$ is homotopic to $\ell(t)$ along 
     $J$ and is the identity outside the collar, so for each generator $c_j$ of $H_1(M_{{n,k}},\partial M_{{n,k}})$ we have:

      \[   \tsh_{n,k}([T_{S_i}])[c_j]=\left\{
\begin{array}{ll}
1 & \text{ if } c_j=a_i \\
     0 &   \text{ otherwise}. \\
      
\end{array}
\right. \]

Now, let $S=B_i$ be one of the boundary components of $M_{n,k}$, and consider the neighborhood $B_i\times [0,1]$  of $B_i$ as in the picture below:

 \begin{figure}[h]
\centering

\begin{tikzpicture}[scale=0.45, transform shape]

 \draw [blue, line width=1.5mm] (0,3)--(0,5) (0,4)node[black, anchor=west, scale=2]{$J_i$};
 \draw[red, line width=1.5mm] (0,-3)--(0,3) (0,0)node[black, anchor=west, scale=2]{$\beta_i$};
 \draw (0,7) circle (2cm) (2,7)node[anchor=west, scale=2]{$B_i$};
\draw (0,7) circle (4cm) (4,8.5) node[anchor=west, scale=2]{$B_i\times [0,1]$};
\draw (0,-5) circle (2cm) (2, -3.5) node[anchor=west, scale=2]{$B_1$};

\end{tikzpicture}

\caption{The interval $J_i$ in blue is homotoped so it is included in the axis of rotation of $T_{B_i}$.}

\end{figure}
\FloatBarrier

The same analysis as above yields:

  \[   \tsh_{n,k}([T_{B_i}])[c_j]=\left\{
\begin{array}{ll}
1 & \text{ if } c_j=\beta_i\\
     0 &   \text{ otherwise} . \\
      
\end{array}
\right. \]


Thus, $\tsh_{n,k}$ maps the set $\{[T_{S_1}],...,[T_{S_n}],[T_{B_2}],...,[T_{B_k}]\}$ to a basis of $H^1(M_{{n,k}},\partial M_{{n,k}})$. Recall that $\ts(M_{n,k})$ is an abelian group in which each element has order at most 2 and $\tsh_{n,k}$ restricts to a homomorphism on $\ts(M_{n,k})$.  Therefore, $\{[T_{S_1}],...,[T_{S_n}],[T_{B_2}],...,[T_{B_k}]\}$ is 
linearly independent and we have: $$\langle [T_{S_1}],...,[T_{S_n}],[T_{B_2}],...,[T_{B_k}]\rangle\cong H^1(M_{{n,k}},\partial M_{{n,k}}).$$

Thus, by Proposition \ref{prop Twist generated by TS and TB} we have: $$\ts(M_{{n,k}})\cong H^1(M_{{n,k}},\partial M_{{n,k}}),$$ which is in turn  isomorphic to $\Z_2^{n+k-1}$ by Proposition \ref{prop generating set of H(Mnk)}. This finishes the proof of the proposition.
      
  \end{proof}
\end{prop}

Proposition \ref{prop isomorphism Twist and cohomology} shows that $\tsh_{n,k}$ restricted to $\ts(M_{n,k})$ is an isomorphism. Let $Q:=\tsh_{n,k}\Bigr|_{\ts(M_{n,k})}^{-1}$. 
By construction, the composition $\map(M_{{n,k}})\xrightarrow{\tsh_{n,k}}H^1(M_{{n,k}},\partial M_{{n,k}})\xrightarrow{Q}\ts(M_{{n,k}})$, which we again denote by $\tsh_{n,k}$, restricts to the identity on $\ts(M_{n,k})$. Thus, by  \cite[Lemma 3.1]{3} and Proposition \ref{prop isomorphism Twist and cohomology} we get the following corollary.


\begin{colo}\label{col HV splits}
For $k\ge 1,$ the Hatcher--Vogtmann short exact sequence:
$$ 1\xrightarrow{} \ts(M_{n,k})\xrightarrow{}\map(M_{n,k})\xrightarrow{\rho_{n,k}}\map({G_{n,k}})\xrightarrow{}1$$
  is a split short exact sequence with $\ts(M_{n,k})\cong \Z_2^{n+k-1}$.  
\end{colo}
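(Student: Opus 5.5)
The plan is to deduce Corollary \ref{col HV splits} from the standard splitting criterion \cite[Lemma 3.1]{3}. That criterion says: if $1\to A\to G\xrightarrow{\rho} Q\to 1$ is exact with $A$ abelian, and $\tsh\colon G\to A$ is a crossed homomorphism whose restriction to $A$ is the identity, then the sequence splits. The section is $s(q)=\tsh(g^{-1})\,g$ for any lift $g$ of $q$. Exactness of the Hatcher--Vogtmann sequence is taken as given from \cite{5}. What remains is to produce a crossed homomorphism $\map(M_{n,k})\to\ts(M_{n,k})$ that restricts to the identity on $\ts(M_{n,k})$.

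First, Lemma \ref{lem:functor} applied with $\mathcal{F}=H_1$ shows that the relative twisting map
$\tsh_{n,k}\colon\map(M_{n,k})\to H^1(M_{n,k},\partial M_{n,k})$ is a crossed homomorphism. Here the action of $\map(M_{n,k})$ on $H^1(M_{n,k},\partial M_{n,k})$ is by pullback, and this is well defined because diffeomorphisms fix $\partial M_{n,k}$ pointwise.

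Second, by Proposition \ref{prop isomorphism Twist and cohomology}, $\tsh_{n,k}$ restricted to $\ts(M_{n,k})$ is an isomorphism onto $H^1(M_{n,k},\partial M_{n,k})\cong\Z_2^{n+k-1}$. One subtlety must be checked: this restriction is an honest homomorphism. Elements of $\ts(M_{n,k})$ act trivially on $H_1(M_{n,k},\partial M_{n,k})$, since sphere twists are homotopic to the identity rel boundary on the level of homology, so the crossed-homomorphism identity reduces to the homomorphism identity there. The composite $Q\circ\tsh_{n,k}$, with $Q$ the inverse of that isomorphism, is then a map $\map(M_{n,k})\to\ts(M_{n,k})$ that is the identity on $\ts(M_{n,k})$.

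The main obstacle is making sure the composite is still a crossed homomorphism with respect to the conjugation action of $\map(M_{n,k})$ on the normal subgroup $\ts(M_{n,k})$. For this I would show that $Q$ is equivariant, where $\map(M_{n,k})$ acts on $H^1(M_{n,k},\partial M_{n,k})$ by pullback and on $\ts(M_{n,k})$ by conjugation. I would first check this on generators: $f T_S f^{-1}=T_{f(S)}$, and $\tsh_{n,k}(T_{f(S)})$ is the cohomology class dual to the sphere $f(S)$, that is, intersection number mod $2$ with $f(S)$. This equals the pullback of the class dual to $S$ under $f^{-1}$. The identification of $\tsh_{n,k}(T_S)$ with the Poincaré--Lefschetz dual of $[S]$ comes from the local computation in Proposition \ref{prop isomorphism Twist and cohomology}, because the derivative is a full rotation exactly across the collar of $S$. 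Equivariance on the generators of Proposition \ref{prop Twist generated by TS and TB} then extends to all of $\ts(M_{n,k})$ since both sides are additive.

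Once equivariance holds, $Q\circ\tsh_{n,k}$ is a crossed homomorphism that restricts to the identity on $\ts(M_{n,k})$. Applying \cite[Lemma 3.1]{3} then shows the sequence splits, and Proposition \ref{prop isomorphism Twist and cohomology} gives $\ts(M_{n,k})\cong\Z_2^{n+k-1}$. The hypothesis $k\ge1$ is used only through Propositions \ref{prop generating set of H(Mnk)} and \ref{prop Twist generated by TS and TB}, where the boundary spheres contribute the $k-1$ extra generators.
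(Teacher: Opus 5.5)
Your proposal is correct and follows the paper's route: compose $\tsh_{n,k}$ with the inverse $Q$ of its restriction to $\ts(M_{n,k})$, apply \cite[Lemma 3.1]{3}, and read off $\ts(M_{n,k})\cong\Z_2^{n+k-1}$ from Proposition \ref{prop isomorphism Twist and cohomology}. Your explicit check that $Q$ is equivariant is a step the paper leaves implicit, and your Poincar\'e--Lefschetz argument for it is valid; it also follows directly from the cocycle identity $\tsh_{n,k}(fg)=g^*\tsh_{n,k}(f)+\tsh_{n,k}(g)$ together with $t^*=\mathrm{id}$ for $t\in\ts(M_{n,k})$, which you already used and which gives $\tsh_{n,k}(ftf^{-1})=(f^{-1})^*\tsh_{n,k}(t)$.
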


\section{Computation of the twisting crossed homomorphism on the generating set}\label{sec:Computation twisting crossed}

The following lemma allows us to compute the derivative crossed homomorphism in terms of Cartesian coordinates.

\begin{lema}\label{lemma computaion derivative}Let $N$ and $N^{\prime}$ be smooth 3-manifolds. Let $\sigma_0:N\xrightarrow{}\Fr(TN)$ and $\sigma_0^{\prime}:N^{\prime}\xrightarrow{}\Fr(TN^{\prime})$  be trivializations of $TN$ and $TN^{\prime}$ respectively. Let $f:N\xrightarrow{}N$ and $f^{\prime}:N^{\prime}\xrightarrow{}N^{\prime}$ be diffeomorphisms, and suppose that $f=\phi\circ f^{\prime}\circ\phi^{-1}$, for some diffeomorphism $\phi:N^{\prime}\xrightarrow{}N$ that is compatible up to homotopy with $\sigma_0$ and $\sigma_0^{\prime}$. Then $\mathcal{D}(f)^{-1}= \mathcal{D}(f^{\prime})^{-1}\circ\phi^{-1}.$

    \begin{proof}
Since $\phi$ is compatible with $\sigma_0$ and $\sigma_0^{\prime}$, we have the following commutative diagram:

\[ \begin{tikzcd}[row sep=25,column sep = 40]
N^{\prime} \arrow{r}{\sigma_0^{\prime}} & \Fr(TN^{\prime}) \\
N \arrow{u}{\phi^{-1}} \arrow{r}{\sigma_0}& \Fr(TN)\arrow[swap]{u}{(D(\phi^{-1}))_{*}}
\end{tikzcd}
\]

From this diagram we see that for each $p\in N,$ we have $D_p(\phi^{-1})[\sigma_0(p)]=\sigma_0^{\prime}(\phi^{-1}(p))$, so $\sigma_0(p)=D_{\phi^{-1}(p)}(\phi)\sigma_0^{\prime}(\phi^{-1}(p))$ and $\sigma_0^{-1}(p)=(\sigma_0^{\prime})^{-1}(\phi^{-1}(p))D_p(\phi^{-1}).$  Moreover, we have: $$D_{\phi^{-1}(f(p))}(f^{\prime})^{-1}=D_{\phi^{-1}(f(p))}(\phi^{-1} \circ f^{-1}\circ \phi)=D_{p}(\phi^{-1}) D_{f(p)}f^{-1} D_{\phi^{-1}(f(p))}(\phi).$$ Thus, $$\mathcal{D}(f)^{-1}(p)=\sigma_0^{-1}(p)[Df^{-1}]_{f(p)}\sigma_0(f(p))=$$
$$(\sigma_0^{\prime})^{-1}(\phi^{-1}(p))D_{p}(\phi^{-1})D_{f(p)}f^{-1}D_{\phi^{-1}(f(p))}(\phi)\sigma_0^{\prime}(\phi^{-1}(f(p)))=$$
$$(\sigma_0^{\prime})^{-1}(\phi^{-1}(p))D_{\phi^{-1}(f(p))}(f^{\prime})^{-1}\sigma_0^{\prime}(\phi^{-1}(f(p)))=$$ 

$$(\sigma_0^{\prime})^{-1}(\phi^{-1}(p))D_{f^{\prime}(\phi^{-1}(p))}(f^{\prime})^{-1}\sigma_0^{\prime}(f^{\prime}(\phi^{-1}(p)))=\mathcal{D}(f^{\prime})^{-1}(\phi^{-1}(p)).$$ 

 \end{proof}
\end{lema}

In \cite{4} we proved the following theorems:

\begin{teorema}\label{thm cross F}\cite[Section 3]{4}
    For every $G_{n,k}$ we have that $\tsh_{n,k}([F_{i,j}])=1$.

\end{teorema}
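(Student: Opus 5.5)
The plan is to compute $\tsh_{n,k}([F_{i,j}])$ by evaluating it on the basis $\{a_1,\dots,a_n,b_1,\dots,b_{k-1}\}$ of $H_1(M_{n,k},\partial M_{n,k})$ from Section \ref{sec:Twisting crossed}, and to show that every value is $0$. Here ``$=1$'' means the identity element of $\ts(M_{n,k})$, which corresponds to the zero class in $H^1(M_{n,k},\partial M_{n,k})$.

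Since $F_{i,j}$ is the identity outside $N_{i,j}$, $\mathcal{D}([F_{i,j}])$ is the identity matrix there. So only the parts of the cycles that meet $N_{i,j}$ contribute. We isotope the representatives so that every $b_s$, and every $a_m$ with $m\neq j$ other than $a_i$, avoids $N_{i,j}$; this is possible because $N_{i,j}$ is a small neighbourhood of $a_j$ together with the ball bounded by $A_i^{+}$. For these cycles $\mathcal{D}([F_{i,j}])$ is constant equal to $\mathrm{Id}$ along them, so the value is $0$.

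For the remaining cycles $a_j$ and $a_i$, we use Lemma \ref{lemma computaion derivative} to reduce to the model $f'_{i,j}$ on $N'_{i,j}\subseteq D^2\times S^1$ with the standard trivialization. In coordinates $(x,y,\theta)$ the map is $f'_{i,j}(x,y,\theta)=(x,y,\theta+\psi(r))$ with $r=\sqrt{x^2+y^2}$. Its derivative is unipotent: it is the identity plus a term $\psi'(r)\,\partial r$ in the $\theta$-row. The straight-line path $A_t=\mathrm{Id}+tE$ with $E$ nilpotent stays in $\GLR$, so $\mathcal{D}(f'_{i,j})$ is homotopic to the constant map $\mathrm{Id}$ through maps that are $\mathrm{Id}$ near $\partial N'_{i,j}$. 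Hence $\mathcal{D}([F_{i,j}])$ is null-homotopic relative to the complement of $N_{i,j}$, and its induced map on $H_1$ vanishes on $a_i$ and on $a_j$ as well.

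The main obstacle is that the standard trivialization of $D^2\times S^1\subset\R^3$ is not the product frame $(\partial_x,\partial_y,\partial_\theta)$; it rotates once as one goes around the $S^1$ factor. I would handle this by writing the change of frame as $R(\theta)$, a rotation by $2\pi\theta$ about the vertical axis. The model derivative then becomes $R(\theta+\psi)^{-1}\,(\mathrm{Id}+E)\,R(\theta)$, or the appropriate conjugate by the formula for $\mathcal{D}$.

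Along $a_j$, which is parametrized in the region where $\psi\equiv 1$, this is $R(\theta+1)^{-1}R(\theta)=\mathrm{Id}$, because a full turn of $\theta$ returns the same frame. Along the portion of $a_i$ that crosses the annular region where $\psi$ decreases from $1$ to $0$, the loop in $\GLR$ is $\theta\mapsto R(\theta+\psi(r))^{-1}(\mathrm{Id}+E)R(\theta)$ traced while $r$ varies with $\theta$ fixed. This path starts and ends at $\mathrm{Id}$ and is a path $R(s)$ for $s$ running from $1$ down to $0$ in the rotation parameter. It crosses back over the same segment of $a_i$ when $a_i$ returns through $A_i^{-}$, or, if it crosses only once, it contributes a full rotation that must be paired with a second crossing.

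I expect this bookkeeping to be the delicate step: checking that the two traversals cancel mod $2$, equivalently that $a_i$ meets the annulus an even number of times. I would first try to arrange the representative of $a_i$ to enter and exit $N_{i,j}$ through $A_i^{+}$ along a radial segment, so that the two contributions are inverse paths and cancel.
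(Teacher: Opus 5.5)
You have a genuine gap, at exactly the step you flag as delicate, and it cannot be closed the way you propose. The arc $a_i\cap N_{i,j}$ runs once from the outer torus of $N_{i,j}$ to $A_i^{+}$. The other side of the glued sphere, $A_i^{-}$, lies outside $N_{i,j}$, so there is no return crossing. No choice of representative helps either. The torus $T=\{r=1/2\}\times S^1$ and $A_i^{+}$ cobound a region of $N_{i,j}$, and $a_i$ meets $A_i^{+}=S_i$ an odd number of times, so $a_i$ meets $T$ an odd number of times. With your frame comparison, your straight-line homotopy deforms the model derivative to $p\mapsto R(\psi(r(p)))^{-1}$ through maps that equal $\mathrm{Id}$ wherever $f'_{i,j}$ is the identity. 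That map factors through $r$, so its class on $a_i$ is the number of crossings of $T$ mod $2$, which is the generator of $\pi_1(\GLR,\mathrm{Id})$. Indeed, on the untwisted solid torus of revolution, $f'_{i,j}$ is literally the rotation about the vertical axis by $2\pi\psi(r)$, and its Jacobian is that rotation times a unipotent matrix. So your computation, carried through, proves $\tsh_{n,k}([F_{i,j}])(a_i)\neq 0$, which is the opposite of the statement.

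The missing idea is that the framing question is settled by the choice of $\phi$, not by the geometry of $a_i$. Let $\delta(w,\theta)=(e^{2\pi i\theta}w,\theta)$ be the meridional disk twist of $D^2\times S^1$.

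\begin{itemize}
\item For a given framing of the model, exactly one of $\phi$ and $\phi\circ\delta$ carries it to $\sigma_0$ up to homotopy.
\item One has $\delta f'_{i,j}\delta^{-1}=f'_{i,j}\circ\rho$, where $\rho(w,\theta)=(e^{2\pi i\psi(|w|)}w,\theta)$ is supported in the twist region and represents $T_{S_i}$.
\end{itemize}

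So the two candidate maps $F_{i,j}$ differ by $T_{S_i}$. The requirement that $\phi$ respect the trivializations is precisely what decides the statement.

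The paper gives no proof beyond the citation to \cite{4}. However, it reuses that computation for $\Phi_{(a_i,J_l)}$ (``the same local formula'') and computes a Jacobian in model coordinates in Theorem~\ref{thm crossed H}. This indicates that the intended argument is your third paragraph. There the model trivialization is the coordinate frame $(\partial_x,\partial_y,\partial_\theta)$. Equivalently, $D^2\times S^1$ sits in $\R^3$ with the meridian disk turning once around the core, so that this frame is homotopic to the $\R^3$ frame. With that convention, the unipotent Jacobian together with your second paragraph is a complete proof, and the ``main obstacle'' disappears. With the convention you adopted instead, no bookkeeping rescues the claim: the lift with vanishing twisting class is then $F_{i,j}T_{S_i}$, not $F_{i,j}$.
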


\begin{teorema}\label{thm cross G} \cite[Section 4]{4}
For every $G_{n,k}$ we have that $\tsh_{n,k}([G_j])=1.$
 
\end{teorema}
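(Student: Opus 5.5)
By Lemma~\ref{lem:functor} and the definition of $\tsh_{n,k}$, the class $\tsh_{n,k}([G_j])\in H^1(M_{n,k},\partial M_{n,k})$ is determined by the loops in $\GLR$ obtained by composing $\mathcal{D}([G_j])$ with representatives of a basis of $H_1(M_{n,k},\partial M_{n,k})$. I will use the basis $\{a_1,\dots,a_n,b_1,\dots,b_{k-1}\}$ and show that each of these loops is null-homotopic, which gives $\tsh_{n,k}([G_j])=1$.

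\textbf{Generators away from $N_j$.} Since $G_j$ is the identity outside $N_j$, we have $\mathcal{D}([G_j])\equiv \mathrm{Id}$ off $N_j$. The loops $a_i$ with $i\neq j$ and the arcs $b_s$ can be chosen disjoint from $N_j$: the neighborhood only has to contain $A_j^{\pm}$ and a portion of $a_j$. So these generators all evaluate to $0$.

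\textbf{The generator $a_j$.} By Lemma~\ref{lemma computaion derivative}, $\mathcal{D}(G_j)^{-1}=\mathcal{D}(g_j')^{-1}\circ\phi^{-1}$ on $N_j$. It therefore suffices to compute $\mathcal{D}(g_j')$ in Cartesian coordinates of $\widetilde{N_j}\subseteq\R^3$ along $\phi^{-1}(a_j)$.

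The map $g_j'$ is $p\mapsto R_{\alpha(p)}p$, where $R_\alpha$ is rotation about the $z$-axis and $\alpha(p)=\pi\psi(|p|)$. Its derivative is
$$D_p g_j' = R_{\alpha(p)} + \big(\tfrac{d}{d\alpha}R_\alpha\big)\big|_{\alpha(p)}\,p\;\nabla\alpha(p)^{T}.$$
Two cases cover the loop:
\begin{itemize}
\item On the $z$-axis, $\tfrac{d}{d\alpha}R_\alpha\, p=0$, so $D g_j'=R_{\pi\psi(r)}$.
\item In the inner ball $r<1/3$, we have $\psi\equiv 1$, so $\nabla\alpha=0$ and $D g_j'=R_\pi$ is constant.
\end{itemize}

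With the chosen parametrization, $a_j$ runs as follows. For $t\in[0,1/3]$ it goes down the positive $z$-axis from $r=1$ to $r=1/3$. It then stays in the inner region, passing through $A_j^+\sim A_j^-$. For $t\in[2/3,1]$ it goes back up the same segment of the $z$-axis. Since the standard trivialization of $\R^3$ is used, the loop $\mathcal{D}(g_j')(a_j(t))$ in $\GLR$ is
\begin{itemize}
\item first the path $r\mapsto R_{\pi\psi(r)}$ from $\mathrm{Id}$ to $R_\pi$,
\item then the constant path at $R_\pi$,
\item then the same path traversed backwards, from $R_\pi$ to $\mathrm{Id}$.
\end{itemize}
This is a path followed by its inverse, hence null-homotopic. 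Inverting pointwise (which $\mathcal{D}(\cdot)^{-1}$ requires) preserves null-homotopy. So $\tsh_{n,k}([G_j])$ evaluates to $0$ on $a_j$ as well.

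\textbf{Main obstacle.} The delicate step is the passage through the identified spheres $A_j^+\sim A_j^-$. Two things must be checked. First, the trivialization $\sigma_0'$ on $N_j'$ must be compatible with the gluing, i.e.\ $\phi$ respects trivializations up to homotopy across the identification, so that the inner constant segment really is $R_\pi$ on both sides. Second, $g_j'$ must be compatible with the gluing map, which holds because the rotation by $\pi$ exchanges $A_j^+$ and $A_j^-$. I would handle this by choosing the identification of $A_j^+$ with $A_j^-$ to be the restriction of the rotation by $\pi$ itself, composed with the reflection fixing the trivialization. This makes $D g_j'$ literally constant on a neighborhood of the glued sphere.

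A secondary point is to justify homotoping $a_j$ onto the $z$-axis in the shell region without changing its class. This is fine because $\tsh_{n,k}$ only depends on homology classes.
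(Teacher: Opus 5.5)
\textbf{There is a genuine gap, in the middle segment of $a_j$.} This is the step you flag yourself, and the remedy you propose does not close it.

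You evaluate $\mathcal{D}(g_j')$ in the standard frame of $\R^3$. That frame is not a framing of $TN_j'$ for \emph{any} identification of $A_j^+$ with $A_j^-$. If the gluing, extended across collars, had derivative $\mathrm{Id}$ along $A_j^+$, the identification would be a translation. A translation cannot exchange the inner and outer sides of the spheres; it would give a non-orientable manifold. For the natural inversion-type extension of an orthogonal gluing with linear part $K$, the derivative at the point of $A_j^+$ with unit normal $u$ is $Kr_u$, where $r_u$ is the reflection in $u^{\perp}$.

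So an honest framing $\tau$ must differ from the standard one near, say, $A_j^-$. Along the middle segment, $\mathcal{D}(g_j')(p)=\tau(R_\pi p)^{-1}R_\pi\,\tau(p)$ is then not constant, because $a_j$ must cross the glued sphere. Making $Dg_j'$ constant is beside the point, since $Dg_j'=R_\pi$ on all of $\{r<1/3\}$ anyway. Also, no reflection preserves an oriented frame.

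\textbf{The answer depends on the identification.} The description of $N_j'$ leaves this identification unspecified, and the paper only cites \cite{4} for this theorem.
\begin{itemize}
\item Gluing by $R_\pi\circ\rho$, with $\rho$ the reflection of $A_j^+$ in a plane through its centre, makes $g_j'$ act on the glued sphere as a reflection. In that case the loop is trivial.
\item Gluing by $R_\pi\circ a$, with $a$ the antipodal map of $A_j^+$, is equally compatible with $g_j'$ and orientable. Here $g_j'$ acts antipodally on the glued sphere, and lifting to $S^3$ shows the loop is the generator of $\pi_1(\GLR)$. The resulting lift of $I_j$ differs from the first one by $T_{S_j}$.
\end{itemize}
Your computation returns a constant loop in both cases, so it cannot be a proof.

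\textbf{How to repair it.} Your choice of gluing is the right one; what is missing is an argument that uses it. The following works for every framing.
\begin{itemize}
\item On $\{r\le 1/3\}$, $g=g_j'$ is an involution, so the cocycle identity gives $\mathcal{D}(g)(g(p))=\mathcal{D}(g)(p)^{-1}$.
\item Reparametrize the middle part of $a_j$ by $[0,1]$ and route it as $\gamma\cdot\overline{g\circ\gamma}$. Here $\gamma$ runs from $(0,0,1/3)$ to a point of the fixed circle of $g$ on the glued sphere.
\item Then $L(t)=\mathcal{D}(g)(a_j(t))$ satisfies $L(1-t)=L(t)^{-1}$. So $L$ is a path $P$ between two nontrivial involutions of $\GLR$, followed by the reverse of its pointwise inverse.
\item The space of nontrivial involutions in $\GLR$ ($\simeq\mathbb{RP}^2$) is connected and surjects onto $\pi_1(\GLR)$. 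Hence $P$ deforms, through paths with endpoints in that space, to a constant, and $L$ is null-homotopic.
\end{itemize}
The rest of your argument is fine: the other generators, and the cancellation of the two axis segments (taking $\tau$ standard there).
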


Let $N$ be the neighborhood associated to $\Phi_{(a_i,J_l)}$ constructed in Section \ref{sec: construction of diffeomorphisms}.
To simplify notation denote $\Phi_{(a_i,J_l)}$ by $\Phi$ and $N^{\prime}$ by $N$.

\begin{teorema}\label{thm cross Phi}

For every $G_{n,k}$ we have that $\tsh_{n,k}([\Phi_{(a_i,J_l)}])=1.$
  \begin{proof}

We study the action of  $\Phi_{(a_i,J_l)}$ on $H_1(M_{{n,k}},\partial M_{{n,k}})$. From the discussion after Proposition \ref{prop generating set of H(Mnk)} a basis for $H_1(M_{{n,k}},\partial M_{{n,k}})$ consists of the images of generators $a_i$ of the fundamental group of $M_{n,k}$ for  $1\le i\le n$, and a choice of $k-1$ independent relative cycles. $\Phi_{(a_i,J_l)}$ is only nontrivial on the relative cycles $b_{l-1}$ and $b_{l}$.
 Let us consider the action of  $\Phi$ on $b_{l-1}$. Notice that the action of  $\Phi$ on $b_l$ is similar.  As in Section \ref{sec: construction of diffeomorphisms}, we can take a solid torus neighborhood $N\subseteq\R^3$ of $B_l$ that looks as follows:

\begin{figure}[h]
\centering

\begin{tikzpicture}[scale=0.45, transform shape]
\draw (-1,-2.2)--(-1,-7.8);
\draw (1,-2.2)--(1,-7.8);
 \draw (-2,-0.5)--(-2,-9.5);
\draw (2,-0.5)--(2,-9.5);
\draw[ blue, line width=1.3mm] (-1,-5)--(-2,-5)
(-1.4,-6)node[black, anchor=south, scale=1.5]{$J_l$};
  \draw (0,-0.5) circle (2cm)(1.2,1.4)node[anchor=west, scale=2]{$A_i^{+}$};
  \draw (-2,-0.5) arc (180:360:2 and 0.6);
  \draw (0,-9.5) circle (2cm)(1.2,-11.2)node[anchor=west,scale=2]{$A_i^{-}$};
  \draw (-2,-9.5) arc (180:360:2 and 0.6);
  \draw (0,-5) circle (1cm)(0,-3.5)node[anchor=west, scale=1.5]{$B_l$};
  \draw (-1,-5) arc (180:360:1 and 0.4);
  \draw (-6,-5) circle (1cm)(-6,-3.5)node[anchor=west, scale=1.5]{$B_{l-1}$};
  \draw (-7,-5) arc (180:360:1 and 0.4);

  \draw[ line width=1.3mm] (-2,-5)--(-5,-5);
  
  \fill[fill=black] (-2,-5) circle (7pt);
\end{tikzpicture}

\caption{The interval $J_l$ is shown in blue.}

\end{figure}

\FloatBarrier



Since $\Phi_{(a_i,J_l)}$ has the same local formula as $F_{i,j}$, the computations from Theorem \ref{thm cross F} also show that  $\tsh_{n,k}([\Phi_{(a_i,J_l)}])([b_{l-1}])=0$. Similarly, $\tsh_{n,k}([\Phi_{(a_i,J_l)}])([b_{l}])=0$. Since the action of  $\Phi_{(a_i,J_l)}$   is only nontrivial on the relative cycles $b_{l-1}$ and $b_{l}$,  $\tsh_{n,k}([\Phi_{(a_i,J_l)}]) (\gamma)=0$   for all $\gamma\in H_1(M_{{n,k}},\partial M_{{n,k}})$. Thus, $\tsh_{n,k}([\Phi_{(a_i,J_l)}])$ is trivial.
      
  \end{proof}  
\end{teorema}

We now turn to $M_{\Gamma}$. Let $N$ be the neighborhood associated to $H$ constructed in Section \ref{sec: construction of diffeomorphisms}.
By Lemma \ref{lemma computaion derivative} we can compute the derivative crossed homomorphism of $H$ by computing the derivative crossed homomorphism of $h^{\prime}:N^{\prime}\xrightarrow{}N^{\prime}$ defined by:

$$h^{\prime}(x,y,z)=(x+4\psi(\sqrt{y^2+z^2}),y,z).$$

To simplify notation denote $h^{\prime}$ by $H$ and $N^{\prime}$ by $N$ in the following theorem.

\begin{teorema}
\label{thm crossed H}
      
If $|E_{\ell}(\Gamma)|\ge 2$, then we have $\tsh([H])=1$.
      
\begin{proof}

Let $i \in \Z$ and consider the loop $a_i$. Choose  $\gamma\in[a_i]$  such that ${\gamma}$ intersects $N$ as $(\tau,0, 0)$, for $\tau\in[-\epsilon,\epsilon]$. Recall that for $(x,y,z)$ in $N$ we have that $H^{-1}$ has  the formula: $$H^{-1}(x,y,z)=(x-4\psi(\sqrt{{z}^{2}+y^{2}}),y,z).$$ 

Then we obtain:
$${\mathcal{D}([H])^{-1}}(\gamma(t))=
\begin{bmatrix}

1&-\dfrac{4y}{\sqrt{y^{2}+z^{2}}}\dfrac{d\psi}{d r}(\sqrt{y^{2}+z^{2}}) & -\dfrac{4z}{\sqrt{y^{2}+z^{2}}}\dfrac{d\psi}{dr}(\sqrt{y^2+z^2})\\
0 & 1 & 0\\
0 & 0 &1 \\
\end{bmatrix}
(\tau,0, 0)=$$

$$\begin{bmatrix}

1&0 & 0\\
0 & 1 & 0\\
0 & 0 &1 \\
\end{bmatrix}(\tau,0, 0)= \begin{bmatrix}

1&0 & 0\\
0 & 1 & 0\\
0 & 0 &1 \\
\end{bmatrix}.
$$
 Therefore, $[\mathcal{D}([H])(\gamma)]$ is trivial for every $\gamma$ in $N^{\prime}$. Since $H$ is the identity outside $N^{\prime}$, $\mathfrak{T}([H])[\gamma]$ is trivial in $\pi_{1}(\GLR,\mathrm{Id})$ for every $\gamma\in\pi_1(M_{\Gamma})$.
Therefore, the twisting crossed homomorphism $\mathfrak{T}$ evaluated  at $H$, $\mathfrak{T}([H])$, is trivial.

    \end{proof}

\end{teorema}

Let $N$ be the neighborhood associated to $P_{i,j}$ constructed in Section \ref{sec: construction of diffeomorphisms}.
By Lemma \ref{lemma computaion derivative} we can compute the derivative crossed homomorphism of $P_{i,j}$ by computing the derivative crossed homomorphism of $p_{i,j}^{\prime}:N^{\prime}\xrightarrow{}N^{\prime}$  defined by: 

\[   p_{i,j}^{\prime}(p)=\left\{
\begin{array}{ll} 
   \nu(p) & \text{if } p\in N_i^{\prime}\\
   \nu^{-1}(p) & \text{if } p\in N_j^{\prime}\\
       (\theta+\psi(r)\pi, \varphi, r)& \text{if } p\in N^{\prime} -(N_i^{\prime}\cup N_j^{\prime}) , \\

\end{array} 
\right. \] where $p\in N^{\prime} -(N_i^{\prime}\cup N_j^{\prime})$ has spherical coordinates $(\theta, \varphi,r)$.
To simplify notation denote $p_{i,j}^{\prime}$ by $P$ and $N^{\prime}$ by $N$, $N_i^{\prime}$ by $N_i$, and $N_j^{\prime}$ by $N_j$ in the following theorem.

\begin{teorema}
\label{thm cross Pij}
    If $|E(\Gamma)|<\infty$, then we have $\tsh([P_{i,j}])=1.$

    \begin{proof}
          For $\gamma(t)$ a loop in $N_i^{\prime}$, $P(\gamma(t))$ is the loop $\nu(\gamma(t))$. Since   $P^{-1}=\nu^{-1}$ on $N_i$, and $p \mapsto D_p\nu$ is homotopic to a constant map, we have that $[\mathcal{D}(P^{-1})(\gamma)]$ is trivial for every $\gamma$ in $N_i$. If $\gamma\in N_j$, the same argument shows that $[\mathcal{D}(P^{-1})(\gamma)]$ is trivial. Since $P$ is the identity outside of $N^{\prime}$,  $\mathfrak{T}([P])[\gamma]$ is trivial in $\pi_{1}(\GLR, \mathrm{Id})$ for every $\gamma\in \pi_1(M_{\Gamma})$. Therefore, the twisting crossed homomorphism $\mathfrak{T}$ evaluated  at $P$, $\mathfrak{T}([P])$, is trivial.
    \end{proof}
\end{teorema}

\section{Computation of the group sections $s_{n,k}$ and $s$}\label{sec: section s and snk}

In this section we will prove Theorems \ref{Hatcher Vogtmann split}--\ref{thm section lift of pij}.
\begin{proof}[Proof of Theorem \ref{Hatcher Vogtmann split}]

Since the Hatcher--Vogtmann short exact sequence splits by Corollary \ref{col HV splits}, there exists a section $$s_{n,k}:\map(G_{n,k})\xrightarrow{}\map(M_{n,k}).$$ 
Since the maps $[F_{i,j}], [G_j],$ and $[\Phi_{(a_i,J_l)}]$ lift the maps $[R_{i,j}], [I_{j}]$, and $ [\varphi_{(a_i,J_l)}]$ respectively,   the map $s_{n,k}$ is given by:
$$s_{n,k}([R_{i,j}])=\tsh_{n,k}([F_{i,j}]^{-1})[F_{i,j}],s_{n,k}([I_j])= \tsh_{n,k}([G_j]^{-1})[G_j],$$ \text{ and }  $$s_{n,k}([\varphi_{(a_i,J_l)}])= \tsh_{n,k}([\Phi_{(a_i,J_l)}]^{-1})[\Phi_{(a_i,J_l)}].$$ 
 By Theorems \ref{thm cross F}--\ref{thm cross Phi} we have that $\tsh_{n,k}( [F_{i,j}])=1, \tsh_{n,k}([G_j])=1$ and $\tsh_{n,k}([\Phi_{(a_i,J_l)}])=1.$ Then, $s_{n,k}([R_{i,j}])=[F_{i,j}]$, $s_{n,k}([I_j])=[G_j]$, and $s_{n,k}([\varphi_{(a_i,J_l)}])=[\Phi_{(a_i,J_l)}].$ 
\end{proof}

 Let $\Gamma^{\prime}$ be a thorned subgraph of $\Gamma,$ and
  consider the map $\iota:\map(\Gamma^{\prime})\xrightarrow{}\map_c(\Gamma)$ defined as follows. 
 For $[\phi]\in \map(\Gamma^{\prime})$, $\iota([\phi])$ is defined to be the equivalence class of the map:

  \[   \overline{\phi}(p):=\left\{
\begin{array}{ll}
\phi(p) & \text{if } p\in \Gamma^{\prime}\\
     p &  \text{otherwise}. \\
      
\end{array}
\right. \] Since the homotopy equivalences in $\map(\Gamma^{\prime})$ fix the marked points of $\Gamma^{\prime}$, this map is well defined. 

 Let $M_{n,k}$ be included in $M_\Gamma,$ and consider the map $\theta:\map(M_{n,k})\xrightarrow{}\map_c(M_{\Gamma})$ defined as follows. For $[f]\in \map(M_{n,k})$, $\theta([f])$ is defined to be the equivalence class of the map:

\[   \overline{f}(p):=\left\{
\begin{array}{ll}
 f(p) & \text{if } p\in M_{n,k} \\
     p &   \text{otherwise}. \\
      
\end{array}
\right. \]  Since the diffeomorphisms  in $\map(M_{n,k})$ fix the boundary of $M_{n,k}$, this map is well defined. Notice that $\theta$ is not injective. To see this, we observe that if a boundary sphere $S$ of $M_{n,k}$ separates $M_\Gamma$, then $\theta([T_S])$ is trivial in $\map(M_\Gamma)$. Notice that if $[\phi]\in \map_c(\Gamma)$ and $\Gamma^{\prime}$ is a thorned subgraph containing $\supp (\phi)$, then $\iota([{\phi\Bigr|}_{\Gamma^{\prime}}])=[\phi].$ Similarly, if $[f]\in \map(M_{\Gamma})$ has compact support and $\supp(f)$ is contained in  $M_{n,k}$,  then $\theta([{f\Bigr|}_{M_{n,k}}])=[f].$
We have the following proposition:

\begin{prop}\label{prop lift s mapc}
Let $[\phi]\in \map_c(\Gamma)$ with $\Gamma^{\prime}$ a thorned subgraph containing $\supp(\phi)$ and a homotopy equivalence $\alpha:\Gamma^{\prime}\xrightarrow{} G_{n,k}$. Then $s^{\prime}([\phi]):=\theta\circ s_{n,k}\circ \alpha_{*}([\phi\Bigr|_{\Gamma^{\prime}}])$ is a lift of $[\phi].$ 
\end{prop}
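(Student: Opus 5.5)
The plan is to show that $\Psi(s'([\phi]))=[\phi]$ by establishing a commutative square relating the Hatcher--Vogtmann projection $\rho_{n,k}$ to Udall's projection $\Psi$ under the two extension-by-identity maps. Let $M'\subseteq M_\Gamma$ denote the copy of $M_{n,k}$ that is the doubled handlebody of the thorned subgraph $\Gamma'$. Its boundary spheres correspond to the marked points of $\Gamma'$, which are midpoints of edges of $\Gamma$. The first step is to fix identifications: $\alpha$ induces an isomorphism $\alpha_*:\map(\Gamma')\to\map(G_{n,k})$, and we choose a diffeomorphism $A:M'\to M_{n,k}$, respecting boundary labels, that realizes $\alpha$ on the level of the embedded graphs. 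Conjugation by $A$ then identifies $\map(M')$ with $\map(M_{n,k})$ compatibly with $\rho_{n,k}$. With these identifications we may regard $s_{n,k}\circ\alpha_*$ as a section of $\rho_{\Gamma'}:\map(M')\to\map(\Gamma')$ and $\theta$ as extension by the identity from $M'$.

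The key step is the identity
\[
\Psi\circ\theta=\iota\circ\rho_{\Gamma'}\quad\text{on }\map(M').
\]
To prove it, take $f$ a diffeomorphism of $M'$ fixing $\partial M'$, and let $\overline f$ be its extension by the identity. Recall that $\Psi([\overline f])$ is the proper homotopy class of the graph map obtained by composing the inclusion $\Gamma\hookrightarrow M_\Gamma$, then $\overline f$, then the retraction $M_\Gamma\to\Gamma$. Choose the retraction so that it maps $M'$ onto $\Gamma'$, maps the complement onto $\Gamma\setminus\Gamma'$, and sends each boundary sphere of $M'$ to the corresponding marked point. With this choice, the composite restricts on $\Gamma'$ to a map representing $\rho_{\Gamma'}([f])$, which fixes the marked points, and it is the identity on $\Gamma\setminus\Gamma'$. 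That is exactly $\overline{\phi}$ for a representative $\phi$ of $\rho_{\Gamma'}([f])$, so the identity holds. The point that needs care is that the homotopy on $\Gamma'$ relative to the marked points extends by the constant homotopy, which makes it a proper homotopy on $\Gamma$. This step is the main obstacle, since one must check that the definition of $\rho_{n,k}$ (Hatcher--Vogtmann) and the definition of $\Psi$ (Udall) use compatible retractions. I would arrange this by choosing the retraction of $M_\Gamma$ piecewise, with the piece on $M'$ being the retraction used for $\rho_{n,k}$ transported by $A$.

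Given the identity, the conclusion is a short computation:
\[
\Psi(s'([\phi]))=\Psi\theta\bigl(s_{n,k}\alpha_*([\phi|_{\Gamma'}])\bigr)=\iota\,\rho_{\Gamma'}\bigl(s_{n,k}\alpha_*([\phi|_{\Gamma'}])\bigr)=\iota([\phi|_{\Gamma'}])=[\phi].
\]
The third equality uses that $s_{n,k}$ is a section of $\rho_{n,k}$ (Theorem \ref{Hatcher Vogtmann split}) transported along $\alpha_*$. The last equality is the observation, made before the statement, that $\iota([\phi|_{\Gamma'}])=[\phi]$ when $\supp(\phi)\subseteq\Gamma'$. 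Finally, $s'([\phi])$ is compactly supported, since it lies in the image of $\theta$, and this will be needed later for Theorem \ref{thm limit si}.
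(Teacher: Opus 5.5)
Your proposal is correct and follows the paper's proof: the paper also rests on the commutative square $\Psi\circ\theta=\iota\circ\alpha_*^{-1}\circ\rho_{n,k}$, followed by the same chain of equalities, using that $s_{n,k}$ is a section of $\rho_{n,k}$ and that $\iota([\phi|_{\Gamma'}])=[\phi]$. The only difference is that the paper asserts the commutativity of the square without comment, whereas you justify it with a diffeomorphism $A$ realizing $\alpha$ and a piecewise retraction compatible with both $\rho_{n,k}$ and $\Psi$; this is a welcome extra level of care.
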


\begin{proof}
 Let $\rho_{n,k}^{\prime}=\alpha_{*}^{-1}\circ \rho_{n,k}$. Consider the following commutative diagram:

\[\begin{tikzcd}
\map(\Gamma^{\prime})\arrow{r}{\iota} & \map(\Gamma) \\
\map(M_{n,k})\arrow{u}{\rho_{n,k}^{\prime}}\arrow{r}{\theta}& \map(M_\Gamma)\arrow{u}{\Psi}.\\
\end{tikzcd}\]
 Thus, $\Psi(s^{\prime}([\phi]))=\Psi(\theta(s_{n,k}\circ \alpha_{*}([\phi\Bigr|_{\Gamma^{\prime}}])))=\iota(\alpha_*^{-1}\rho_{n,k}(s_{n,k}\circ \alpha_{*}([\phi\Bigr|_{\Gamma^{\prime}}])))=\iota([\phi\Bigr|_{\Gamma^{\prime}}])=[\phi].$
 It follows that  $s^{\prime}([\phi])$ is a lift of $[\phi].$ 
 
 \end{proof}
 

Let $[\phi]\in \map_c(\Gamma)$ with $\Gamma^{\prime}$ a thorned subgraph containing $\supp(\phi)$  and a homotopy equivalence $\alpha:\Gamma^{\prime}\xrightarrow{}G_{n,k}$. By Proposition \ref{prop lift s mapc}, $s^{\prime}([\phi])$ is a lift of $[\phi]$, so $s([\phi])=\tsh(s^{\prime}([\phi])^{-1})s^{\prime}([\phi])$. We have the following proposition.



\begin{prop}\label{prop s for compactly supp} Let $[\phi]\in \map_c(\Gamma)$ with $\Gamma^{\prime}$ a thorned subgraph containing $\supp(\phi)$ and a homotopy equivalence $\alpha:\Gamma^{\prime}\xrightarrow{} G_{n,k}$. Then: 
$$s([\phi])=s^{\prime}([\phi]).$$
   \begin{proof}

   We have the following commutative diagram:

\[\begin{tikzcd}
\ts(M_{n,k})\arrow{r}{\theta} & \ts(M_{\Gamma})\\
\map(M_{n,k})\arrow{u}{\tsh_{n,k}}\arrow{r}{\theta}& \map(\mg)\arrow{u}{\tsh}.\\
\end{tikzcd}\] 

 It follows from this diagram that for a map $f\in\map(M_{n,k})$ we have  that if $\theta(\tsh_{n,k}([f]))=1$, then  $\tsh(\theta([f]))=1$. Since $\tsh_{n,k}(s_{n,k}\circ \alpha_{*}([\phi\Bigr|_{\Gamma^{\prime}}]))=1$ by  Theorems \ref{thm cross F}--\ref{thm cross Phi}, $\tsh(\theta(s_{n,k}\circ \alpha_{*}([\phi\Bigr|_{\Gamma^{\prime}}])))=\tsh(s^{\prime}([\phi]))=1.$
Thus, $s([\phi])=s^{\prime}([\phi]).$

\end{proof} 
   
\end{prop}


\begin{proof}[Proof of Theorem \ref{thm limit si}]
   Since $\phi_i$ has compact support, we deduce from Proposition \ref{prop s for compactly supp} that  $s([\phi_i])=s_i([\phi_i])$. By \cite[Lemma 5.7]{7}, $\tsh$ is continuous. Since convergent sequences in $\map(\Gamma)$ lift to convergent sequences in $\map(\mg)$ by the topological splitting in [12], $s$ is continuous, and  then we have:

 $$s\left([\lim\limits_{i \to \infty} \phi_i]\right)=\lim\limits_{i \to \infty}s([\phi_i])=\lim\limits_{i \to \infty}s_i([\phi_i]).$$

This finishes the proof of the theorem.

\end{proof}

\begin{proof}[Proof of Theorems \ref{thm section lift of h} and \ref{thm section lift of pij}]
    
We have that on loop shifts and end swaps of ends of the same type $s$ is given by $s([h])=\mathfrak{T}([H]^{-1})[H]$, and $s([p_{i,j}])=\mathfrak{T}([P_{i,j}]^{-1})[P_{i,j}]$ respectively.  
By  Theorems \ref{thm crossed H} and  \ref{thm cross Pij} we have that $\tsh([H])=1\text{ and }\tsh([P_{i,j}])=1$. Then $s([h])=[H]$ and $s([p_{i,j}])=[P_{i,j}]$.  This finishes the proof.

\end{proof}

\bibliographystyle{abbrv}
\bibliography{References}
\end{document}